\numberwithin{equation}{section}
\newtheorem{theo}{Theorem}[section]
\newtheorem{prop}[theo]{Proposition}
\newtheorem{lemma}[theo]{Lemma}
\newtheorem{cor}[theo]{Corollary}
\newtheorem{corollary}[theo]{Corollary}
\theoremstyle{definition}
\newtheorem{defi}[theo]{Definition}
\newtheorem{definition}[theo]{Definition}
\newtheorem{art}[theo]{}
\theoremstyle{remark}
\newtheorem{rem}[theo]{Remark}
\newtheorem{example}[theo]{Example}
\newcommand{\Fcal}{{\mathcal{F}}}
\newcommand{\Gcal}{{\mathcal{G}}}
\newcommand{\Hscr}{{\mathscr{H}}}
\newcommand{\Hcal}{{\mathcal{H}}}
\newcommand{\Lcal}{{\mathscr L}}
\newcommand{\Mcal}{{\mathscr M}}
\newcommand{\Nbb}{{\mathbb{N}}}
\newcommand{\Ncal}{{\mathcal{N}}}
\newcommand{\Ocal}{{\mathcal{O}}}
\newcommand{\Pbb}{{\mathbb{P}}}
\newcommand{\Ucal}{{\mathcal{U}}}
\newcommand{\Vcal}{{\mathcal{V}}}
\newcommand{\Xcal}{{\mathscr X}}
\newcommand{\an}{{\mathrm{an}}}
\newcommand{\N}{\mathbb{N}}
\newcommand{\Q}{\mathbb{Q}}
\newcommand{\R}{\mathbb{R}}
\newcommand{\Z}{\mathbb{Z}}
\newcommand{\mb}{{\mathbf m}}
\renewcommand{\and}{\operatorname{and}}
\DeclareMathOperator{\Coker}{Coker}
\renewcommand{\div}{{\operatorname{div}}}
\DeclareMathOperator{\en}{E}
\DeclareMathOperator{\Env}{{P}}
\DeclareMathOperator{\Ker}{Ker}
\DeclareMathOperator{\Pic}{{Pic}}
\DeclareMathOperator{\Spec}{{Spec}}
\DeclareMathOperator{\supp}{{supp}}
\DeclareMathOperator{\vol}{ {vol}}
\DeclareMathOperator{\metr}{{\|\cdot \|}}
\DeclareMathOperator{\abs}{{|\cdot |}}
\DeclareMathOperator{\KL}{{\mathscr L}}
\DeclareMathOperator{\KM}{{\mathscr M}}
\DeclareMathOperator{\KX}{{{\mathscr X}}}
\DeclareMathOperator{\length}{c}
\newcommand{\Lan}{{L^{\an}}}
\newcommand{\Xan}{{X^{\an}}}
\newcommand{\Ko}{{\ensuremath{K^\circ}}}
\newcommand{\Koo}{{K^{\circ\circ}}}
\newcommand{\Kt}{{\tilde{K}}}
\newcommand{\st}{ \ \big| \ }
\newcommand{\hh}{\hat{h}}
\newcommand{\cO}{\mathcal{O}}
\newcommand{\Uan}{U^{\mathrm{an}}}
\renewcommand{\d}{\delta}
\newcommand{\e}{\varepsilon}
\newcommand{\p}{\psi}
\DeclareMathOperator{\Hom}{Hom}
\newcommand{\redu}{\operatorname{red}}
\renewcommand{\div}{\mathrm{div}}
\title{Non-Archimedean volumes of metrized nef line bundles}
\author{S\'ebastien Boucksom}
\address{Centre de Math\'ematiques Laurent Schwartz, Ecole Polytechnique and CNRS, Institut Polytechnique de Paris}
\email{sebastien.boucksom@polytechnique.edu}
\author{Walter Gubler}
\address{W. Gubler, Mathematik, Universit{\"a}t 
	Regensburg, 93040 Regensburg, Germany} 
\email{walter.gubler@mathematik.uni-regensburg.de}
\author{Florent Martin}
\address{F. Martin, Mathematik, Universit{\"a}t 
	Regensburg, 93040 Regensburg, Germany}
\email{florent.guy.martin@gmail.com}
\begin{document}


\removeabove{0,6cm}
\removebetween{0,6cm}
\removebelow{0,6cm}

\maketitle

\begin{prelims}

\DisplayAbstractInEnglish

\bigskip

\DisplayKeyWords

\medskip

\DisplayMSCclass

\bigskip

\languagesection{Fran\c{c}ais}

\bigskip

\DisplayTitleInFrench

\medskip

\DisplayAbstractInFrench

\end{prelims}


\newpage

\setcounter{tocdepth}{1}

\tableofcontents


\section{Introduction}

The volume is an important invariant in algebraic geometry measuring asymptotically the size of the space of global sections. For a line bundle $L$ on an $n$-dimensional projective variety $Y$ over an algebraically closed field $k$, it is given by 
\begin{equation*}
\label{eq volume intro}
\vol(L) \coloneqq \limsup_{m \to \infty} \frac{h^0(Y,mL)}{ m^n /n!}.
\end{equation*}
Here and in the following, we use additive notation for line bundles and metrics. The volume has many nice properties like continuity and differentiability for which we refer to Lazersfeld's books~\cite{Laz1} and~\cite{Laz2}. In Arakelov geometry, there is a similar invariant called the arithmetic volume. It measures asymptotically the size of the number of small sections. For a projective arithmetic variety $\Xcal$ of relative dimension $n$  over the ring of integers $\Ocal_F$ of a number field $F$ and a  line bundle $\Lcal$ on $\Xcal$ endowed with Hermitian metrics $\phi_v$ for each Archimedean place $v$, it is defined by 
\begin{equation*}
\widehat{\vol}(\overline L)\coloneqq \limsup_{m \to \infty}
\frac{\log \# \{s\in H^0(\Xcal,m\Lcal)\mid  
	\|s\|_{m\phi_v}\le 1,\
	\forall v|\infty\}}{ m^{n+1} /(n+1)!}
\end{equation*}
where $\|s\|_{m\phi_v}$ is the supremum norm of $s$ associated to the metric $m\phi_{v}$. The  arithmetic volume behaves similarly to the classical volume above, see Chen~\cite{chen08:_posit}, Moriwaki~\cite{moriwaki-2009} and Yuan~\cite{Yua08}. In the philosophy of Lang, N\'eron and Weil, arithmetic invariants are always influenced by local invariants depending only on a single place of the number field $F$. 
The main object of study of this paper is a local variant of the arithmetic volume which we study over any non-Archimedean field. For its relation to the global arithmetic volume in case of a number field, we refer to~\cite[Remark 4.1.7]{BGJKM}.

In the following, $K$ is a non-Archimedean field, \textit{i.e.} a field $K$ endowed with a  complete non-Archimedean absolute value $\abs$, assumed to be \textbf{non-trivially valued}, here and throughout the paper. The valuation ring of $K$ is denoted by $\Ko$, and the residue field by $\Kt$.  We consider a line bundle $L$ on an $n$-dimensional reduced proper scheme  $X$ over $K$. Then the Berkovich analytification $\Xan$ is compact and we consider a  metric $\phi$ on $L$ which is continuous with respect to the Berkovich topology. 
We denote the associated {\it supremum norm} on $H^0(X,L)$ by $\metr_\phi$. Using~\cite[\S 2.1]{BE}, we note that $\metr_\phi$ induces a canonical norm $\det(\metr_\phi)$ on the one-dimensional $K$-vector space $\det(H^0(X,L))$. Since a norm on a one-dimensional vector space is unique up to scaling, for any other continuous metric $\psi$ of $L$, we get a well-defined positive number $\det(\metr_\phi)/\det(\metr_\psi)$. Following~\cite[\S 2.3, \S 9.2]{BE}, we define our local  volume by 
\begin{equation} \label{def na volume}
\vol(L,\phi,\psi) \coloneqq \limsup_{m \to  \infty}  \frac {n!}  {m^{n+1}} \cdot
\log \left( \det(\metr_{m\psi})/\det(\metr_{m\phi}) \right).
\end{equation}
We call it here the {\it non-Archimedean volume of $L$ with respect to $(\phi,\psi)$}. In contrast to the global case, it is a relative notion which is only well-defined with respect to a pair of metrics $(\phi,\psi)$. For more details, we refer to \S~\ref{subsection comparison volumes}. Non-Archimedean volumes were first introduced by Kontsevich and Tschinkel in~\cite{kontsevitch-tschinkel} and they proposed their differentiability.  The main result of this paper will show that this is true over any non-Archimedean field. Chen and Maclean~\cite{chen-maclean} studied a variant of a local volume and their results yield that the $\limsup$ in \eqref{def na volume} is in fact a limit if $X$ is geometrically reduced (see~\cite[Theorem 9.8]{BE}).

To describe our first result, we recall from algebraic geometry that the volume of a nef line bundle $L$ on the proper scheme $Y$ over $k$ is the degree $\deg_L(X)$. We are looking for a similar result in case of non-Archimedean volumes of the line bundle $L$ on an $n$-dimensional geometrically reduced proper scheme $X$ over any non-Archimedean field $K$. Let $\phi_1,\phi_2$ be continuous metrics of the line bundle $L$ which are semipositive in the sense of Zhang ~\cite{zhang-95}. We recall semipositivity in \S~\ref{subsection model and metrics} and \S~\ref{subsection MA measures}. 
The analogue of the degree in the relative setting is the {\it energy}
$$ 
\en(L, \phi_1,\phi_2) 
\coloneqq
\frac{1}{n+1} 
\sum_{j=0}^n 
\int_{X^\an}
(\phi_1-\phi_2)
(dd^c \phi_1)^{ j} \wedge (dd^c \phi_2)^{n-j}.
$$
On the right-hand side, we use the Monge--Amp\`ere measures $(dd^c \phi_1)^{ j} \wedge (dd^c \phi_2)^{n-j}$ on the Berkovich space $\Xan$ introduced by Chambert--Loir~\cite{chambert-loir-2006}. We refer to \S~\ref{subsection MA measures} for details about these Radon measures including a proof of  locality principle. The energy was introduced in non-Archimedean geometry in~\cite{BFJ2}, we recall the basic properties in \S~\ref{subsection energy}. 

\begin{theo} \label{theo energy intro} Let $L$ be a line bundle on a reduced proper scheme $X$ over a non-Archimedean 
	field $K$. If $\phi_1, \phi_2$ are two continuous semipositive metrics on $L^{\an}$, 
	then 
	\begin{equation*}
	\vol(L,\phi_1, \phi_2) = \en(L,\phi_1, \phi_2).
	\end{equation*}
\end{theo}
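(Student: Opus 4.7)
The strategy is to reduce the statement to the ample case, already established in \cite{BE}, by an ample perturbation. Fix once and for all an ample line bundle $A$ on $X$ equipped with a continuous semipositive metric $\psi_A$ (such a metric exists: one may take the restriction of a Fubini--Study metric arising from a closed immersion of $X$ into projective space). For each positive rational $\epsilon$, replacing $L$, $A$, and the metrics by a common tensor power whenever needed so that everything remains integral, introduce the ample line bundle $L_\epsilon \coloneqq L + \epsilon A$ and the continuous semipositive metrics $\phi_i^\epsilon \coloneqq \phi_i + \epsilon \psi_A$ for $i=1,2$. The ample case then supplies
\begin{equation*}
\vol(L_\epsilon, \phi_1^\epsilon, \phi_2^\epsilon) \;=\; \en(L_\epsilon, \phi_1^\epsilon, \phi_2^\epsilon).
\end{equation*}

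Passing to the limit $\epsilon \to 0^+$ requires continuity of both sides. The energy side is essentially formal: by multilinearity in the mixed Monge--Amp\`ere measures and continuity of the metrics, the additional contributions involving $\psi_A$ each carry strictly positive powers of $\epsilon$ and so vanish in the limit, yielding $\en(L_\epsilon, \phi_1^\epsilon, \phi_2^\epsilon) \to \en(L, \phi_1, \phi_2)$. The substance of the proof is therefore the continuity of the non-Archimedean volume, that is,
\begin{equation*}
\lim_{\epsilon \to 0^+} \vol(L_\epsilon, \phi_1^\epsilon, \phi_2^\epsilon) \;=\; \vol(L, \phi_1, \phi_2).
\end{equation*}

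Unwinding the definition, this demands a uniform-in-$m$ comparison between the log-determinants of the supremum norms on $H^0(X,mL)$ and on $H^0(X,m(L+\epsilon A))$. The natural route is to use multiplication by a nonzero section of $m\epsilon A$ whose supremum norm is controlled by $\psi_A$ to embed the small-section $\Ko$-lattices coming from $(mL,m\phi_i)$ into those coming from $(mL_\epsilon,m\phi_i^\epsilon)$, and then measure the resulting discrepancy as the content of an explicit finitely presented torsion $\Ko$-module supported in the cokernel. This converts the analytic continuity question into a purely algebraic estimate on contents of such modules.

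The main obstacle is precisely this last step: when $\Ko$ fails to be Noetherian, the familiar length-based arguments break down, and one must deploy the generalised notion of content for finitely presented torsion $\Ko$-modules developed earlier in the paper. The key properties needed are subadditivity of content along short exact sequences, a precise dictionary between this content and the log of the determinant ratio of two supremum norms on a common $K$-vector space, and quantitative bounds that convert the perturbation $\epsilon A$ into an $O(\epsilon)$ error in the volume as $\epsilon \to 0^+$. Once these uniform estimates are in hand, feeding them into the ample identity above and letting $\epsilon \to 0^+$ yields $\vol(L,\phi_1,\phi_2) = \en(L,\phi_1,\phi_2)$, completing the proof.
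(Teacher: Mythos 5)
There is a genuine gap, and it sits exactly where you flag it yourself: the claim
\[
\lim_{\epsilon\to 0^+}\vol(L+\epsilon A,\ \phi_1+\epsilon\psi_A,\ \phi_2+\epsilon\psi_A) \;=\; \vol(L,\phi_1,\phi_2)
\]
is not a consequence of anything you invoke, and the route you sketch for it does not work. The continuity of the non-Archimedean volume established in the paper (Proposition~\ref{basic properties of non-archimedean volume}(e)) is only with respect to the \emph{metrics} on a \emph{fixed} line bundle $L$, where $H^0(X,mL)$ stays the same and only the sup-norms move. In your perturbation the underlying vector spaces change from $H^0(X,mL)$ to $H^0(X,m(L+\epsilon A))$, and the relative volume $\vol(\metr_1,\metr_2)=\log(\det\metr_2/\det\metr_1)$ is a priori a comparison of determinant norms on $\det H^0(X,mL)$; there is no obvious map relating it to the corresponding quantity on $\det H^0(X,m(L+\epsilon A))$. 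Your proposed fix — multiply by a section $s$ of $m\epsilon A$ of sup-norm $\le 1$ to inject $V\coloneqq H^0(X,mL)$ into $W\coloneqq H^0(X,mL_\epsilon)$ and then split off the cokernel — runs into two problems. First, the cokernel $W/V$ is a $K$-vector space of dimension $\sim c\,\epsilon\,m^n$, not a torsion $\Ko$-module, so "the content of the cokernel" is not the right invariant; one has to use the additivity of the relative volume along the exact sequence $0\to V\to W\to W/V\to 0$, and while the contribution of $W/V$ can indeed be crudely bounded by $d(\phi_1,\phi_2)\cdot\dim(W/V)\cdot m = O(\epsilon\,m^{n+1})$, this is only half the estimate. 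Second, and more seriously, the restricted norms $\metr_{m\phi_i^\epsilon}\big|_V$ obtained by pulling back through $t\mapsto t\otimes s^m$ are \emph{not} within $O(\epsilon m)$ of $\metr_{m\phi_i}$: the sup-norm is not multiplicative, so $\sup_x|t(x)|_{m\phi_i}|s(x)|^m_{\epsilon\psi_A}$ can be arbitrarily smaller than $\sup_x|t(x)|_{m\phi_i}$ whenever the supremum of $|t|$ is attained near the zero locus of $s$ (and $s$ must vanish somewhere, $A$ being ample and non-trivial). Without a uniform lower bound $\metr_{m\phi_i^\epsilon}\big|_V\ge e^{-O(\epsilon m)}\metr_{m\phi_i}$ the argument collapses, and no single choice of $s$ provides one. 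In effect, the continuity in $L$ that you want is no easier than the theorem itself — it is, morally, a special case of the differentiability result that is the main target of the paper.

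A secondary issue: the ample base case you cite, \cite[Theorem~A]{BE}, is stated under a smoothness hypothesis on $X$, whereas Theorem~\ref{theo energy intro} is for (geometrically) reduced $X$; so even the input to your $\epsilon$-argument is not available in the stated generality.

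For contrast, the paper's argument never changes the generic line bundle $L$: after reducing to $K$ algebraically closed, $X$ projective, and model metrics on a common integrally closed model $\Xcal$, one writes $\Ocal(D)=\Lcal_1-\Lcal_2$ for the two nef models and filters $H^0(\Xcal,\Mcal+m\Lcal_1)/H^0(\Xcal,\Mcal+m\Lcal_2)$ along the chain $\Mcal+m\Lcal_2,\ \Mcal+m\Lcal_2+\Ocal(D),\ \dots,\ \Mcal+m\Lcal_1$ inside the \emph{fixed} vector space $H^0(X,mL)$. Each graded piece sits inside $H^0(D,\cdot)$ on the vertical torsion scheme $D$, where the genuinely new input — the continuous, homogeneous volume function $\hh^0$ on $\Pic(D)_\R$ from Section~\ref{section volume torsion}, and the resulting extension of the asymptotic Hilbert--Samuel formula (Proposition~\ref{volume formula for real nef}) from ample to nef $\R$-line bundles — is applied. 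The passage from ample to nef thus happens at the level of real Picard classes on a torsion scheme, not at the level of the global non-Archimedean volume, and it is precisely this device that avoids the continuity-in-$L$ statement your sketch needs but does not supply.
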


In the corresponding Archimedean situation and for $L$ (nef and) big, this was shown in~\cite[Theorem~A]{BB10}. For $K$ discretely valued, the theorem was proved in~\cite[Theorem A]{BGJKM}. For $L$ ample and $K$ any non-Archimedean field, this is a result given in~\cite[Theorem A]{BE}. We generalize it here in Theorem~\ref{cor3 energy} to any (nef) line bundle in case of an arbitrary non-Archimedean field $K$.

The main result of our paper is the following differentiability of the non-Archimedean volume.
\begin{theo}  \label{theo differentiability intro}
	Let $L$ be a line bundle on the $n$-dimensional proper, geometrically reduced scheme $X$ over a non-Archimedean field $K$. 
	Let $\phi$ be a continuous semipositive metric on  $L$ and let $f\colon X^{\an} \to \R$ be continuous. 
	Then $\vol(L,\phi+tf,\phi)$ is 
	differentiable at $t=0$ and 
	\begin{equation} \label{equation differentiability}
	\frac{d}{dt}\bigg|_{t=0} \vol(L,\phi+tf,\phi) =
	\int_{X^{\an}} f \, (dd^c \phi)^{n}.
	\end{equation}
\end{theo}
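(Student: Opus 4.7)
The strategy is to exhibit matching one-sided inequalities
\begin{equation*}
t \int_{X^{\an}} f \, (dd^c \phi)^n - o(t) \;\leq\; \vol(L, \phi + tf, \phi) \;\leq\; t \int_{X^{\an}} f \, (dd^c \phi)^n + o(t)
\end{equation*}
as $t \to 0^+$; the symmetric left-sided inequalities then follow by substituting $-f$ for $f$. The central tool is Theorem~\ref{theo energy intro}, which identifies non-Archimedean volume with energy for pairs of continuous semipositive metrics. Since $\phi + tf$ is typically not semipositive, I would sandwich it between continuous semipositive metrics $\psi_t^- \leq \phi + tf \leq \psi_t^+$; together with monotonicity of $\vol(L, \cdot, \phi)$ in the first argument and Theorem~\ref{theo energy intro}, this yields
\begin{equation*}
\en(L, \psi_t^-, \phi) \;\leq\; \vol(L, \phi + tf, \phi) \;\leq\; \en(L, \psi_t^+, \phi),
\end{equation*}
so it remains to choose $\psi_t^\pm$ so that both extremal energies equal $t\int f (dd^c\phi)^n + o(t)$.

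For the upper bound I would perturb by an auxiliary ample line bundle. Fix an ample $A$ on $X$ together with a continuous semipositive metric $\phi_A$ (for instance a model metric coming from an ample model); then $L + \epsilon A$ is ample and $\phi + \epsilon \phi_A$ is a continuous semipositive metric on it for every rational $\epsilon > 0$. The differentiability in the ample case is already known (\cite{BE}), so
\begin{equation*}
\vol(L + \epsilon A, \phi + \epsilon \phi_A + tf, \phi + \epsilon \phi_A) = t \int_{X^{\an}} f \, (dd^c (\phi + \epsilon \phi_A))^n + o_\epsilon(t).
\end{equation*}
The goal is then to establish continuity
$\vol(L + \epsilon A, \phi + \epsilon \phi_A + tf, \phi + \epsilon \phi_A) \to \vol(L, \phi + tf, \phi)$
as $\epsilon \to 0$, uniformly in small $t$; combined with the weak convergence $(dd^c(\phi + \epsilon \phi_A))^n \to (dd^c \phi)^n$ of Monge--Amp\`ere measures (cf.\ \S\ref{subsection MA measures}), this produces the upper bound.

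For the lower bound, the natural candidate for $\psi_t^-$ is the envelope $P_\phi(\phi + tf) \coloneqq \sup\{\psi \leq \phi + tf : \psi \text{ continuous semipositive on } L\}$. Assuming $\psi_t^-$ is itself continuous and semipositive, Theorem~\ref{theo energy intro} gives $\vol(L, \psi_t^-, \phi) = \en(L, \psi_t^-, \phi)$, and the envelope orthogonality relation
\begin{equation*}
\int_{X^{\an}} \bigl(\phi + tf - \psi_t^-\bigr) \, (dd^c \psi_t^-)^n = 0,
\end{equation*}
combined with the derivative formula for the energy in its first variable, yields $\en(L, \psi_t^-, \phi) = t\int f (dd^c \phi)^n + o(t)$.

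The main obstacle is the regularity of the envelope $P_\phi(\phi + tf)$ when $L$ is merely nef: its continuity and semipositivity are not automatic, and over a general non-Archimedean field existing techniques (ampleness, residue characteristic zero, or discreteness of the valuation) do not apply. Overcoming this is precisely the role of the detailed algebraic study of content and volume of finitely presented torsion modules over the possibly non-Noetherian valuation ring $K^\circ$ announced in the abstract; it should supply the substitute for envelope regularity and let the sandwich estimate be run uniformly in $t$ near $0$, bridging the ample perturbation and the nef original.
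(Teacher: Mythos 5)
Your proposal does not capture the paper's argument, and two of its three pillars would fail for reasons internal to the paper.

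First, the lower bound via the envelope $\psi_t^- = \Env(\phi+tf)$ is circular. The orthogonality property you invoke, namely $\int (\phi+tf-\Env(\phi+tf))\,(dd^c\Env(\phi+tf))^n = 0$, is Theorem~\ref{lemma orthogonality}, and its proof in \S\ref{subsection orthogonality} is \emph{derived from} Theorem~\ref{corollary differentiablity yuan's argument} (the very differentiability statement under discussion) together with Proposition~\ref{sup norm and envelope}. It cannot be used as an input. Moreover, the paper explicitly flags (Remark~\ref{psh continuity and semipositive}) that continuity of $\Env(\psi)$ for a continuous $\psi$ is an open conjecture even for semiample $L$ on normal projective varieties, so assuming $\psi_t^-$ is continuous semipositive is a substantial hypothesis, not a technicality. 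Second, the upper bound via ample perturbation requires passing $\vol(L+\epsilon A,\, \phi+\epsilon\phi_A+tf,\, \phi+\epsilon\phi_A) \to \vol(L,\phi+tf,\phi)$ uniformly in $t$ as $\epsilon\to 0^+$. Nothing in the paper establishes continuity of the non-Archimedean volume as the underlying $\Q$-line bundle varies, and this is not innocuous: as $\epsilon\to 0$ through rationals the admissible tensor powers $m$ are constrained to multiples of the denominator of $\epsilon$, so the $\limsup$ in Definition~\ref{defi volumes} involves a genuine interchange of limits that you would need to justify. Finally, your closing paragraph correctly identifies that the torsion-module machinery must play a role but does not say how, and in fact it plays a different role than ``substituting for envelope regularity.''

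The actual argument avoids envelopes and ample perturbations entirely. One first reduces by density of model functions (Proposition~\ref{basic properties of non-archimedean volume} and \ref{model metrics}) to the case where $f$ is a model function, then writes $f = \phi_1 - \phi_2$ as a difference of semipositive model metrics on a common projective model $\Xcal$ integrally closed in $X$, and works with the effective vertical Cartier divisor $D$ with $\phi_D = f$ (when $f \ge 0$). The key inequality, Lemma~\ref{main prop}, comes from the Yuan-type filtration (Lemma~\ref{lemma inductive step}): one filters $H^0(\Xcal, m(\Lcal+\Mcal_1-\Mcal_2))$ by the subquotients along $D$, bounds the content of each graded piece by $h^0(D, \Fcal_{j,m})$, and uses Proposition~\ref{asymptotic length formula for nef} (the asymptotic volume formula on the torsion scheme $D$, extended from ample to nef $\Lcal$ by the continuity results of Section~\ref{section volume torsion}). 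The monotonicity trick $\vol(L,\phi+c,\phi) = c\deg_L(X)$ then removes the sign restriction on $f$, yielding matching $\liminf$/$\limsup$ bounds. It is this last step — the volume formula on nef $\R$-line bundles over the non-Noetherian $\Ko$, obtained by the continuity of Theorem~\ref{thm:asympcohom} and Lemma~\ref{lemma nef increasing} — that is the genuine payoff of the torsion-module study, not a surrogate for envelope regularity.
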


This formula is the non-Archimedean analogue of~\cite[Theorem~B]{BB10}, 
and was proposed 
by  Kontsevich and
Tschinkel~\cite[\S 7.2]{kontsevitch-tschinkel}. Differentiability of arithmetic volumes was proven by Yuan (see~\cite{Yua08} and~\cite[\S 4.4]{chen-lms}). 
In the case of a discretely valued field $K$, Theorem~\ref{theo differentiability intro} was shown in~\cite[Theorem B]{BGJKM}. For an ample line bundle, this was generalized in~\cite[Theorem A]{BGM}
for any non-Archimedean base field $K$. We will deduce from it the more general Theorem~\ref{theo differentiability intro} using the additional tools described below. This will be done in 
Theorem~\ref{corollary differentiablity yuan's argument}.

The proofs of Theorem~\ref{theo energy intro} 
and Theorem~\ref{theo differentiability intro} are similar, but for the latter 
additional problems arise from leaving the nef cone. 
Our arguments were inspired by the techniques for the proofs of the arithmetic 
Hilbert--Samuel theorem by  Abbes and  Bouche 
\cite{abbes-bouche} and  of a general equidistribution result by  Yuan~\cite{Yua08}. Our proofs follow the overall plan in~\cite{BGJKM} for the same statements in the DVR case, but we have to adapt it here at several places to deal with the non-Noetherian situation.

A crucial tool in the proofs of Theorem~\ref{theo energy intro} 
and Theorem~\ref{theo differentiability intro} is the \emph{volume} of a line bundle $L$ over an $n$-dimensional finitely presented projective torsion scheme $Y$ over $\Ko$ which we will introduce in Section~\ref{section volume torsion}. It follows from the direct image theorem (see Ullrich~\cite[Theorem 3.5]{ullrich95}) that $H^q(Y,L)$ is a finitely presented torsion $\Ko$-module $M$ and  we define $h^q(Y,L)\coloneqq\length(M)$. Here, $\length(M)$ is the \emph{content} of a finitely presented torsion $\Ko$-module $M$  which is a generalization of the length to our non-Noetherian situation and  which was already considered by Scholze~\cite{Sch13}, Temkin~\cite{temkin-metr} and in~\cite{BE} (see \S~\ref{lattices and content} for details). We will see in \S~\ref{Hilbert-Samuel theory} that the invariants $h^q(Y,L)$ share many properties of the usual Hilbert--Samuel theory. Influenced by a similar construction in algebraic geometry by K\"uronya~\cite{Kur06}, we define the {\it $q$-th asymptotic cohomological functions} by
$$\hat{h}^q(Y,L) \coloneqq \limsup_{m \to \infty} \frac{h^q(Y,mL)}{m^n/n!}.$$
For $q=0$, we call it the {\it volume of $L$} and we set $\vol(Y,L) \coloneqq\hat{h}^0(Y,L)$. In Section~\ref{section volume torsion}, we show the asymptotic cohomological functions are  continuous and homogeneous of degree $n$.

Another basic ingredient is the following \emph{Hilbert--Samuel type formula}: let $\Xcal$ be a projective flat scheme over $\Ko$ with generic fiber $X \coloneqq \Xcal \otimes_\Ko K$ and let $n \coloneqq \dim(X)$. Let $\Lcal$ be a nef line bundle on $\Xcal$  with associated model metric $\phi_\Lcal$ on $L \coloneqq \Lcal|_X$. Let $D$ be a vertical effective Cartier divisor on $\Xcal$ with associated model {function}  $\phi_D \coloneqq \phi_{\Ocal(D)}$. Then we have
\begin{equation} \label{HS formula for volume}
\vol(D,\Lcal)= \int_\Xan \phi_D \, (dd^c \phi_\Lcal)^n.
\end{equation}
Based on crucial results for the Deligne pairing in~\cite{BE}, this formula was proven in~\cite[Theorem 2.4]{BGM} for $\Lcal$ ample. In Proposition~\ref{volume formula for real nef}, the continuity of the volume on the left-hand side allows us to generalize  \eqref{HS formula for volume} for nef line bundles $\Lcal$. 
Then both Theorem~\ref{theo energy intro} 
and Theorem~\ref{theo differentiability intro} follow from  \eqref{HS formula for volume} by using a variant of Yuan's filtration argument.

In Section~\ref{Section: differentiability of energy}, we give two applications of the above theorems. We suppose that $L$ is a line bundle on a proper geometrically reduced scheme $X$ over $K$. We assume  $\psi$ is a continuous metric on $L$ such that the \emph{semipositive envelope} 
$$\Env(\psi) := \sup \{ \phi \st \phi \text{ is a continuous semipositive metric on $\Lan$ and } \phi \leq \psi\}$$
is a  continuous metric on $L$. This is expected to hold for all normal projective varieties and semiample line bundles (see~\cite[Conjecture 7.31]{BE}). Then we show in Corollary~\ref{lemma volume energy} that 
\begin{equation} \label{energy=volume}
\vol(L,\phi,\psi) = \en(L,\Env(\phi),\Env(\psi))
\end{equation}
where $\psi$ is another continuous metric on $L$ with $\Env(\psi)$ continuous. The formula \eqref{energy=volume} was shown in~\cite[Corollary~9.16]{BE} for $L$ ample. For $\phi$ as above, we will prove in Theorem~\ref{lemma orthogonality} the \emph{orthogonality property}
\begin{equation} \label{orthogonality property-intro}
\int_\Xan (\Env(\phi) - \phi) \,(dd^c\Env(\phi))^n = 0.
\end{equation}
This is a crucial property in the proof of the existence of solutions of non-Archimedean Monge--Amp\`ere equations, see~\cite{BFJ2}.	

We repeat here that we assume in the whole paper that the absolute value of the non-Archimedean field $K$ is non-trivial. The reason is that we want to work with semipositive metrics in the sense of Zhang. Note that in the trivially valued case, every line bundle on a proper variety has only one model metric and hence there is also a unique semipositive metric. To get a rich theory in the trivially valued case, one has to use a different semipositivity notion as for example in \cite{BE} or \cite{BJ18}. Still, our results can also be applied in the trivially valued case using that the Monge--Amp\`ere measures, the energy and the non-Archimedean volume are compatible with base change (see \cite{BE}).

\subsection*{Acknowledgments}

We are  grateful to Jos\'e Burgos Gil, Antoine Ducros, Dennis Eriksson, Philipp Jell, Mattias Jonsson, Klaus K\"unnemann and Joe Rabinoff for many helpful discussions in relation to this work.  We thank the referee for the careful reading and helpful comments.

\bigskip
\subsection*{Notations and conventions}

The set of natural numbers $\N$ includes $0$.  
The rings in this paper are usually commutative and with $1$. If $A$ is such a ring and $a\in A$, then  $(a)$ denotes the ideal of $A$ generated by $a$. If $P$ is an abelian group, then $P_A \coloneqq P \otimes_\Z A$ denotes the  $A$-module obtained by base change.

As a base field, we consider usually a \emph{non-Archimedean field} $K$. This means in the whole paper that $K$ is endowed with a non-Archimedean {\it complete} absolute value $\abs$ which is \emph{non-trivial}. 
We use $\Ko \coloneqq \{\alpha \in K \mid 1 \geq |\alpha|\}$ for the {\it valuation ring}, $\Koo \coloneqq  \{\alpha \in K \mid 1 > |\alpha|\}$ for the maximal ideal in $\Ko$ and $\Kt \coloneqq \Ko/\Koo$ for the {\it residue field}. If $M$ is a finitely presented torsion module over $\Ko$, then we denote by $\length(M)$ the {\it content} of $M$. This is a generalization of the length in our non-Noetherian situation which we will introduce in \S~\ref{lattices and content}.

Let $X$ be a scheme. 
If $\Fcal$ is a coherent sheaf on a scheme $X$ and $D$ is a Cartier divisor on $X$, 
we write $\Fcal(D)$ for $\Fcal \otimes_{\Ocal_X} \Ocal_X(D)$. 
We will use additive 
notation for line bundles. If  $L,M$ are line bundles on $X$, then $L+M$ denotes the tensor product of the line bundles $L$ and $M$.
For $m \in \Z$, we denote the $m$-th tensor power of $L$ by $mL$.

Let $X$ be an $n$-dimensional proper scheme over the field $F$.  For line bundles $L_1, \ldots, L_n$, we denote the  {\it degree} of $X$ with respect to $L_1,\ldots,L_n$ by $\deg_{L_1,\ldots,L_n}(X)$. It is given by the intersection number
$$\deg_{L_1,\ldots,L_n}(X)=\mathrm{c}_1(L_1)\cdots \mathrm{c}_1(L_n)\cdot [X].$$
If all line bundles agree with $L$, then we use $\deg_L(X) \coloneqq \deg_{L_1,\ldots,L_n}(X)$ as a shorthand.  
We set $$h^q(X,L) \coloneqq  \dim H^q(X,L).$$

If $X$ is a proper scheme over the non-Archimedean field $K$, then we denote by $\Xan$ its Berkovich analytification. A continuous metric $\phi$ on $L$ means  that $\phi$ is continuous with respect to the Berkovich topology. Again, we use additive notation for metrics which means that the tensor metric of metrics $\phi$ and $\psi$ is denoted by $\phi + \psi$ (see~\ref{notation metric} for more). The associated norm on fibers of $L$ is denoted by $\abs_\phi$ and  $\metr_\phi$ denotes the supremum seminorm on the space $H^0(X,L)$ of global sections.  Usually, we consider $\metr_\phi$  if $X$ is reduced, and it is then a norm.

\section{Preliminaries}
\label{section preliminaries}

In this section, we fix an arbitrary non-Archimedean field $K$. We collect here some background results on the content of a module, models, Monge--Amp\`ere measures, energy and non-Archimedean volumes.

\subsection{$\Q$-line bundles and  positivity in the real Picard group} \label{Q-line bundles, R-nef, R-ample}
In this subsection, we consider a proper, finitely presented scheme $Y$ over $\Ko$. We will recall and fix notations about line bundles. The \emph{special fiber} of $Y$ is $Y_s \coloneqq Y \otimes_\Ko \Kt$.

A {\it $\Q$-line bundle} on $Y$ is a pair $(M,m)$ with $M$ a line bundle on $Y$ and $m \in \N_{>0}$. The set of morphisms between  $\Q$-line bundles $(M,m)$ and $(N,n)$ is given as the inductive limit $$\varinjlim_{k \in \N_{>0}} \Hom(knM,kmN).$$  The tensor product of line bundles induces a tensor product of $\Q$-line bundles. As usual, we denote the $\Q$-line bundle $(M,m)$ by $L$ and we will say that $mL$ is (induced by) an honest line bundle $M$. 
Note that $L=(M,m)$ induces a canonical element 
$$L \otimes 1 \coloneqq M \otimes \frac{1}{m} \in \Pic(Y)_\Q \coloneqq \Pic(Y) \otimes_\Z \Q.$$ 
We call $L$ \emph{nef} (resp.~\emph{ample}) if $M$ is a nef (resp.~ample) line bundle on $Y$. 
These notions are well-defined for elements of $\Pic(Y)_\Q \coloneqq \Pic(Y) \otimes_\Z \Q$.

\begin{definition} \label{recall real ample and real nef}
	By multilinearity, we extend the intersection pairing to define an intersection number $M\cdot C \in \R$ for any $M \in \Pic(Y)_\R$ and any closed curve $C$ in $Y_s$. Then $M \in \Pic(Y)_\R\coloneqq \Pic(Y) \otimes_\Z \R$ is called \emph{nef} if $M\cdot C \geq 0$ for any closed curve $C$ in $ Y_s$.	
	
	We call $M \in \Pic(Y)_\R$ \emph{ample} if there are {ample}  $L_1, \ldots, L_r \in \Pic(Y)$ and $\lambda_1, \ldots, \lambda_r >0$ with $M=\sum_{i=1}^r \lambda_i L_i$ for some non-zero $r \in \N$.
\end{definition}

The  definition of ample is consistent for $\Q$-line bundles, by the Nakai--Moishezon criterion applied to $Y_s$. 
 Obviously, ample implies nef.

\begin{prop} \label{sum of R-nef and R-ample}
	Let $A \in \Pic(Y)_\R$ be ample and $M \in \Pic(Y)_\R$ be nef. Then $A+M$ is   ample.
\end{prop}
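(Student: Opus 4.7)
The plan is to reduce to a finite-dimensional setting, apply the classical Kleiman theorem on the projective special fiber $Y_{s}$, and transfer the conclusion back to $Y$ via openness of the ample cone.

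First, I would write $A = \sum_{i=1}^{r} \lambda_{i} L_{i}$ with $\lambda_{i} > 0$ and $L_{i} \in \Pic(Y)$ honest ample, and $M = \sum_{j=1}^{s} \alpha_{j} N_{j}$ with $\alpha_{j} \in \R$ and $N_{j} \in \Pic(Y)$. Let $V \subset \Pic(Y)_{\R}$ be the finite-dimensional $\R$-span of $\{L_{i}, N_{j}\}$, so that $A, M, A+M \in V$. Since $L_{1}$ is ample, $Y$ is projective over $\Ko$, and hence $Y_{s}$ is projective over $\Kt$, making it amenable to the classical tools of algebraic geometry over a field.

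The crucial auxiliary step is the \emph{openness} of the set $V_{\mathrm{amp}} := V \cap \{\R\text{-ample elements of } \Pic(Y)_{\R}\}$ inside $V$. Since $L_{1}$ is ample, there exists $c \in \Z_{>0}$ such that $cL_{1} + N_{j}$ and $cL_{1} - N_{j}$ are all honest ample on $Y$. For any small perturbation $\eta = \sum_{j} t_{j} N_{j}$ in $V$, the identity $t_{j} N_{j} = |t_{j}|\bigl(cL_{1} + \sgn(t_{j}) N_{j}\bigr) - c|t_{j}| L_{1}$ yields
\begin{equation*}
A + \eta = \Bigl(\lambda_{1} - c\sum_{j} |t_{j}|\Bigr) L_{1} + \sum_{i \geq 2} \lambda_{i} L_{i} + \sum_{j} |t_{j}|\bigl(cL_{1} + \sgn(t_{j}) N_{j}\bigr),
\end{equation*}
which is a positive $\R$-combination of honest ample line bundles as soon as $c\sum_{j} |t_{j}| < \lambda_{1}$. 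Hence $V_{\mathrm{amp}}$ is open around $A$, and the same argument at any other point of $V_{\mathrm{amp}}$ yields openness globally.

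Next I would invoke Kleiman's theorem on the projective variety $Y_{s}$. Since $A|_{Y_{s}}$ is $\R$-ample (restrictions of $L_{i}$'s are ample) and $M|_{Y_{s}}$ is $\R$-nef by Definition~\ref{recall real ample and real nef}, the classical Kleiman theorem---equivalently, the $\R$-version of the Nakai--Moishezon criterion---shows that $(A + tM)|_{Y_{s}}$ is $\R$-ample on $Y_{s}$ for every $t \in [0,1]$. Combined with the standard equivalence that ampleness of a line bundle on $Y$ proper and finitely presented over $\Ko$ is detected by its restriction to $Y_{s}$---valid over a Noetherian base by EGA and extending to general valuation rings by approximation---one concludes that the path $\sigma(t) := A + tM$ lies in $V_{\mathrm{amp}}$ for all $t \in [0,1]$, and in particular $A + M = \sigma(1)$ is $\R$-ample.

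The main obstacle will be the last transfer, from Kleiman's conclusion on $Y_{s}$ (an $\R$-ample class in $N^{1}(Y_{s})_{\R}$, built from possibly arbitrary ample line bundles on $Y_{s}$) to a decomposition of $A+M$ into a positive $\R$-combination of honest ample line bundles \emph{on} $Y$ in the sense of Definition~\ref{recall real ample and real nef}. This is resolved by a connectedness argument: the set $\{t \in [0,1] : \sigma(t) \in V_{\mathrm{amp}}\}$ contains $0$, is open by openness of $V_{\mathrm{amp}}$, and is closed thanks to the equivalence between $\R$-ampleness on $Y$ and on $Y_{s}$ for elements of $V$; by connectedness it is all of $[0,1]$, giving the claim.
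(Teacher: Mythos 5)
Your openness lemma for $V_{\mathrm{amp}}$ is a clean and correct observation, and the overall strategy of combining openness with a connectedness argument along the segment $t\mapsto A+tM$ is attractive. However, there is a genuine gap in the \emph{closedness} step: you assert that $\{t\in[0,1]:\sigma(t)\in V_{\mathrm{amp}}\}$ is closed ``thanks to the equivalence between $\R$-ampleness on $Y$ and on $Y_s$ for elements of $V$,'' but you have only justified this equivalence for honest line bundles (via EGA/\cite[Corollaire~9.6.5]{GroEGAIV3}); for $\R$-classes it is not available a priori. Recall that, by Definition~\ref{recall real ample and real nef}, $\R$-ample means ``positive $\R$-combination of honest ample line bundles \emph{on $Y$}''. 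Given $t^*$ with $\sigma(t^*)|_{Y_s}$ $\R$-ample, it is precisely the nontrivial content of this proposition (recorded afterwards as Remark~\ref{restriction to special fiber}) that $\sigma(t^*)$ is then $\R$-ample on $Y$ in that sense. You cannot use it as input. Nor does it come for free topologically: $V_{\mathrm{amp}}$ is \emph{open}, and the only closed condition you get from a limit of $\R$-ample classes is nefness, not ampleness.

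The paper avoids this circularity by an explicit rational approximation rather than a connectedness argument. After reducing to $A\in\Pic(Y)$ honest ample and $M=\sum_i\lambda_i H_i$ with $H_i$ ample and $\lambda_i\in\R$, one chooses rationals $\rho_i$ with $\lambda_i<\rho_i<\lambda_i+\delta$. Then $N:=\sum_i\rho_i H_i$ is nef (it is $M$ plus a positive combination of amples), and for $\delta\in\Q_{>0}$ small the class $A-\delta\sum_i H_i$ is ample (openness of the ample cone on $Y_s$ plus the honest/$\Q$-line-bundle equivalence). Hence $A-\delta\sum_i H_i+N$ is ample by the $\Q$-case, and
$A+M=\bigl(A-\delta\textstyle\sum_i H_i+N\bigr)+\sum_i(\lambda_i+\delta-\rho_i)H_i$
is ample because the residual coefficients $\lambda_i+\delta-\rho_i$ are all strictly positive. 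The crucial point is that the approximation is taken from the correct side, so the error is itself a positive combination of amples; this is exactly what your connectedness argument lacks. To repair your proof you would need to supply a proof of the closedness step along these lines, at which point the connectedness framing becomes superfluous.
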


\begin{proof}
	The important observation is that $L \in \Pic(Y)$ is nef (resp.~ample) if and only if the restriction $L|_{Y_s}$ to the special fiber  is nef (resp.~ample). This is by definition in the nef case, and it follows from~\cite[Corollaire~9.6.5]{GroEGAIV3} in the ample case.

	Assume first that $A \in \Pic(Y)$ is ample and $M \in \Pic(Y)$ is nef. Then it is well-known that  $A|_{Y_s}+M|_{Y_s}$  is ample and hence $A+M$ is ample. This implies the claim for $\Q$-line bundles.
	
	To show the first claim in general, it is enough to show that $A+M$ is ample for $A \in \Pic(Y)$ ample and $M \in \Pic(Y)_\R$ nef. The existence of the ample line bundle $A$ yields that $Y$ is projective. We conclude that  any element in $\Pic(Y)$ is the difference of two  ample classes~\cite[Corollaire 4.5.8]{ega2} and hence we may assume that $M=\sum_{i=1}^r \lambda_i H_i$ for $H_i \in \Pic(Y)$ ample and $\lambda_i \in \R$. For $\delta > 0$ sufficiently small, we choose $\rho_i \in \Q$ with $\lambda_i < \rho_i < \lambda_i + \delta$. Since all $H_i$ are ample and $M$ is nef, we deduce that $N=\sum_i \rho_i H_i$ is nef. Since the ample cone on $Y_s$ is open, we can choose $\delta \in \Q_{>0}$ so small that the restriction of $A - \delta \sum_i H_i$ to $Y_s$ is ample. By the observation at the beginning, we get that $A - \delta \sum_i H_i$ is ample and hence $A-\delta \sum_i H_i+N$ is ample by the claim for $\Q$-line bundles which we already have shown. Note that  
	$$A+M=A+\sum_{i=1}^r \lambda_i H_i=A-\delta \sum_{i=1}^r H_i+N + \sum_{i=1}^r(\lambda_i + \delta - \rho_i)H_i$$
	and hence $A+M$ is ample as $\lambda_i + \delta - \rho_i>0$.	
\end{proof}

\begin{rem} \label{R-nef limit of Q-ample}
	Let $Y$ be a finitely presented projective scheme over $\Ko$ and let $M$ be a nef element in $\Pic(Y)_\R$. Then  there is a finite dimensional subspace $W$ of $\Pic(Y)_\Q$ such that $M$ is the limit of a sequence of ample elements in $W$. Indeed, the above proof shows that $M$ is the limit of nef elements  $N_k \in \langle H_1, \dots, H_r \rangle_\Q$ and hence $M$ is the limit of the ample classes $N_k + \frac{1}{k}A$.
\end{rem}

\begin{rem} \label{restriction to special fiber}
	For $M \in \Pic(Y)_\R$, it is clear that $M$ is nef if and only if the restriction of $M$ to the special fiber $Y_s$ is nef. The arguments in the proof of Proposition \ref{sum of R-nef and R-ample} show that $M$ is ample if and only if $M|_{Y_s}$ is ample.	
\end{rem}

\subsection{Lattices and content} \label{lattices and content} We will introduce the content of a finitely presented $\Ko$-module as a generalization of the length in our non-Noetherian situation. At the end, we will extend the content to the virtual quotient of two lattices in the same finite dimensional $K$-vector space.

\begin{art} \label{structure thm of finitely presented modules} By~\cite[Proposition 2.10 (i)]{Sch13}, if $M$ is a finitely presented torsion module over $\Ko$, then there is an integer $m$ and some 
	$a_1,\ldots a_m \in \Koo \setminus \{0\}$
	such that
	$$ M \simeq \Ko/( a_1) \oplus \cdots \oplus \Ko / ( a_m)$$
	and the quantities $m$ and $v(a_i)$ are independent of any choice up to reordering.
\end{art}

\begin{defi} \label{length of torsion module}
	Let $M$ be a  finitely presented torsion $\Ko$-module and use the above decomposition.
	We define as in~\cite[\S 2.4]{BE} the {\it content} of $M$  to be 
	$$\length(M) = \sum_{i=1}^m v(a_i) \in \R_{\geq 0}.$$
	In~\cite{Sch13}, the content $\length(M)$ is called the  length of $M$.
\end{defi}

\begin{rem} \label{usual length}
	Recall from~\cite[Ex 2.19]{BE}) that $\length(M)$ agrees with the usual length in case of a discrete valuation with $v(\pi)=1$ for an uniformizing element $\pi$ of $\Ko$.
\end{rem}

\begin{prop}
	\label{prop exactness of length}
	Let $0 \to M_1 \to \cdots \to    M_m \to 0$ 
	be an exact sequence of finitely presented torsion $\Ko$-modules. Then we have
	$$\sum_i (-1)^i \length(M_i) = 0.$$
\end{prop}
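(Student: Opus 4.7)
My plan is to reduce the long exact sequence to a collection of short exact sequences, and then prove additivity of $\length$ on short exact sequences via the structure theorem of \S 2.2.

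For the reduction, I set $K_i \coloneqq \ker(M_i \to M_{i+1}) = \im(M_{i-1} \to M_i)$, so that there are short exact sequences $0 \to K_i \to M_i \to K_{i+1} \to 0$ (with boundary conventions $K_1 = 0 = K_{m+1}$). Torsion of $K_i$ is immediate since $\Ko$ is a domain and $K_i \subseteq M_i$. The nontrivial point is that $K_i$ is itself finitely presented: this uses that the valuation ring $\Ko$ is coherent, so the category of finitely presented $\Ko$-modules is abelian and in particular closed under taking kernels of maps between its objects. Granting additivity for short exact sequences, a telescoping computation yields
$$\sum_{i=1}^m (-1)^i \length(M_i) = \sum_{i=1}^m (-1)^i\bigl(\length(K_i) + \length(K_{i+1})\bigr) = 0.$$

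For the short exact case $0 \to M' \to M \to M'' \to 0$, the structure theorem of \S 2.2 gives square presentation matrices $A' \in \Ko^{p \times p}$ and $A'' \in \Ko^{q \times q}$ of $M'$ and $M''$ whose diagonal entries are the invariant factors; in particular $\length(M') = v(\det A')$ and $\length(M'') = v(\det A'')$. Lifting generators of $M''$ to $M$ and combining with the images of generators of $M'$ produces $p+q$ generators of $M$. Lifting the relations of $M''$ to $M$ (the obstruction being expressible as a $\Ko$-linear combination of the generators of $M'$) yields a square presentation $A \in \Ko^{(p+q)\times(p+q)}$ of $M$ that is block upper-triangular with $A'$ and $A''$ on the diagonal. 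Hence $\det A = \det A' \cdot \det A''$, and additivity of $v$ gives $\length(M) = \length(M') + \length(M'')$.

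The main obstacle I anticipate is the coherence step, where one must argue — purely from the datum of finite presentations, not finite generation — that each intermediate kernel $K_i$ is finitely presented; this is where the non-Noetherian nature of $\Ko$ makes a real difference compared to the DVR case. A possible alternative, avoiding a general coherence invocation, is to iteratively apply the structure theorem to the pieces of the image filtration of the long exact sequence, trading the coherence argument for more direct bookkeeping with invariant factors within the non-Noetherian framework the paper must handle.
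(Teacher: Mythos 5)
The paper's proof is a one-line citation to~\cite[Proposition~2.10]{Sch13}, which already packages both the structure theorem and the additivity of $\length$ on short exact sequences, so the paper outsources the whole argument. Your proposal is a genuine from-scratch proof, and the overall strategy — splitting the long exact sequence into short ones via the kernels $K_i$, invoking coherence of $\Ko$ (as the paper itself does elsewhere via Ullrich) to keep everything finitely presented, and telescoping — is sound and correctly executed.

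There is, however, a gap in the short exact sequence step. You build a block upper-triangular square presentation matrix $A$ of $M$ and then infer $\length(M)=v(\det A)$. For the diagonal matrices $A'$ and $A''$ this reading of $\length$ off the determinant is immediate from the structure theorem, but $A$ itself is \emph{not} diagonal, so you are implicitly invoking the claim that $\length(\Coker A)=v(\det A)$ for an arbitrary square matrix $A$ over $\Ko$ with torsion cokernel. This is true, but it is precisely the content you would need to prove: it holds because $\Ko$ admits Smith normal forms (in a valuation ring an entry of minimal valuation divides every other entry, so Gaussian-style elimination diagonalizes any matrix by invertible row and column operations, which change neither the cokernel nor $v(\det)$). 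Without this lemma, the determinant computation does not connect to $\length$. A secondary point worth a sentence is that the listed relations — the diagonal relations of $M'$ together with the lifted relations of $M''$ — actually generate \emph{all} relations of $M$; this is a routine diagram chase (project a relation of $M$ to $M''$, absorb via the lifted relations, and note that what remains is a relation of $M'$), but it is what makes $A$ a presentation matrix rather than merely a matrix of some relations. With those two additions your argument closes up.
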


\begin{proof}
	This is  a consequence of~\cite[Proposition 2.10]{Sch13}.
\end{proof}

\begin{art} \label{virtual length}
	A finitely generated $\Ko$-submodule $\mathcal V$ of a finite dimensional $K$-vector space $V$ is called a {\it lattice in $V$} if $\mathcal V$ generates $V$ as a $K$-vector space. 
	If $\mathcal V_1,\mathcal V_2$ are lattices in $V$, then there is a lattice $\mathcal V$ of $V$ contained in $\mathcal V_1 \cap \mathcal V_2$. Note that any finitely generated $\Ko$-submodule $\mathcal V$  of $V$ is contained in a free $\Ko$-submodule of finite rank and hence $\mathcal V$ is finitely presented over $\Ko$ by using that $\Ko$ is a coherent ring~\cite[Proposition 1.6]{ullrich95}.  
	For $i=1,2$, it follows that  $\mathcal V_i/\mathcal V$ is a finitely presented torsion module over $\Ko$ and we define the content of the virtual $\Ko$-module $\mathcal V_1/\mathcal V_2$ as 
	$$\length(\mathcal V_1/\mathcal V_2) \coloneqq \length(\mathcal V_1/\mathcal V)- \length(\mathcal V_2/\mathcal V)$$
	by using the content of finitely presented torsion modules over $\Ko$ from Definition~\ref{length of torsion module}. Additivity of the content shows that this is a well-defined real number which might be negative. 
\end{art}

\subsection{Models and metrics}
\label{subsection model and metrics}
In this subsection, we consider a line bundle $L$  on a proper scheme $X$ over $K$.
We will introduce models of $X$ and $L$ defined over the valuation ring $\Ko$. We will see that a model of $L$ induces a metric on $\Lan$. Such metrics are called model metrics. They were introduced by Zhang~\cite{zhang-95} in Arakelov theory  and they  play a similar role in non-Archimedean geometry as smooth metrics in the Archimedean case. 
\begin{art} \label{notation metric}
	As in~\cite[\S 5]{BE}, we use the logarithmic notation for a metric on a line bundle $L$ on $X$: a {\it metric on $L$} is a function $\phi:\Lan \to \R$ such that $\abs_\phi \coloneqq e^{-\phi}$ induces a norm on the $\Hscr(x)$-vector space $L \otimes_X \Hscr(x)$ for every $x \in \Xan$. 
	Here, $\Hscr(x)$ is the completed residue field of $x$ 
	endowed with its canonical absolute value~\cite[Remark 1.2.2]{berkovich-book}.
	The metric is called {\it continuous} if $\phi$ is continuous with respect to the Berkovich topology.  If $\phi$ is a (continuous) metric on $\Ocal_X$, then we identify $\phi$ with the (continuous) function $-\log |1|_\phi$ on $\Xan$.
\end{art}

\begin{defi} \label{models}
	A {\it model $\Xcal$ of $X$} is a flat, proper scheme $\Xcal$ over $\Ko$ together with an identification of its generic fiber $\Xcal_\eta \coloneqq \Xcal \otimes_\Ko K$ with $X$. There is a canonical {\it reduction map} $\redu_\Xcal:\Xan \to \Xcal_s$ to the special fiber $\Xcal_s$ of $\Xcal$  (see~\cite[Remark 2.3]{gubler-martin} and~\cite[\S 2]{gubler-rabinoff-werner2} for details). On closed points of $X$, the reduction is induced by the valuative criterion of properness, and hence coincides with the usual reduction modulo the maximal ideal of $\Ko$.  
	
	We say that a model $\Xcal$ is \emph{integrally closed in $X$} if for every affine open subset $\Ucal$ of $\Xcal$, the ring $\Ocal(\Ucal)$ is integrally closed in $\Ocal(\Ucal_\eta)$, see~\cite[\S 6.3]{ega2} for the integral closure of a scheme.
	
	A {\it model $(\Xcal,\Lcal)$ of $(X,L)$} is a model $\Xcal$ of $X$ and a line bundle $\Lcal$ on $\Xcal$ together with an identification  $\Lcal|_{\Xcal_\eta} \simeq
	L$ compatible with the identification $\Xcal_\eta \simeq X$. We call $\Lcal$ a {\it model of $L$ determined on $\Xcal$}.
\end{defi}

\begin{rem} \label{dominance}
	We say that the model $\Xcal$  is {\it dominated} by a model $\Xcal'$ of $X$ if the identity on $X$ extends to a (unique) morphism $\Xcal' \to \Xcal$ over $\Ko$. This induces a partial order on the set of isomorphism classes of models of $X$. It is easy to show that the isomorphism classes of models of $X$ form a directed system with respect to this partial order.

	If $X$ is projective, then it follows as in~\cite[Proposition 10.5]{gubler-pisa} that the projective models of $X$ are cofinal among all models of $X$. 
\end{rem}

\begin{rem} \label{lattice induced by model}
	We will frequently use in this paper that if $(\Xcal,\Lcal)$ is a model of $(X,L)$, then $H^0(\Xcal,\Lcal)$ is a lattice in $H^0(X,L)$. Indeed, it follows from the direct image theorem given in~\cite[Theorem 3.5]{ullrich95} that $H^0(\Xcal,\Lcal)$ is a finitely presented $\Ko$-module. Using that $K$ is a flat $\Ko$-module, we deduce that $H^0(\Xcal,\Lcal)$ is a lattice in $H^0(X,L)$.	
\end{rem}

\begin{art} \label{metrics induced by model} 
	Let $(\Xcal,\Lcal)$ be a model of $(X,L)$.  Then there is an associated  metric $\phi_\Lcal$ of $L$ determined as follows:  for $x \in \Xan$, pick an open subset $\Ucal$ of $\Xcal$ which contains $\redu_\Xcal(x)$ and which trivializes $\Lcal$. Then $\Lcal(\Ucal) \cong \Ocal(\Ucal)$ mapping $s$ to $\gamma$ and the metric is given in $x$ by $|s(x)|_{\phi_\Lcal}=|\gamma(x)|$. This does not depend on the choice of the trivialization. It is clear that such metrics are continuous on $\Lan$. We refer to~\cite[5.3]{BE} and~\cite[\S 2]{gubler-martin} for more details.
\end{art}

\begin{art} \label{model metrics}
	A metric $\phi$ on $L$ is called a {\it model metric} if there is a non-zero $k \in \N$ and a model $\Lcal$ of $kL$ such that $k\phi = \phi_\Lcal$. We can say that the model metric $\phi$ is given by the $\Q$-model $\Mcal \coloneqq \frac{1}{k}\Lcal$ of $L$ and we will denote it by $\phi_\Mcal$. 
	A model metric is called {\it semipositive} if $\Mcal$ is nef. Then $L$ is nef by~\cite[4.8]{gubler-martin}. 
	 Note that the model $\Lcal$ and hence $\Mcal$ is not unique, but nefness of $\Mcal$ is independent of the choice. For this and
	more details about model metrics, we refer to~\cite{gubler-martin}. 
	
	If $L=\Ocal_X$, then we identify a model metric $\phi$ with the  {\it model function} $ - \log |1|_\phi$. It follows from~\cite[Theorem 7.12]{gubler-crelle} that model functions form a dense $\Q$-subspace of the space of continuous real functions on $\Xan$ and hence the set of model metrics of $L$ is dense in the space of continuous metrics on $\Lan$ with respect to uniform convergence (see~\cite[Theorem 1.2]{gubler-martin} for a generalization).
\end{art}

\begin{example} \label{vertical Cartier divisors}
	A Cartier divisor $D$ on a model $\Xcal$ of $X$ is called {\it vertical} if $D|_X$ is trivial. Then $\Ocal(D)$ is a model of $\Ocal_X$ and we define the {\it model function associated to $D$} by
	$$\phi_D \coloneqq - \log |1|_{\Ocal(D)}.$$
	
	Conversely, for any model $(\Xcal,\Lcal)$ of $(X,\Ocal_X)$, the trivial section $1$ of $\Ocal_X$ extends uniquely to a meromorphic section $s$ of $\Lcal$ and then $D \coloneqq \div(s)$ is a vertical Cartier divisor on $\Xcal$.
\end{example}

\begin{art} \label{bounded metrics}
	We say that a metric $\phi$ on $L$ is \emph{bounded} if for any open subset $U$ which trivializes $L$ and any trivializing section $s \in H^0(U,L)$, the function $-\log|s|_\phi$ is locally bounded on $\Uan$. Clearly, any continuous metric $\psi$ of $L$ is bounded and hence boundedness of $\phi$ is equivalent to $\phi -\psi$ bounded on $\Xan$, by compactness of the latter. Bounded metrics are stable under tensor product, inverse and pull-back. 
	
	For a bounded metric $\phi$ on $L$, we define the \emph{sup-seminorm}
	$$\|s\|_\phi \coloneqq \sup_{x \in \Xan} |s(x)|_\phi$$
	of $s \in H^0(X,L)$. If $X$ is reduced, it is  a norm on $H^0(X,L)$ (see~\cite[Lemma 4.1]{BE}).
	
	For bounded metrics $\phi_1,\phi_2$ on $L$, we define the \emph{distance} 
	$$d(\phi_1,\phi_2) \coloneqq \sup_{x \in \Xan} |(\phi_1-\phi_2)(x)| < \infty.$$
	Recall that we view $\phi_1-\phi_2$ as the (bounded) function on $X^\an$ given as $-\log|1|_{\phi_1-\phi_2}$. Clearly, $d$ is a metric on the space of bounded metrics of $L$ inducing the topology of uniform convergence.
\end{art}

\begin{art} \label{space of small sections}
	For a bounded metric $\phi$ on $L$, we will use the notation
	$$\widehat{H^0}(X,L, \phi ) \coloneqq \{s \in H^0(X,L) \mid \|s\|_\phi \leq 1 \}.$$	
	If the absolute value on $K$ is discrete and $X$ is reduced, then 	$\widehat{H^0}(X,L, \phi )$ is automatically a lattice  in $H^0(X,L)$, \emph{cf.}~\cite[Lemma 1.29 (ii)]{BE}. In the case of a non-discrete valuation, this is not always true. However, it is  true in case of a model metric $\phi_\Lcal$ associated to a model $(\Xcal,\Lcal)$ of $(X,L)$ with $\Xcal$ integrally closed in $X$. Indeed,  Lemma~\ref{global sections and widehat} below yields
	$$\widehat{H^0}(X,L, \phi_\Lcal )= H^0(\Xcal,\Lcal)$$
	and hence the claim follows from Remark~\ref{lattice induced by model}.
\end{art}

\begin{lemma} \label{global sections and widehat}
	Assume that $\Xcal$ is a model of $X$ which is integrally closed in $X$ and that $\Lcal$ is a model of $L$ determined on $\Xcal$, then
	\label{lemma equality}
	$$H^0(\Xcal, \Lcal) = \widehat{H^0}(X,L, \phi_\Lcal)$$
\end{lemma}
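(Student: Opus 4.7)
The plan is to prove the two inclusions separately. First, I would show $H^0(\Xcal,\Lcal) \subseteq \widehat{H^0}(X,L,\phi_\Lcal)$, which is the formal consequence of the definition of the model metric in \S\ref{metrics induced by model}. Namely, let $s \in H^0(\Xcal,\Lcal)$ and pick $x \in \Xan$. Choose an open neighborhood $\Ucal$ of $\redu_\Xcal(x)$ in $\Xcal$ on which $\Lcal$ is trivialized by a section $e$, and write $s|_\Ucal = \gamma \cdot e$ with $\gamma \in \Ocal(\Ucal)$. By definition, $|s(x)|_{\phi_\Lcal} = |\gamma(x)|$, and since $x$ reduces into $\Ucal_s$, the induced map $\Ocal(\Ucal) \to \Hscr(x)$ factors through $\Hscr(x)^\circ$, so $|\gamma(x)| \le 1$. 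Taking the supremum over $x \in \Xan$ yields $\|s\|_{\phi_\Lcal} \leq 1$.

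For the reverse inclusion $\widehat{H^0}(X,L,\phi_\Lcal) \subseteq H^0(\Xcal,\Lcal)$, the question is local on $\Xcal$, so I fix a trivializing affine open $\Ucal = \Spec(A) \subseteq \Xcal$ (with $A$ a flat, finitely presented $\Ko$-algebra, coherent by Ullrich's result cited earlier) and a trivializing section $e$ of $\Lcal$ over $\Ucal$. Given $s \in \widehat{H^0}(X,L,\phi_\Lcal)$, we may write $s|_{\Ucal_\eta} = \gamma\, e$ for some $\gamma \in A_K = A \otimes_\Ko K$; the condition $\|s\|_{\phi_\Lcal} \le 1$ translates to $|\gamma(x)| \le 1$ for every Berkovich point $x \in \Xan$ with $\redu_\Xcal(x) \in \Ucal_s$. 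The goal is to show $\gamma \in A$, which together with gluing yields $s \in H^0(\Xcal,\Lcal)$.

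The key step is to prove that such a $\gamma$ is integral over $A$. For every Berkovich point $x$ reducing to $\Ucal_s$, the structure map $A \to \Hscr(x)$ lands in $\Hscr(x)^\circ$, and $|\gamma(x)| \le 1$ says exactly that the induced image of $\gamma$ in $\Hscr(x)$ lies in $\Hscr(x)^\circ$. By the valuative characterization of integrality (a Krull-type statement valid for the finitely presented flat $\Ko$-algebra $A$, available in this setting because rank-one valuations on $A_K$ non-negative on $A$ correspond exactly to Berkovich points of $\Ucal_\eta^\an$ reducing to $\Ucal_s$), it follows that $\gamma$ lies in the integral closure of $A$ inside $A_K$. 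Invoking now the hypothesis that $\Xcal$ is integrally closed in $X$, which says $A$ is integrally closed in $A_K$, we conclude $\gamma \in A$, as required.

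I expect the main obstacle to be the valuative argument in the last paragraph: in the non-Noetherian (not necessarily discretely valued) setting, one needs to verify that the set of Berkovich points reducing to $\Ucal_s$ separates all elements of $A_K$ that fail to be integral over $A$, i.e.\ that for every $\gamma \in A_K$ outside the integral closure one can produce a Berkovich point $x \in \Ucal^\an$ with $|\gamma(x)| > 1$. This comes down to a standard construction of valuations via completion with respect to the $\unipar$-adic topology for a nonzero $\unipar \in \Koo$, combined with the identification of $\Ucal^\an$ with the Berkovich spectrum of the affinoid $K$-algebra associated to the $\unipar$-adic completion $\widehat A$; one must be a bit careful here to avoid Noetherian hypotheses, but the relevant machinery was already cited (Ullrich~\cite{ullrich95}) and used earlier in Remark~\ref{lattice induced by model}.
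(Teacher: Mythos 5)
Your first paragraph (the inclusion $\subset$, which the paper dismisses as clear) is fine, and your reduction of the reverse inclusion to the local statement---a $\gamma\in A_K$ with $|\gamma(x)|\le 1$ for all Berkovich points $x$ reducing into $\Ucal_s$ must lie in $A$---is exactly the reduction the paper performs. The difference is how this statement is discharged: the paper cites it from Chen--Moriwaki~\cite[Theorem~2.1]{chen2015} (equivalently \cite[Theorem~4.15]{BE}), namely that for a flat, finitely presented $\Ko$-algebra $A$ integrally closed in $A_K$, $A$ is exactly the unit ball of $A_K$ for the sup-seminorm over $\redu^{-1}(\Ucal_s)$; you instead try to rederive this from a ``Krull-type valuative characterization.''

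That rederivation is the genuine gap. The Krull criterion characterizes the integral closure by quantifying over valuation rings of \emph{arbitrary} rank (and already requires care to formulate when $A_K$ is not a domain), whereas Berkovich points supply only rank-one, real-valued valuations. Your parenthetical claim---that the rank-one valuations arising from $\redu^{-1}(\Ucal_s)$ already detect the integral closure of $A$ in $A_K$---is not a formal rephrasing of Krull's theorem: it \emph{is} the theorem you would need, and it is precisely what \cite[Theorem~2.1]{chen2015} establishes. In the non-Noetherian setting the familiar DVR-based shortcuts are unavailable, so the rank-reduction step cannot be waved off as routine. The hint you offer (build valuations via $\varpi$-adic completion and identify $\Ucal^\an$ with the Berkovich spectrum of the completed affinoid algebra) does not on its face produce, given $\gamma\notin A$, a Berkovich point with $|\gamma|>1$; and Ullrich's coherence and direct-image results, which is what you invoke via Remark~\ref{lattice induced by model}, are of a different nature and do not touch this. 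You rightly flag this step as the main obstacle; the verdict is that it is \emph{the} obstacle, and it must be either proved in detail or cited, not asserted.
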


\begin{proof}
	It is clear that $\subset $ holds in the claim. To prove the converse, we pick $s \in  \widehat{H^0}(X,L,\phi_\Lcal)$. Locally on an affine open subset $\Ucal = \Spec(A)$ which trivializes $\Lcal$, it is given by $\gamma \in A$.
	It follows from~\cite[Theorem~2.1]{chen2015} or~\cite[Theorem~4.15]{BE}  that $A$ integrally closed in $A \otimes_\Ko K$ yields that $A$ is the unit ball of $A \otimes_\Ko K$ with respect to the sup-seminorm on ${\redu}^{-1}(\Ucal_s)$. Now by definition of a model metric and as  $s \in  \widehat{H^0}(X,L,\phi_\Lcal)$, we know that $\gamma$ is in this unit ball and hence in $A$. This proves $s \in \widehat{H^0}(\Xcal,\Lcal)$. 
\end{proof}

\begin{lemma}
	\label{lemma effective}
	Assume that the model $\Xcal$ of $X$ is integrally closed in $X$. Let $D$ be a vertical Cartier divisor of $\Xcal$.
	Then $D$ is effective if and only if $\phi_D \geq 0$.
\end{lemma}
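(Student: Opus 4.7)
My plan is to reduce the statement to Lemma~\ref{global sections and widehat} applied to the model $(\Xcal,\Ocal(D))$ of $(X,\Ocal_X)$. Recall from Example~\ref{vertical Cartier divisors} that associated to the vertical Cartier divisor $D$ we have the canonical meromorphic extension $s$ of the section $1 \in H^0(X,\Ocal_X)$, with $\div(s)=D$. Writing locally $D=\{f_\alpha=0\}$ on an affine open $\Ucal_\alpha$ of $\Xcal$ which trivializes $\Ocal(D)$ via $1/f_\alpha \mapsto 1$, the meromorphic section $s$ corresponds to $f_\alpha \in \Ocal(\Ucal_\alpha)\otimes_\Ko K$, so by definition of the model metric we have $|s(x)|_{\phi_{\Ocal(D)}}=|f_\alpha(x)|$ on $\redu_\Xcal^{-1}(\Ucal_{\alpha,s})$.

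The key observation is that $D$ is effective if and only if each $f_\alpha$ lies in $\Ocal(\Ucal_\alpha)$, which is equivalent to $s \in H^0(\Xcal,\Ocal(D))$. Since $\Xcal$ is assumed integrally closed in $X$, Lemma~\ref{global sections and widehat} (applied to $\Lcal=\Ocal(D)$) gives
\[
H^0(\Xcal,\Ocal(D)) = \widehat{H^0}(X,\Ocal_X,\phi_D),
\]
so $D$ is effective if and only if $\|s\|_{\phi_D}\leq 1$.

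To conclude, I would just unwind the sup-norm: by the formula for $\phi_D$ from Example~\ref{vertical Cartier divisors} we have
\[
\|s\|_{\phi_D} = \sup_{x\in\Xan} |s(x)|_{\phi_{\Ocal(D)}} = \sup_{x\in\Xan} e^{-\phi_D(x)} = e^{-\inf_{\Xan}\phi_D},
\]
so $\|s\|_{\phi_D}\leq 1$ is equivalent to $\phi_D\geq 0$, completing the proof. The only nontrivial ingredient is Lemma~\ref{global sections and widehat}, which itself rests on the characterization of the integral closure in terms of the sup-seminorm on $\redu^{-1}(\Ucal_s)$; the rest of the argument is a direct translation between the algebraic data of $D$ and the analytic function $\phi_D$, and I do not anticipate any technical obstacle beyond carefully checking the identification of $s$ under a local trivialization of $\Ocal(D)$.
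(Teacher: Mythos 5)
Your proof is correct and follows essentially the same route as the paper: reduce to the statement that $D$ is effective if and only if the canonical meromorphic section $s_D$ of $\Ocal(D)$ is a global section, then invoke Lemma~\ref{global sections and widehat} to identify $H^0(\Xcal,\Ocal(D))$ with $\widehat{H^0}(X,\Ocal_X,\phi_D)$, and unwind the sup-norm condition. The paper's version is terser (it does not spell out the local trivialization or the sup-norm computation), but the argument is the same.
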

\begin{proof}
	We note that $D$ is effective if and only if the canonical meromorphic section $s_D$ of $\Ocal(D)$ is a global section. Since $D$ is a vertical Cartier divisor, the restriction of $s_D$ to $X$ is a (nowhere vanishing) global section of $\Ocal_X$ and hence the claim is a special case of Lemma~\ref{lemma equality}.
\end{proof}

\begin{lemma}
	\label{lemma cofinal}
	Assume that $K$ is algebraically closed and that $X$ is reduced (resp.~reduced and projective). 
	Then   models (resp.~projective models) of $X$ which are integrally closed in $X$ are cofinal among models of $X$. 
\end{lemma}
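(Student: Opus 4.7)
The plan is, given any model $\Xcal$ of $X$, to form its normalization $\Xcal^\nu$ in $X$ and to verify that $\Xcal^\nu$ is a model of $X$ dominating $\Xcal$ which is integrally closed in $X$ by construction. In the projective case one first dominates by a projective model via Remark~\ref{dominance} and then normalizes.

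\emph{Construction.} For each affine open $\Ucal = \Spec(A) \subset \Xcal$, let $A^\nu$ denote the integral closure of $A$ inside $A \otimes_\Ko K$, and set $\Ucal^\nu \coloneqq \Spec(A^\nu)$. Functoriality of the integral closure glues these charts into an affine morphism $\mu \colon \Xcal^\nu \to \Xcal$. Since $A \subset A^\nu \subset A\otimes_\Ko K$ and every element of $A\otimes_\Ko K = S^{-1}A$ (for $S = \Ko \setminus \{0\}$) is of the form $a/s$ with $a\in A$ and $s\in S$, we obtain $A^\nu \otimes_\Ko K = A\otimes_\Ko K$, so $\mu_\eta$ is an isomorphism identifying $(\Xcal^\nu)_\eta$ with $X$. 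Flatness of $\Xcal^\nu$ over $\Ko$ is immediate since each $A^\nu$ is a $\Ko$-subring of the $K$-algebra $A\otimes_\Ko K$, hence torsion-free.

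\emph{Finiteness of $\mu$.} The technical heart is to show $\mu$ is finite; granted this, $\Xcal^\nu$ is proper over $\Ko$ (resp.\ projective when $\Xcal$ is), which together with the previous step makes $\Xcal^\nu$ a model of $X$. One uses crucially that $X$ is reduced and of finite type over $K$, and that $K$ is algebraically closed. The assertion reduces locally to showing that, for each finite-type flat $\Ko$-algebra $A$ with reduced generic fiber $A\otimes_\Ko K$, the integral closure $A^\nu$ is a finite $A$-module. Over an algebraically closed non-Archimedean $K$, this is the non-Noetherian analogue of the classical Japanese property, parallel to the standard fact that the unit ball $\Acal^\circ$ of any reduced strictly $K$-affinoid algebra $\Acal$ is module-finite over the image of a Noether normalization $\Ko\langle T_1, \dots, T_d\rangle \hookrightarrow \Acal^\circ$ (the same circle of ideas that underlies the results cited in the proof of Lemma~\ref{lemma equality}).

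\emph{Conclusion and obstacle.} By construction $\Xcal^\nu$ is integrally closed in $X$, and $\mu$ provides the required domination, so the first claim follows. In the projective case, one first dominates the given model by a projective model via Remark~\ref{dominance} and then normalizes; since $\mu$ is finite and $\Xcal$ is now projective, $\Xcal^\nu$ is again projective, giving the second claim. The main obstacle is the finiteness of $\mu$: over a non-Noetherian valuation ring, normalizations of finite-type algebras can fail to be module-finite, and the hypothesis that $K$ is algebraically closed is essential to propagate the Japanese-type property from the generic fiber down to the $\Ko$-level.
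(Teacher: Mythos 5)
The proposal follows the same strategy the paper has in mind: pass to the integral closure (relative normalization) $\mu\colon\Xcal^\nu\to\Xcal$ of a given model $\Xcal$ inside $X$, and in the projective case first dominate by a projective model via Remark~\ref{dominance}. The routine verifications you give are correct: the generic fiber is unchanged because $K$ is flat over $\Ko$, flatness of $\Xcal^\nu$ over $\Ko$ follows from torsion-freeness, and projectivity is preserved since finite morphisms are projective. But the step you yourself single out as ``the main obstacle,'' namely the finiteness of $\mu$, is exactly the point at which the paper invokes a substantive theorem, and in your write-up it remains an unproved assertion.

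Concretely, what you call ``the non-Noetherian analogue of the classical Japanese property'' is not an elementary observation nor a formal analogy: over a general (non-Noetherian) valuation ring, finite-type flat algebras with reduced generic fiber need \emph{not} have module-finite normalization, and establishing that they do under the present hypotheses ($K$ algebraically closed, $X$ geometrically reduced since $K=\bar K$) is the content of the Bosch--L\"utkebohmert--Raynaud reduced fiber theorem. The paper's one-line proof cites precisely its scheme-theoretic formulation, \cite[Theorem~4.20]{BE}; algebraic closedness of $K$ removes the finite ground-field extension which BLR allow, so the integral closure itself becomes a finite morphism (with reduced special fiber, though you only need finiteness here). The comparison you draw with unit balls of reduced strictly $K$-affinoid algebras being finite over Noether normalizations is apt, but it is a sibling corollary of the same theorem rather than an argument that proves it. To close the gap, replace the hand-wave by a citation of \cite[Theorem~4.20]{BE}, which is what the paper does; everything else in your construction then goes through as written.
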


\begin{proof}
	This follows from Remark~\ref{dominance} and the scheme-theoretic version of the reduced fiber theorem of Bosch--L\"utkebohmert--Raynaud given in~\cite[Theorem 4.20]{BE}.
\end{proof}

\subsection{Continuous semipositive metrics and Monge--Amp\`ere measures}
\label{subsection MA measures}

Let $X$ be a proper scheme   over the non-Archimedean field $K$.
Recall from~\ref{model metrics} the definition of a semipositive model metric on a line bundle $L$ over $X$.
To describe canonical metrics  of arithmetic dynamical systems, Zhang~\cite{zhang-95} introduced the following generalization.

\begin{defi} \label{continuous semipositive metrics}
	A continuous metric $\phi$ on $\Lan$ is called {\it semipositive} if $\phi$ is a uniform limit of semipositive model metrics  of $L$ with respect to the distance of uniform convergence from~\ref{bounded metrics}. 	
\end{defi}

If $L$ has a continuous semipositive metric, then  $L$ is nef by~\ref{model metrics}. The converse is not known. 
Note that continuous semipositive metrics are closed under sum, pull-back with respect to morphisms of proper schemes over $K$, and uniform limits. This is easily seen from the fact that the first two properties also hold for model metrics. Recall that we use additive notation for metrics, so sum means the tensor metric.

The following {\it Monge--Amp\`ere measures} were introduced by Chambert-Loir.

\begin{prop} \label{MA measures} There is a unique way to associate to a tuple of continuous  semipositive metrics $\phi_1, \ldots, \phi_n$ on line bundles $L_1,\ldots,L_n$ on a proper scheme $X$ over a non-Archimedean field  $K$ and to an effective $n$-dimensional cycle $Z$ on $X$ a positive Radon measure $\mu$ on $\Xan$, formally denoted by $dd^c \phi_1 \wedge \cdots \wedge dd^c \phi_n\wedge \delta_Z$, such that the following holds:
	\begin{itemize}
		\item[(a)] the measure $\mu$ is multilinear and symmetric in $\phi_1, \ldots, \phi_n$;
		\item[(b)] the measure $\mu$ is additive in $Z$; 
		\item[(c)] if $f:X' \to X$ is a morphism of proper  schemes and if $Z'$ is an $n$-dimensional cycle on $X'$, then we have the projection formula 	
		$$
		f_*\big(dd^c(f^*\phi_1) \wedge \cdots \wedge dd^c(f^*\phi_n) \wedge \delta_{Z'}\big) 
		= dd^c \phi_1 \wedge \cdots \wedge dd^c \phi_n\wedge \delta_{f_*Z'};
		$$
		\item[(d)] $\mu$ depends continuously on the $\phi_i$ with respect to uniform convergence (and weak convergence of Radon measures); 
		\item[(e)] $\mu(\Xan)= \deg_{L_1, \ldots, L_n}(Z)$;
		\item[(f)] the Radon measure $\mu$ is compatible with base change of non-Archimedean fields; 
		\item[(g)] assume that $K$ is algebraically closed, and that $\phi_1, \dots, \phi_n$ are model metrics determined by line bundles $\Lcal_1, \ldots, \Lcal_n$ on a model $\Xcal$ of $X$ with reduced special fiber $\Xcal_s$. Then 
		$$
		dd^c \phi_1 \wedge \cdots \wedge dd^c \phi_n:=dd^c \phi_1 \wedge \cdots \wedge dd^c \phi_n\wedge\d_X= \sum_Y \deg_{\Lcal_1, \ldots, \Lcal_n}(Y) \; \delta_{\xi_Y},
		$$
		where $Y$ ranges over all irreducible components of  $\Xcal_s$ and where $\delta_{\xi_Y}$ is the Dirac measure at the unique point $\xi_Y\in X^\an$ with reduction equal to the generic point of $Y$.
	\end{itemize}	
	Properties (b), (c), (d), (f) and (g) characterize the  positive Radon measures $\mu$ uniquely.
\end{prop}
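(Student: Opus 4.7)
The plan is to build $\mu$ first for semipositive model metrics, where property (g) will serve as the definition, and then extend to arbitrary continuous semipositive metrics by weak continuity. For the existence, one first reduces to $K$ algebraically closed by base change: if $\pi\colon X_{\widehat{K^\alg}}^\an \to X^\an$ denotes the base change map, which is surjective, continuous and proper, one defines $\mu$ downstairs as the pushforward of the measure upstairs, and the $\mathrm{Gal}(K^\alg/K)$-invariance of the upstairs construction guarantees that $\pi_*\mu^{\mathrm{up}}$ descends in a well-defined way. This automatically ensures (f). So we may assume $K$ algebraically closed. After clearing denominators we may assume the $\phi_i$ are honest model metrics; by Lemma~\ref{lemma cofinal} they are determined on a common projective model $\Xcal$ which is integrally closed in $X$, and a further application of the reduced fibre theorem allows us to dominate $\Xcal$ by a model with reduced special fibre. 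On such a model, formula (g) takes over as the definition, and properties (a), (b), (c), (e) are then direct consequences of the multilinearity and additivity of intersection numbers, the projection formula, and the degree formula, all for line bundles on the projective $\Kt$-scheme $\Xcal_s$.

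For a tuple of continuous semipositive metrics $\phi_i$, choose sequences $\phi_i^{(k)}$ of semipositive model metrics with $\phi_i^{(k)} \to \phi_i$ uniformly. The measures $\mu^{(k)} \coloneqq dd^c\phi_1^{(k)}\wedge\cdots\wedge dd^c\phi_n^{(k)}\wedge \delta_Z$ have uniformly bounded mass $\deg_{L_1,\ldots,L_n}(Z)$ by (e), so by Banach--Alaoglu there exist weak limits; the content of (d) is to show that such a limit is independent of the approximating sequence. By multilinearity, the difference $\mu^{(k)}-\mu^{(l)}$ telescopes into a sum of terms of the form $dd^c(\phi_i^{(k)}-\phi_i^{(l)}) \wedge \omega$, where $\omega$ is a Monge--Ampère expression in semipositive model metrics. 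Pairing with a continuous test function $f$ and performing a model-level integration by parts of $dd^c$ against a difference of model functions reduces the contribution to an integral of $\pm(\phi_i^{(k)}-\phi_i^{(l)})$ against a positive Radon measure of bounded mass; uniform convergence of $\phi_i^{(k)}$ forces these to vanish in the limit. This is the main technical obstacle, since in the non-discrete setting one cannot directly invoke a Noetherian intersection calculus and must work with model metrics and their distance function from~\ref{bounded metrics}. Once the weak limit is shown to be well defined, properties (a)--(g) pass automatically from the model case to the general case by uniform approximation, using that both sides of each identity are continuous in the metrics.

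Uniqueness is a formal consequence of the axioms: property (g) pins down $\mu$ in the case of semipositive model metrics on a model with reduced special fibre over an algebraically closed field, property (c) combined with the reduced fibre theorem propagates this to arbitrary semipositive model metrics over an algebraically closed field, property (f) extends it to arbitrary non-Archimedean $K$, and property (d) combined with the density of model metrics recalled in~\ref{model metrics} extends the identification to all continuous semipositive metrics.
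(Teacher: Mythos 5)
The paper does not give a self-contained construction: its proof of this proposition is a short paragraph that cites Chambert--Loir for $K$ with countable dense subfield, \cite[Proposition~3.8]{gubler-compositio} for algebraically closed $K$, notes that the general case follows by base change, and points to the alternative local approach of~\cite{ChaDuc} (as organized in~\cite[\S 8.1]{BE}). Your proposal, by contrast, reconstructs essentially the argument of those references from scratch, which is a legitimate but genuinely different choice of exposition: you define the measure directly by (g) on a reduced-special-fibre model over an algebraically closed $\widehat{K^{\alg}}$, push down, and then extend by weak limits. This buys a self-contained proof at the price of having to carry out the continuity estimate (d) yourself, which is precisely the heavy lifting that the paper delegates to its citations.

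Two places in your sketch deserve tightening. First, in the continuity argument you pair $\mu^{(k)}-\mu^{(l)}$ with a generic continuous test function $f$ and then invoke a model-level integration by parts; but integration by parts in this calculus is only available when the test function is itself a model function. The clean version is: since all $\mu^{(k)}$ have the same total mass $\deg_{L_1,\dots,L_n}(Z)$, it suffices to check convergence of $\int f\,d\mu^{(k)}$ for $f$ in the dense $\Q$-subspace of model functions (\ref{model metrics}), and for such $f$ the telescoping plus the symmetry $\int f\,dd^c\psi\wedge\omega=\int\psi\,dd^c f\wedge\omega$ together with the bound $\bigl|\int\psi\,dd^c f\wedge\omega\bigr|\le\|\psi\|_\infty\cdot\|dd^c f\wedge\omega\|$ gives what you want. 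Second, in the uniqueness step you invoke property (c) to propagate from models with reduced special fibre to arbitrary semipositive model metrics; what is really used there is not the projection formula (which lives on the generic fibre) but the cofinality of Lemma~\ref{lemma cofinal}: any model can be dominated by an integrally closed one, and the model metric is unchanged under such domination. You should also make explicit the reduction of the cycle $Z$ to the case $Z=X$ integral and projective (by (b), (c), and Chow's lemma) before Lemma~\ref{lemma cofinal} can be applied; the statement of the proposition allows arbitrary effective $n$-cycles on an arbitrary proper $X$.
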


Chambert--Loir~\cite{chambert-loir-2006} obtained this in the case of a non-Archimedean field with a countable dense subset. In the case of any  algebraically closed  non-Archimedean field, this follows from~\cite[Proposition 3.8]{gubler-compositio}. The general case is easily deduced by base change. Alternatively, this follows from the local approach in~\cite{ChaDuc}, see~\cite[\S 8.1]{BE} for details. 

\begin{rem} \label{real forms and currents}
	Note that the wedge product and the $dd^c$ in the notation $dd^c \phi_1 \wedge \cdots \wedge dd^c \phi_n\wedge \delta_Z$ are a priori purely formal. It is used to stress the analogy to the complex  Monge--Amp\`ere measures where $dd^c \phi = c_1(L,\phi)$ is the first Chern form. It was shown later in ~\cite[\S 6]{ChaDuc}   and  in~\cite[Theorem 10.5]{gubler-kuennemann}  that the measures could really be understood as a product of $(1,1)$-currents similarly to the complex case.  
\end{rem}

\begin{rem} \label{DSP extension}
	A continuous metric $\phi$ on a line bundle $L$ over $X$ is called a {\it DSP metric} if there are continuous semipositive metrics $\phi_1,\phi_2$ on  line bundles $L_1,L_2$ over $X$ such that $L=L_1-L_2$ and $\phi=\phi_1-\phi_2$. By multilinearity, we can uniquely extend the construction of  $\mu = dd^c \phi_1 \wedge \dots \wedge dd^c \phi_n\wedge \delta_Z$ to DSP metrics $\phi_1,\dots, \phi_n$. The resulting Radon measure is no longer positive, but still satisfies (a)--(g). 
	
	If $X$ is projective, every model metric on a line bundle over $X$ is a DSP metric. Indeed, we have seen in Remark~\ref{dominance} that every model metric is determined on a projective model $\Xcal$, and every line bundle on $\Xcal$ is a difference of two ample line bundles on $\Xcal$~\cite[Corollaire 4.5.8]{ega2}.
\end{rem}

We recall the following result of Yuan and Zhang. 

\begin{prop} \label{negative semidefinite form}
	Let $\phi_2, \dots, \phi_n$ be continuous semipositive metrics of line bundles on the projective scheme $X$ over $K$. Then 
	$$(f,g) \mapsto \int_\Xan f dd^c g 	 \wedge dd^c \phi_2 \wedge \cdots \wedge dd^c\phi_n$$
	defines a negative semidefinite symmetric bilinear form on the $\Q$-vector space of model functions on $X$.	
\end{prop}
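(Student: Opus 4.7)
The plan is to reduce to an intersection-theoretic identity on a common projective model and invoke a Hodge--index-type inequality for vertical divisors on the special fiber.

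First I would reduce to the case of model metrics and a common model. By continuity of Monge--Amp\`ere measures under uniform convergence (property~(d) of Proposition~\ref{MA measures}), I may approximate $\phi_2, \ldots, \phi_n$ by semipositive model metrics. Using the cofinality of projective models (Remark~\ref{dominance}), together with compatibility of the measures with base change (property~(f)) and the cofinality of integrally closed models with reduced special fiber after base change to $\widehat{K^\alg}$ (Lemma~\ref{lemma cofinal}), one may assume that $\phi_i = \phi_{\Lcal_i}$ for nef line bundles $\Lcal_i$ on a common projective model $\Xcal$ of $X$ with reduced special fiber $\Xcal_s$, and that $f = \phi_D$, $g = \phi_E$ for vertical $\Q$-Cartier divisors $D, E$ on $\Xcal$. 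Clearing denominators, I would take $D, E$ integral.

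Property~(g) of Proposition~\ref{MA measures} then expresses the measure $\mu \coloneqq dd^c\phi_E \wedge dd^c\phi_{\Lcal_2} \wedge \cdots \wedge dd^c\phi_{\Lcal_n}$ as a weighted sum of Dirac masses at the generic points $\xi_Y$ of the irreducible components $Y$ of $\Xcal_s$, with weights $\deg_{\Ocal(E), \Lcal_2, \ldots, \Lcal_n}(Y)$. Since $\phi_D(\xi_Y) = \mathrm{mult}_Y(D)$, summing over components gives
\begin{equation*}
\int_{\Xan} \phi_D \, dd^c\phi_E \wedge dd^c\phi_{\Lcal_2} \wedge \cdots \wedge dd^c\phi_{\Lcal_n} \;=\; D \cdot E \cdot \Lcal_2 \cdots \Lcal_n,
\end{equation*}
a classical intersection number on the proper $\Kt$-scheme $\Xcal_s$. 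Symmetry of this intersection number in $D$ and $E$ immediately yields symmetry of the bilinear form.

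For negative semidefiniteness, set $f = g = \phi_D$ and write $\Xcal_s = \sum_i Y_i$ in its irreducible components. Since the $\Lcal_i$ restrict to nef line bundles on $\Xcal_s$, the cross terms $Y_i \cdot Y_j \cdot \Lcal_2 \cdots \Lcal_n \geq 0$ for $i \neq j$ are intersection numbers of a nef class with an effective cycle supported in a lower-dimensional subset. The principal relation forcing $\sum_i Y_i \equiv 0$ (numerically, against vertical classes), valid whenever $\Xcal_s$ is globally cut out by an element of $\Koo$, then gives $Y_i^2 \cdot \Lcal_2 \cdots \Lcal_n \leq 0$, and a diagonal Cauchy--Schwarz argument extends this to $D = \sum a_i Y_i$ arbitrary. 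The main obstacle is exactly this last step in the possibly non-Noetherian setting: when $\Ko$ is not a DVR, the special fiber need not be globally principal. One handles this either by approximating the situation by base change to a discretely valued subfield of $K$ over which $\Xcal$ is defined by finite presentation, or by combining the fact that vertical Cartier divisors are locally cut out by elements of $\Koo$ with a local-to-global patching of the numerical relation; either way, the resulting intersection inequality descends from the DVR case.
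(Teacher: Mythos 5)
Your reduction to model metrics and the translation into intersection numbers on the special fiber of a common projective model are correct and match the paper's plan (the paper likewise notes that ``the form is given as a limit of intersection numbers on models''), and the symmetry argument is fine. The genuine gap is exactly where you flag it, and your proposed patches do not close it. In the non-Noetherian case the special fiber $\Xcal_s$ is not a principal divisor (the maximal ideal $\Koo$ is not finitely generated), so the numerical relation $\sum_i Y_i \equiv 0$ that drives the classical Zariski-type Hodge index argument is simply unavailable, and there is no Cauchy--Schwarz step to salvage. Your first fix, ``base change to a discretely valued subfield of $K$ over which $\Xcal$ is defined by finite presentation,'' is not a workable reduction: descending $\Xcal$ to a finitely generated subring of $\Ko$ changes the valuation ring, the special fiber, and the degree function, so the desired inequality over $\Ko$ does not ``descend from the DVR case'' by any direct specialization. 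Your second fix, ``local-to-global patching of the numerical relation,'' is not an argument; the obstruction to globalizing the local principal generator of $\Koo$ is precisely the failure of $\Ko$ to be a DVR.

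The paper closes this gap differently: it does not try to reprove the inequality by hand, but instead invokes the local Hodge index theorem of Yuan--Zhang~\cite[Theorem~2.1]{yuan-zhang}, which is established over an arbitrary complete non-Archimedean field and directly yields the negative semidefiniteness of the intersection pairing of vertical divisors against a product of nef classes. (The auxiliary citation to~\cite[Propositions~2.20, 2.21]{BFJ2} is for the reduction-to-models bookkeeping you already carried out, not for the inequality itself, since~\cite{BFJ2} works over a DVR of residue characteristic zero.) So what you are missing is the specific reference that makes the final inequality available without Noetherian hypotheses; with that substitution, your outline would be complete.
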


\begin{proof}
	By base change, we may assume that $K$ is algebraically closed. 
	 Note that the form is given as a limit of intersection numbers on models and then the claim from the local Hodge index theorem from~\cite[Theorem {2.1}]{yuan-zhang}. We refer to \cite[Propositions 2.20, 2.21]{BFJ2} for the argument.
\end{proof}

It follows from Remark~\ref{real forms and currents} that the construction of Monge--Amp\`ere measures is local in the Berkovich topology. All results in~\cite[\S 5]{BFJ2} still hold. For convenience of the reader, we state and  prove  the {\it comparison principle}. The complex analogue is due to Bedford and Taylor and, in the discretely valued case of residue characteristic $0$, it is given in~\cite[Corollary 5.3]{BFJ2}. 

\begin{prop} \label{locality principle}
	Let $\phi,\psi$ be continuous semipositive metrics on $\Lan$. Then we have
	$$\int_{\{\phi<\psi\}}	(dd^c \phi)^n \ge \int_{\{\phi<\psi\}}	(dd^c \psi)^n$$
	 where $\{\phi<\psi\} \coloneqq \{x \in \Xan \mid \phi(x)< \psi(x)\}$.
\end{prop}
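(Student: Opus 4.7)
The plan is to adapt the Bedford--Taylor argument to the non-Archimedean setting, closely mirroring the proof of \cite[Corollary 5.3]{BFJ2}. The main inputs are the locality of the Monge--Amp\`ere measure recorded in Remark~\ref{real forms and currents}, the total mass formula from Proposition~\ref{MA measures}(e), and the fact that the pointwise maximum of two continuous semipositive metrics on $L^{\an}$ is again a continuous semipositive metric. With these in hand, the inequality reduces to an elementary mass-balance computation against a suitable regularization.

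For $\varepsilon > 0$ I would introduce the max envelope
\[
\psi_\varepsilon \coloneqq \max(\phi,\, \psi - \varepsilon),
\]
which is a continuous semipositive metric on $L^{\an}$. On the open set $U_\varepsilon \coloneqq \{\phi > \psi - \varepsilon\}$ one has $\psi_\varepsilon \equiv \phi$, and on the open set $V_\varepsilon \coloneqq \{\phi < \psi - \varepsilon\}$ one has $\psi_\varepsilon \equiv \psi - \varepsilon$. Since the Monge--Amp\`ere measure is local in the Berkovich topology and $dd^c$ annihilates constants, this yields
\[
(dd^c\psi_\varepsilon)^n|_{U_\varepsilon} = (dd^c\phi)^n|_{U_\varepsilon}, \qquad (dd^c\psi_\varepsilon)^n|_{V_\varepsilon} = (dd^c\psi)^n|_{V_\varepsilon}.
\]

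By Proposition~\ref{MA measures}(e), each of the three positive Radon measures $(dd^c\phi)^n$, $(dd^c\psi)^n$, $(dd^c\psi_\varepsilon)^n$ has total mass $\deg_L(X)$. Subtracting the integrals over the open set $U_\varepsilon$, where $(dd^c\psi_\varepsilon)^n$ and $(dd^c\phi)^n$ coincide, I obtain
\[
\int_{V_\varepsilon}(dd^c\psi)^n \;=\; \int_{V_\varepsilon}(dd^c\psi_\varepsilon)^n \;\leq\; \int_{X^{\an}\setminus U_\varepsilon}(dd^c\psi_\varepsilon)^n \;=\; \int_{X^{\an}\setminus U_\varepsilon}(dd^c\phi)^n.
\]
Letting $\varepsilon \downarrow 0$, both $V_\varepsilon = \{\psi-\phi > \varepsilon\}$ and $X^{\an}\setminus U_\varepsilon = \{\psi-\phi \geq \varepsilon\}$ increase monotonically to $\{\phi < \psi\}$, so monotone convergence delivers the claimed inequality.

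The main obstacle is the opening step: justifying that $\psi_\varepsilon = \max(\phi, \psi - \varepsilon)$ is continuous semipositive in the generality of the paper (arbitrary non-Archimedean $K$, possibly of unequal residue characteristic, without smoothness hypotheses on $X$). By Definition~\ref{continuous semipositive metrics} it suffices to treat semipositive model metrics, where the max should again be semipositive after passing to a common dominating model; after base change to an algebraically closed $K$ (using compatibility with base change from Proposition~\ref{MA measures}(f)), Lemma~\ref{lemma cofinal} provides integrally closed models on which property~(g) of Proposition~\ref{MA measures} pins down the measures involved, and one then extends to the continuous case by taking uniform limits.
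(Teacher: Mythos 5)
Your core argument is essentially identical to the paper's proof: introduce $\psi_\varepsilon = \max(\phi,\, \psi - \varepsilon)$, invoke locality of the Monge--Amp\`ere measure on the open sets $\{\phi > \psi - \varepsilon\}$ and $\{\phi < \psi - \varepsilon\}$, compare total masses via Proposition~\ref{MA measures}(e), and conclude by monotone convergence as $\varepsilon \downarrow 0$. Your mass-balance step is in fact slightly tidier than the inequality manipulation the paper uses, but it is the same idea.

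The one place you have a genuine gap is the closing paragraph, where you sketch why $\psi_\varepsilon = \max(\phi,\, \psi - \varepsilon)$ is a continuous semipositive metric. You claim that for semipositive model metrics ``the max should again be semipositive after passing to a common dominating model,'' but this is not so: if $\phi_{\Lcal_1}$ and $\phi_{\Lcal_2}$ are model metrics of $L$ associated to nef line bundles $\Lcal_1, \Lcal_2$ on a common model $\Xcal$, the pointwise maximum $\max(\phi_{\Lcal_1}, \phi_{\Lcal_2})$ is generally not a model metric on $\Xcal$, and exhibiting a nef model of $L$ on some dominating model that induces it (or a sequence of such converging uniformly to it) is a nontrivial fact. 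This is precisely the content of \cite[Propositions 3.11 and 3.12]{gubler-martin}, which the paper cites at this very point; the proof there runs through a concrete analysis on models and is not the base-change reduction you outline. Replacing your sketched reduction with a citation to that result closes the gap, after which the rest of your argument is complete and correct.
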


\begin{proof} 
	We will follow closely the arguments from the complex case given in~\cite[Corollary 2.4]{BB10}. Let $\varepsilon >0$, then $\psi_\varepsilon \coloneqq \max(\phi, \psi-\epsilon)$ is a continuous semipositive metric by~\cite[Propositions 3.11, 3.12]{gubler-martin}. By Proposition~\ref{MA measures}(e), we have 
	\begin{equation} \label{total mass identity}
	\int_{\Xan}	(dd^c \phi)^n= \deg_L(X)=\int_{\Xan} (dd^c \psi_\varepsilon)^n .
	\end{equation}
	Note that $\psi_\varepsilon = \phi$ on the open subset $\{\phi > \psi - \varepsilon\}$ of $\Xan$ and that $\psi_\varepsilon = \psi - \varepsilon$ on the open subset $\{\phi < \psi - \varepsilon\}$ of $\Xan$. Since these open subsets are disjoint and since formation of the Monge--Amp\`ere measure is local in the Berkovich topology  (\textit{i.e.} compatible with  restriction to open subsets), we get 
	\begin{equation} \label{application of MA local}
	\int_{\Xan} (dd^c \psi_\varepsilon)^n \geq \int_{\{\phi > \psi - \varepsilon\}} (dd^c \phi)^n  + \int_{\{\phi < \psi - \varepsilon\}} (dd^c \psi)^n.
	\end{equation}
	The right-hand side is bounded below by
	\begin{equation} \label{lower bound of RHS}
	\int_{\Xan} (dd^c \phi)^n -\int_{\{\phi > \psi\}} (dd^c \phi)^n + \int_{\{\phi < \psi - \varepsilon\}} (dd^c \psi)^n.
	\end{equation}
	Combining \eqref{total mass identity}, \eqref{application of MA local} and  \eqref{lower bound of RHS} and using  monotone convergence for $\varepsilon \to 0$, we get the claim. 
\end{proof}

\subsection{Energy}
\label{subsection energy}

We recall here the definition of the energy relative to two semipositive continuous metrics on a line bundle. We will see that all relevant properties of the energy from~\cite[\S 6]{BFJ2} hold over any non-Archimedean field $K$. 

In this subsection, we consider a line bundle $L$ on a proper scheme $X$ over $K$ of dimension $n$.  

\begin{defi}
	\label{recall energy}
	The {\it energy} of two {continuous semipositive} metrics $\phi_1,\phi_2$ of $L$ 
	is  
	$$ 
	\en(L, \phi_1,\phi_2) 
	\coloneqq
	\frac{1}{n+1} 
	\sum_{j=0}^n 
	\int_{X^\an}
	(\phi_1-\phi_2)
	(dd^c \phi_1)^{ j} \wedge (dd^c \phi_2)^{n-j} \in \R.
	$$
\end{defi}

Note that in~\cite[\S 3.8]{BJ18}, the energy was normalized by dividing through $\deg_L(X)$ which makes perfect sense in the case of an ample line bundle. Here, we will be also interested in nef line bundles and so we omit this normalization.

\begin{prop} \label{properties of energy}
	Let $\phi_1, \phi_2, \phi_1', \phi_2', \phi_3$ be continuous semipositive metrics of $L$. Let $d$ be the distance on the space of bounded metrics of $L$ introduced in~\ref{bounded metrics}. Then the following holds.
	\begin{itemize}
		\item[(a)] $\en(L,\phi_2,\phi_1)=-\en(L,\phi_1,\phi_2)$ and $\en(L,\phi_1,\phi_1)=0$. 
		\item[(b)] If $\phi_1 \leq \phi_2$, then $\en(L,\phi_1,\phi_3) \leq \en(L,\phi_2, \phi_3)$. 
		\item[(c)] The cocycle rule $\en(L,\phi_1,\phi_2)+\en(L,\phi_2,\phi_3)+\en(L,\phi_3,\phi_1)= 0$ holds.
		\item[(d)] For $a \in \N$, we have the homogenity 
		\label{equation homogeneity volume}
		$\en(a L , a\phi_1, a\phi_2)
		=
		a^{n+1} \en(L, \phi_1,\phi_2)$. 
		\item[(e)] $\en(L,\phi_1+c,\phi_2)= \en(L,\phi_1,\phi_2)+ c \deg_L(X)$ for  $c \in \R$.
		\item[(f)] $\en(L,\phi_1,\phi_2)$ is concave in $\phi_1$. 
		\item[(g)] The function $t \mapsto \en(L,(1-t)\phi_1+t\phi_2,\phi_3)$ is a polynomial in $t\in [0,1] $ of degree  $\leq n+1$.
		\item[(h)] $\frac{d}{dt}|_{t=0} \en(L,(1-t)\phi_1+t\phi_2,\phi_1)= \int_\Xan (\phi_2-\phi_1) \, (dd^c \phi_1)^n$.
		\item[(i)] $\frac{d^2}{dt^2}|_{t=0} \en(L,(1-t)\phi_1+t\phi_2,\phi_1)= n \int_\Xan (\phi_2-\phi_1) \,  dd^c(\phi_2-\phi_1)\wedge (dd^c \phi_1)^{n-1}$. 
		\item[(j)] $\int_\Xan (\phi_1-\phi_2) \, (dd^c \phi_1)^n \leq E(L,\phi_1,\phi_2) \leq \int_\Xan (\phi_1-\phi_2) \, (dd^c \phi_2)^n$.
		\item[(k)] $|\en(L,\phi_1,\phi_2) - \en(L,\phi_1',\phi_2')| \leq  \left(d(\phi_1,\phi_1')+d(\phi_2,\phi_2')\right) \deg_L(X)$.
		\item[(l)] The energy is compatible with base extensions of non-Archimedean fields.
		\item[(m)] If $f:X' \to X$ is a birational proper morphism, then $\en(f^*L,f^*\phi_1,f^*\phi_2)= \en(L,\phi_1,\phi_2)$.
	\end{itemize}
\end{prop}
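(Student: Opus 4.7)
The plan is to verify the thirteen properties by combining the definition of $E$ with the properties (a)--(g) of the Monge--Amp\`ere measures from Proposition~\ref{MA measures}, together with the negative semidefinite pairing from Proposition~\ref{negative semidefinite form}. Several items are direct unpackings of the definition. For (a), reindex $j \mapsto n-j$ in the sum and note that $\phi_1 - \phi_2$ picks up a sign. For (d), pull out the scalar $a$ from each $(dd^c(a\phi_i))^{\bullet}$ and from the prefactor $\phi_1 - \phi_2$, collecting $a^{n+1}$. For (e), use $dd^c(\phi_1 + c) = dd^c \phi_1$ so only the prefactor changes by $c$; then total mass identity~\ref{MA measures}(e) gives $c \deg_L(X)$ on each summand, averaging to $c \deg_L(X)$. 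For (g), after developing $dd^c((1-t)\phi_1 + t\phi_2) = (1-t)dd^c\phi_1 + t\, dd^c\phi_2$ via the multilinearity~\ref{MA measures}(a), the integrand is a polynomial in $t$ of total degree at most $n+1$. Base change (l) and birational invariance (m) are immediate from properties~\ref{MA measures}(f) and~\ref{MA measures}(c) respectively.

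The core input is the \emph{symmetry / integration-by-parts identity}
\[
\int_\Xan f\, dd^c g \wedge T = \int_\Xan g\, dd^c f \wedge T
\]
for model functions $f,g$ and any wedge product $T$ of $dd^c$'s of continuous semipositive metrics. For model metrics on a projective model $\Xcal$ this is the symmetry of intersection numbers, and we extend to semipositive continuous metrics by approximation and~\ref{MA measures}(d). Granting this, the cocycle rule (c) is obtained by expanding $E(\phi_1,\phi_2) + E(\phi_2,\phi_3) + E(\phi_3,\phi_1)$ and telescoping the differences $(dd^c \phi_i)^j \wedge (dd^c \phi_k)^{n-j} - (dd^c\phi_k)^n$ via the identity $dd^c(\phi_i - \phi_k)$. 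The comparison (j) follows in the same way: write $E(L,\phi_1,\phi_2) - \int_\Xan (\phi_1 - \phi_2)(dd^c \phi_2)^n$ as a sum of terms of the form $\int_\Xan (\phi_1-\phi_2) \, dd^c(\phi_1-\phi_2) \wedge S$ with $S$ a mixed product of $dd^c\phi_1$ and $dd^c\phi_2$; each such term is nonpositive by Proposition~\ref{negative semidefinite form} (applied to $\phi_1-\phi_2$, extended from model functions by uniform approximation). The other inequality in (j) is the same computation from the other end.

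The derivative formulas (h) and (i) are now computations using (g): differentiating the polynomial $t \mapsto E(L,(1-t)\phi_1+t\phi_2,\phi_3)$ and collecting terms via the symmetry identity yields the stated expressions. Concavity (f) is then a direct consequence of (i) together with Proposition~\ref{negative semidefinite form}, which shows the second derivative is nonpositive. Monotonicity (b) follows by combining (c) with the lower bound in (j): if $\phi_1 \leq \phi_2$ then $E(\phi_2,\phi_3) - E(\phi_1,\phi_3) = E(\phi_2,\phi_1) \geq \int_\Xan (\phi_2-\phi_1)\,(dd^c\phi_1)^n \geq 0$. The Lipschitz estimate (k) follows from (j) via the triangle inequality: $|E(\phi_1,\phi_2) - E(\phi_1',\phi_2)| \leq d(\phi_1,\phi_1') \deg_L(X)$ using the total mass identity, and similarly for the second variable.

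The main obstacle is justifying the integration-by-parts identity for continuous semipositive metrics that are not model metrics, since in the non-discrete, non-Noetherian setting one cannot appeal to standard intersection theory on a chosen model. The argument is to reduce to model metrics by Definition~\ref{continuous semipositive metrics}, where the identity becomes a symmetry of intersection numbers, and then pass to the uniform limit using the continuity~\ref{MA measures}(d) of the Monge--Amp\`ere measure (noting that $f-g$ and $dd^c(f-g)$ behave well under uniform limits when $f,g$ are themselves limits of model functions). This is precisely where the generality afforded by Proposition~\ref{MA measures} and the compatibility with base change~\ref{MA measures}(f) allows us to reproduce the arguments of~\cite[\S 6]{BFJ2} without the hypothesis of a discrete valuation.
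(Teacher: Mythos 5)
Your proof is correct and takes essentially the same route the paper outlines by citation to \cite[\S 6]{BFJ2}, \cite[\S 3.8]{BJ18} and \cite[Proposition 9.14]{BE}: formal manipulation of the definition using the properties of Monge--Amp\`ere measures from Proposition~\ref{MA measures}, with the Hodge index estimate of Proposition~\ref{negative semidefinite form} (extended from model functions to continuous semipositive metrics by uniform approximation and~\ref{MA measures}(d)) supplying (f) and (j). One harmless slip in (b): the lower bound from (j) applied to $\en(L,\phi_2,\phi_1)$ is $\int_\Xan (\phi_2-\phi_1)\,(dd^c\phi_2)^n$, not $\int_\Xan (\phi_2-\phi_1)\,(dd^c\phi_1)^n$, but since $\phi_2-\phi_1\ge 0$ and the Monge--Amp\`ere measure is positive both quantities are nonnegative, so the conclusion stands.
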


\begin{proof}
	All these properties follow rather formally from the definition of the energy and the properties of the Monge--Amp\`ere measures given in the previous subsection. For the arguments, we refer to~\cite[\S 6]{BFJ2},~\cite[\S 3.8]{BJ18} and~\cite[Proposition 9.14]{BE}. Note that for (f) and (j), we need the Hodge index result of Yuan and Zhang recalled in Proposition~\ref{negative semidefinite form}.
\end{proof}

\subsection{Non-Archimedean volumes}
\label{subsection comparison volumes}

In this subsection, we denote by $X$ a reduced proper scheme over $K$.
We consider a line bundle $L$ on $X$ and set $N_m \coloneqq h^0(X,mL)$ for any $m \in \N$.

\begin{art} \label{relative volume}
	Let $V$ be an $N$-dimensional $K$-vector space. Then a norm $\metr$ on $V$ induces a {\it determinant norm} $\det(\metr)$ on the determinant line $\det(V)= \Lambda^N(V)$ by 
	$$\det(\| \tau \|)  \coloneqq \inf_{\tau = v_1 \wedge \dots \wedge v_N} \|v_1\| \cdots \|v_N\|$$
	for any $\tau \in \det(V)$ (see~\cite[\S 2.1]{BE} for details). 	We define the {\it relative volume} of norms $\metr_1, \metr_2$ on $V$ by 
	$$\vol(\metr_1,\metr_2) \coloneqq \log  \left( \frac{\det(\metr_2)}{\det(\metr_1)} \right).$$
	Note here that $\det(\metr_2) / \det(\metr_1)$ is a well-defined positive number since $\det(V)$ is a one-dimensional $K$-vector space. For more details on relative volumes, we refer to~\cite[\S 2.3]{BE}.	
\end{art}

\begin{rem} \label{relative volume and length}
	Recall from~\cite[\S 1.7]{BE} that a lattice $\Vcal$ in $V$ has an {\it associated lattice norm} $\metr_\Vcal$ on $V$ given by 
	$$\| v \|_\Vcal \coloneqq \inf_{\alpha \in K, \,  v \in \alpha \Vcal}  |\alpha|$$
	for any $v \in V$. 
	It follows from ~\ref{structure thm of finitely presented modules} that a finitely presented torsion  $\Ko$-module $M$ is given by $M=\Vcal_1/\Vcal_2$ for two lattices $\Vcal_1 \supset \Vcal_2$ of a finite dimensional $K$-vector space $V$. Conversely, any such quotient is evidently a finitely presented torsion $\Ko$-module. Then~\cite[Lemma 2.20]{BE} yields
	$$\length(M)= \vol(\metr_{\Vcal_1},\metr_{\Vcal_2}).$$
	For any lattices $\Vcal_1,\Vcal_2$ in $V$, additivity of the relative volume~\cite[Proposition 2.14(i)]{BE} yields
	$$\length(\Vcal_1/\Vcal_2)= \vol(\metr_{\Vcal_1},\metr_{\Vcal_2})$$
	where the content of the virtual $\Ko$-module $\Vcal_1/\Vcal_2$ was defined in~\ref{virtual length}.
\end{rem}

\begin{defi} \label{defi volumes}
	Let $\phi, \psi$ be bounded metrics on $\Lan$. Then we define the {\it non-Archimedean volume of $L$ with respect to $\phi$ and $\psi$} by 
	\begin{equation} \label{non-Archimedean volume} 
	\vol(L,\phi, \psi) \coloneqq \limsup_{m \to \infty} \frac{n!}{m^{n+1}} \vol ( \| \cdot \|_{m\phi}, \| \cdot \|_{m\psi} ) = \vol(L) \cdot \limsup_{m\to \infty} \frac{1}{mN_m} \vol ( \|\cdot  \|_{m\phi}, \| \cdot \|_{m\psi} ).
	\end{equation}
	using the relative volume of the sup-norms  $\| \cdot \|_{m\phi}, \| \cdot \|_{m\psi}$ on $H^0(X,mL)$ from~\ref{bounded metrics}. 
	
\end{defi}

It follows from homogeneity and monotonicity of the relative volume of norms in \cite[Proposition 2.14]{BE} that both limsup's are finite. By the right-hand equality, non-Archimedean volumes are thus interesting only when the line bundle $L$ is \emph{big}. 

We will see in Theorem~\ref{existence of the limit} that the limsup in~\eqref{non-Archimedean volume} is a limit if $X$ is geometrically reduced. 

The non-Archimedean volume has the following basic properties holding for any proper reduced $X$. Recall that $d$ denotes the distance on the space of bounded metrics of a given line bundle (see~\ref{bounded metrics}).

\begin{prop} \label{basic properties of non-archimedean volume}
	Let $\phi,\psi,\phi', \psi'$ be bounded  metrics on $\Lan$. Then we have: 
	\begin{itemize}
		\item[(a)] $\vol(L,\phi,\phi)=0$. 
		\item[(b)] The non-Archimedean volume $\vol(L,\phi,\psi)$ is increasing in $\phi$ and decreasing in $\psi$.
		\item[(c)] For any $c \in \R$, we have $\vol(L,\phi+c,\psi)=   \vol(L,\phi,\psi)+ c\vol(L)$.
		\item[(d)] $\vol(L,\phi,\psi) \leq d(\phi,\psi) \vol(L)$.
		\item[(e)] $|\vol(L,\phi,\psi)-\vol(L,\phi',\psi')| \leq \left(d(\phi,\phi')+d(\psi,\psi')\right) \vol(L)$.	
	\end{itemize}	
\end{prop}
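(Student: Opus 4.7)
The plan is to deduce everything from standard properties of the determinant norm recalled in \cite[\S 2.1--2.3]{BE} together with two elementary observations: first, that since $|s|_\phi = |s|\cdot e^{-\phi}$, the map $\phi \mapsto \|\cdot\|_\phi$ on the space of bounded metrics is \emph{order-reversing}, \textit{i.e.}\ $\phi \le \phi'$ implies $\|s\|_\phi \ge \|s\|_{\phi'}$ for every $s \in H^0(X,L)$; second, the limit formula $\lim_{m\to\infty} n!\,N_m/m^n = \vol(L)$.

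For (a) there is nothing to prove since $\det(\metr_{m\phi})/\det(\metr_{m\phi})=1$. For (b), recall from \cite[Prop.\ 2.12]{BE} that the determinant norm is monotone: $\metr \le \metr'$ on $V$ forces $\det(\metr) \le \det(\metr')$ on $\det V$. Combined with the order-reversing observation, $\phi \le \phi'$ gives $\det(\metr_{m\phi}) \ge \det(\metr_{m\phi'})$, hence $\log(\det(\metr_{m\psi})/\det(\metr_{m\phi})) \le \log(\det(\metr_{m\psi})/\det(\metr_{m\phi'}))$; dividing by $m^{n+1}/n!$ and taking the $\limsup$ yields monotonicity in $\phi$. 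The argument for $\psi$ is analogous.

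Part (c) is the only step involving a bit of algebra. Since $\|s\|_{m(\phi+c)} = e^{-mc}\|s\|_{m\phi}$ on the $N_m$-dimensional space $H^0(X,mL)$, the scaling property of the determinant norm gives $\det(\metr_{m(\phi+c)}) = e^{-mc N_m}\det(\metr_{m\phi})$. Therefore
\[
\vol(\metr_{m(\phi+c)},\metr_{m\psi}) = \vol(\metr_{m\phi},\metr_{m\psi}) + mc N_m,
\]
and multiplying by $n!/m^{n+1}$ and passing to the $\limsup$ produces the extra term $c\,\vol(L)$ in view of the limit formula for $N_m$.

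For (d), apply monotonicity in $\phi$ from (b) together with the pointwise bound $\phi \le \psi + d(\phi,\psi)$ and then (c) with $c = d(\phi,\psi)$:
\[
\vol(L,\phi,\psi) \le \vol(L,\psi + d(\phi,\psi),\psi) = \vol(L,\psi,\psi) + d(\phi,\psi)\vol(L) = d(\phi,\psi)\vol(L).
\]
Finally (e) is a combination of (b), (c) and (d). Using the pointwise estimates $\phi' \le \phi + d(\phi,\phi')$ and $\psi' \ge \psi - d(\psi,\psi')$, monotonicity and translation give
\[
\vol(L,\phi',\psi') \le \vol(L,\phi,\psi) + \bigl(d(\phi,\phi') + d(\psi,\psi')\bigr)\vol(L),
\]
and exchanging the roles of the primed and unprimed metrics yields the reverse inequality; together they give the claimed bound on $|\vol(L,\phi,\psi) - \vol(L,\phi',\psi')|$. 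No step is really an obstacle; the only point demanding care is the sign convention in (b), where the exponential parametrization $|\cdot|_\phi = e^{-\phi}|\cdot|$ inverts the order between metrics and their sup-norms.
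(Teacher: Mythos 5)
Your proof is correct and follows essentially the same route as the paper: the paper cites the packaged properties of relative volumes of norms in \cite[Proposition 2.14(ii),(iv)]{BE} for (c) and (e), whereas you unfold exactly those determinant-norm arguments (monotonicity, scaling by $e^{-mcN_m}$), arriving at the same conclusions. The only organizational difference is that the paper deduces (d) from (a) and (e) while you deduce (d) from (b), (c), and (a), and in (e) you implicitly use the twin of (c) for the second argument, $\vol(L,\phi,\psi+c)=\vol(L,\phi,\psi)-c\vol(L)$; spelling that one line out would make the proof of (e) self-contained, but the argument is sound.
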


\begin{proof} 
	Properties (a), (b) are obvious and (c), (e)  follow from~\cite[Proposition 2.14(ii),(iv)]{BE}. Finally (d) is a consequence of (a) and (e).
\end{proof}

\begin{lemma} \label{lemma comparison}
	Let $\phi, \psi$ be bounded metrics on $L^{\an}$. We suppose that either the valuation on $K$ is discrete or that $\phi,\psi$ are model metrics  induced by line bundles on models which are integrally closed in $X$. Then 
	
	\begin{equation}
	\label{eq comparison volumes}
	\length \left( \frac{\widehat{H^0}(X, mL, m\phi)}{\widehat{H^0}(X, m L, m\psi )} \right)
	=
	\vol ( \| \cdot \|_{m\phi}, \| \cdot \|_{m\psi} ) + O(N_m).
	\end{equation}
\end{lemma}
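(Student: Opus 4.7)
The plan is to reduce the identity to the dictionary between contents and determinant norms recorded in Remark~\ref{relative volume and length}, and then to control the discrepancy between the sup-norms $\metr_{m\phi}, \metr_{m\psi}$ and the lattice norms attached to their unit balls.

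First I would check that under either hypothesis the $\Ko$-modules $\Vcal_\phi^{(m)} \coloneqq \widehat{H^0}(X, mL, m\phi)$ and $\Vcal_\psi^{(m)} \coloneqq \widehat{H^0}(X, mL, m\psi)$ are lattices in $V_m \coloneqq H^0(X, mL)$. In the discretely valued case this is recalled in \S~\ref{space of small sections}, while in the model metric case it follows from Lemma~\ref{global sections and widehat} (applied to $m\Lcal$, which is still determined on an integrally closed model) combined with Remark~\ref{lattice induced by model}. The identification in Remark~\ref{relative volume and length} then rewrites the left-hand side of \eqref{eq comparison volumes} as $\vol(\metr_{\Vcal_\phi^{(m)}}, \metr_{\Vcal_\psi^{(m)}})$.

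The heart of the argument is a pointwise comparison between $\metr_{\Vcal_\phi^{(m)}}$ and $\metr_{m\phi}$. Since $\Vcal_\phi^{(m)}$ is by definition the unit ball of $\metr_{m\phi}$, for $v \in V_m$ with $\|v\|_{m\phi} > 0$ one has
$$\|v\|_{\Vcal_\phi^{(m)}} = \inf\bigl\{|\alpha| : \alpha \in K^{\times},\; |\alpha| \geq \|v\|_{m\phi}\bigr\} \geq \|v\|_{m\phi}.$$
When $|K^{\times}|$ is dense in $\R_{>0}$ the infimum equals $\|v\|_{m\phi}$ exactly, and when $|K^{\times}| = q^{\Z}$ for some $q \in (0,1)$ one has $\|v\|_{\Vcal_\phi^{(m)}} \leq q^{-1}\|v\|_{m\phi}$. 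In both cases there is a constant $C \geq 0$ depending only on $K$ with $1 \leq \|v\|_{\Vcal_\phi^{(m)}}/\|v\|_{m\phi} \leq e^{C}$, uniformly in $m$ and in $v$; and analogously for $\psi$.

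To conclude, I would pass to the determinant norms. Since the determinant norm is monotone in the underlying norm (by its infimum definition in \S~\ref{relative volume}), the pointwise comparison above yields
$$\det(\metr_{m\phi}) \leq \det(\metr_{\Vcal_\phi^{(m)}}) \leq e^{CN_m}\, \det(\metr_{m\phi}),$$
and the same bound for $\psi$. Taking logarithms and subtracting gives $\vol(\metr_{\Vcal_\phi^{(m)}}, \metr_{\Vcal_\psi^{(m)}}) = \vol(\metr_{m\phi}, \metr_{m\psi}) + O(N_m)$, which combined with the first step is the identity \eqref{eq comparison volumes}. The only subtlety I expect is keeping the $O$-constant uniform in $m$: the essential point is that the gap between sup-norm and lattice norm is controlled purely by the value group of $K$, independently of $m$ and of the chosen metric.
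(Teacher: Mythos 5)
Your proof is correct and follows the same overall structure as the paper's: identify the unit balls $\widehat{H^0}(X,mL,m\phi)$ and $\widehat{H^0}(X,mL,m\psi)$ as lattices, convert the content on the left-hand side into a relative volume of the associated lattice norms via Remark~\ref{relative volume and length}, and then compare those lattice norms to the sup-norms. Where you differ is in how that last comparison is established. The paper handles the densely valued case by observing that the lattice norms then coincide outright with the sup-norms, and cites Proposition~2.21 of \cite{BE} for the discretely valued case. You instead derive the comparison directly from the formula for the lattice norm of the unit ball: the multiplicative gap between the lattice norm and the sup-norm is bounded, uniformly in $m$ and in the section, by the spacing $e^{C}$ of the value group $|K^\times|$ ($C=0$ when the value group is dense, $C=\log(1/q)$ when it is $q^\Z$), and monotonicity and scaling of the determinant norm turn this into a multiplicative gap of at most $e^{CN_m}$ on determinant lines, hence an $O(N_m)$ additive error on relative volumes. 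This is arguably cleaner: it treats both cases in one stroke, makes the $O(N_m)$ constant explicit, and in effect re-derives the content of the cited comparison result in this particular setting.
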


\begin{proof}
	By~\ref{space of small sections}, we have that $B_m \coloneqq \widehat{H^0}(X,L^{\otimes m}, m\phi)$ and $B_m' \coloneqq \widehat{H^0}(X,L^{\otimes m}, m\psi )$ are lattices in $H^0(X,mL)$. Let $\metr_{B_m}$ and $\metr_{B_m'}$ be the associated lattice norms.  Then we have 
	\begin{equation}
	\label{eq volumes}
	\vol(\| \cdot \|_{B_m}, \| \cdot \|_{B_m'}) = \vol(\metr_{m\phi}, \| \cdot \|_{m\psi}) +O(N_m).
	\end{equation}
	When $K$ is densely valued, \eqref{eq volumes} is actually an equality as then obviously $\| \cdot \|_{B_m}=\metr_{m\phi}$ and $\| \cdot \|_{B_m'}=\| \cdot \|_{m\psi}$. When $K$ is discretely valued, \eqref{eq volumes} holds by ~\cite[Proposition 2.21]{BE}. Now the claim follows from \eqref{eq volumes} and Remark~\ref{relative volume and length}.
\end{proof}

\begin{rem} \label{compare volumes}
	If the valuation on $K$ is discrete or if $\phi,\psi$ are model metrics  induced by line bundles on models which are integrally closed in $X$, then  Lemma~\ref{lemma comparison} implies 
	\begin{equation} \label{non-Archimedean volume_BGJKM} 
	\vol(L,\phi, \psi) = \limsup_{m \to \infty} \frac{n!}{m^{n+1}}  \length \left( \frac{\widehat{H^0}(X,mL, m\phi)}{\widehat{H^0}(X,mL, m\psi )} \right).
	\end{equation}
	In the case of a discretely valued field with $v(\pi)=1$ for an uniformizer $\pi$, we conclude that $\vol(L,\phi, \psi)$ agrees with the non-Archimedean volume considered  in~\cite{BGJKM}. 
\end{rem}

By Chow's lemma, the following result can be used to reduce to the case of projective schemes.

\begin{lemma} \label{volumes and birationality}
	Let $f:X' \to X$ be a birational map of proper reduced schemes over $X$. For bounded metrics $\phi,\psi$ of the line bundle $L$ over $X$, we have
	$$\vol(\metr_{mf^*\phi},\metr_{mf^*\psi})=\vol(\metr_{m\phi},\metr_{m\psi})+o(m^{n+1})$$
	for $m \to \infty$ and hence
	$$\vol(f^*L,f^*\phi,f^*\psi)=\vol(L,\phi,\psi).$$	
\end{lemma}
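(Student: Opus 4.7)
The plan is to relate the two relative volumes directly by pulling back sections along $f$. Since $f$ is proper and birational between reduced $K$-schemes, $f^\an$ is surjective (any proper birational morphism is surjective, and analytification preserves surjectivity of proper morphisms). Pull-back $f^* \colon H^0(X, mL) \to H^0(X', mf^*L)$ is injective because $X'$ is reduced, so setting $V_m := f^* H^0(X, mL) \subseteq W_m := H^0(X', mf^*L)$ we have, for every $s \in H^0(X, mL)$,
\begin{equation*}
\|f^*s\|_{mf^*\phi} = \sup_{y \in (X')^\an} |s(f^\an(y))|_{m\phi} = \sup_{x \in \Xan} |s(x)|_{m\phi} = \|s\|_{m\phi},
\end{equation*}
and similarly for $\psi$. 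Hence the sup-norms on $W_m$ restrict to the sup-norms on $V_m$, and $d(f^*\phi, f^*\psi) = d(\phi, \psi) =: C$. Birational invariance of the volume gives $\vol(L) = \vol(f^*L)$, so that $\dim W_m - \dim V_m = o(m^n)$, and thus $\dim A_m = o(m^n)$, where $A_m := W_m / V_m$.

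Next, equip $A_m$ with the quotient norms $q^\phi_m, q^\psi_m$ induced from $\metr_{mf^*\phi}$ and $\metr_{mf^*\psi}$. By the standard behaviour of determinant norms of finite-dimensional non-Archimedean normed spaces along short exact sequences, each norm $N$ on $W_m$ satisfies $\det N = c_N \cdot \det(N|_{V_m}) \cdot \det(N^q_{A_m})$ for some $c_N \in (0,1]$ with $|\log c_N| = O(\dim W_m) = O(m^n)$. Taking the logarithmic difference of this relation for $N = \metr_{mf^*\phi}$ and $N = \metr_{mf^*\psi}$ yields
\begin{equation*}
\vol(\metr_{mf^*\phi}, \metr_{mf^*\psi}) = \vol(\metr_{m\phi}, \metr_{m\psi}) + \vol(q^\phi_m, q^\psi_m) + O(m^n).
\end{equation*}
Since $e^{-mC} q^\phi_m \le q^\psi_m \le e^{mC} q^\phi_m$, the middle term is bounded by $mC \cdot \dim A_m = o(m^{n+1})$; the remainder $O(m^n)$ is also $o(m^{n+1})$.

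Combining both error terms gives the first displayed identity of the lemma, and dividing by $m^{n+1}/n!$ and passing to the limsup yields $\vol(f^*L, f^*\phi, f^*\psi) = \vol(L, \phi, \psi)$. The main obstacle is to control the defect in the multiplicativity of the determinant norm along the exact sequence $0 \to V_m \to W_m \to A_m \to 0$ over a possibly non-discretely valued field; this is handled through an almost-orthogonal decomposition of non-Archimedean norms (in the spirit of the content theory recalled in \S~\ref{lattices and content} and the determinant norm formalism used for the definition of $\vol$), which furnishes a defect bounded by $O(\dim W_m) = O(m^n)$, negligible after normalization by $m^{n+1}$.
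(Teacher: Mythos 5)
Your overall architecture is reasonable and likely close in spirit to the cited source (the paper itself simply refers to [BE, Lemma~9.11(v)] and its proof): you embed $V_m := f^*H^0(X,mL)$ isometrically into $W_m := H^0(X',mf^*L)$ via pullback (using surjectivity of $f^\an$), and then try to control the contribution of the quotient $A_m := W_m/V_m$ to the relative determinant volumes. The isometry of the pullback and the comparison $e^{-mC}q^\phi_m \le q^\psi_m \le e^{mC}q^\phi_m$ on the quotient are both correct.

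However, there is a genuine gap in the step where you claim that birational invariance of the volume, $\vol(L)=\vol(f^*L)$, implies $\dim A_m = o(m^n)$. Equality of the two $\limsup$'s
$\vol(L)=\limsup_m n!\,m^{-n}h^0(X,mL)$ and $\vol(f^*L)=\limsup_m n!\,m^{-n}h^0(X',mf^*L)$
does not control the term-by-term difference $h^0(X',mf^*L)-h^0(X,mL)$; this difference, though nonnegative, could still be of order $m^n$ along a subsequence while the two $\limsup$'s agree. (Invoking the Chen--Maclean limit theorem to upgrade the $\limsup$'s to $\lim$'s is not available either, since this lemma is stated for reduced, not geometrically reduced, $X$; indeed it is used to \emph{prove} the limit theorem, Theorem~\ref{existence of the limit}.) The correct justification is more direct: because $f$ is proper, surjective and birational and $X,X'$ are reduced, the natural map $\Ocal_X\to f_*\Ocal_{X'}$ is injective with cokernel $\Fcal$ a coherent sheaf supported on a proper closed subset (the complement of an open over which $f$ is an isomorphism), hence $\dim\supp\Fcal < n$. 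By the projection formula $H^0(X',mf^*L)\cong H^0(X, f_*\Ocal_{X'}\otimes mL)$, and tensoring $0\to\Ocal_X\to f_*\Ocal_{X'}\to\Fcal\to 0$ by $mL$ and taking $H^0$ gives
$\dim A_m = h^0(X',mf^*L)-h^0(X,mL)\le h^0(X,\Fcal\otimes mL)=O(m^{n-1})$,
which is what you need (in fact stronger than $o(m^n)$). With this replacement, your bound on $\vol(q^\phi_m,q^\psi_m)$ and the determinant-norm bookkeeping along the exact sequence $0\to V_m\to W_m\to A_m\to 0$ go through and deliver the desired $o(m^{n+1})$ error. One further remark: the multiplicativity defect $c_N$ you introduce is in fact trivial -- the determinant norm of [BE, \S 2.1] is exactly multiplicative in short exact sequences of normed spaces with sub/quotient norms -- but since you only use $|\log c_N|=O(m^n)$, this does no harm.
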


\begin{proof}
	This follows from~\cite[Lemma 9.11(v)]{BE} and its proof. Note that the projectivity assumption there can be replaced by properness.	
\end{proof}

Next, we give the behaviour of the relative volumes in Definition~\ref{defi volumes} with respect to a base extension $F/K$ of non-Archimedean fields. We recall from~\cite[Definition 1.24, Proposition 1.25]{BE} that the base change of an ultrametric norm $\metr$ on a  $K$-vector space $V$ is the ultrametric norm on the $F$-vector space $V_F \coloneqq V \otimes_K F$ given for $w \in  V_F$ by 
$$\|w\|_F \coloneqq \inf_{\sum \alpha_i v_i =w} \max_i |\alpha_i| \|v_i\|$$
where the infimum runs over all finite decompositions $\sum \alpha_i v_i =w$  with $\alpha_i \in F$ and $v_i \in V$. 
We will use similar notation to denote base changes of schemes and line bundles.

\begin{prop}
	\label{cor vol comparison base change}
	We assume that $X$ is geometrically reduced. 
	Let $\phi, \psi$ be continuous metrics on $L^{\an}$ and let 
	$F/K$ be a non-Archimedean field extension.
	Then
	$$
	\vol ( \| \cdot \|_{m\phi}, \| \cdot \|_{m\psi}     ) 
	=
	\vol ( \| \cdot \|_{m\phi_F},  \| \cdot \|_{m\psi_F}) + o(mN_m)
	$$
	for $m \to \infty$ and hence $\vol(L_F,\phi_F,\psi_F)=\vol(L,\phi,\psi)$.
\end{prop}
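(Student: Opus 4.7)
The plan is to first reduce the second assertion (the volume identity) to the first (the approximation estimate), which follows by dividing by $m^{n+1}/n!$ and passing to the $\limsup$. For the norm-level statement, the starting point is that determinant norms are compatible with base change, $\det((\|\cdot\|)_F) = (\det\|\cdot\|)_F$ on any finite-dimensional $K$-vector space (see \cite{BE}), and base change is an isometry between one-dimensional $K$- and $F$-vector spaces; so relative volumes of pairs of norms are preserved under $-\otimes_K F$. The problem thus reduces to comparing, on $V_F := H^0(X_F,mL_F) = H^0(X,mL)\otimes_K F$, the intrinsic sup-norm $\|\cdot\|_{m\phi_F}$ with the base-changed sup-norm $(\|\cdot\|_{m\phi})_F$ (and similarly for $\psi$), and to showing that their relative volume is $o(mN_m)$.

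Next, surjectivity of $X_F^{\an}\to X^{\an}$ together with the definition of the base-changed norm gives the one-sided inequality $\|\cdot\|_{m\phi_F}\le (\|\cdot\|_{m\phi})_F$, so the relative volume is non-negative and I only need an upper bound. Continuity of the non-Archimedean volume in the uniform distance (Proposition~\ref{basic properties of non-archimedean volume}(e)) together with the fact that base change does not increase the uniform distance between continuous metrics, combined with the density of model metrics recalled in \ref{model metrics}, reduce the estimate to the case where $\phi,\psi$ are model metrics. By Chow's lemma and Lemma~\ref{volumes and birationality} one may assume $X$ projective; base-changing first through $\widehat{\overline{K}}$ and invoking Lemma~\ref{lemma cofinal}, one may further assume $\phi=\phi_\Lcal$ and $\psi=\phi_\Mcal$ come from line bundles on a common model $\Xcal$ of $X$ which is integrally closed in $X$.

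In this setting Lemma~\ref{lemma equality} identifies $\widehat{H^0}(X,mL,m\phi)$ with $H^0(\Xcal,m\Lcal)$ and likewise for $\psi$, and Lemma~\ref{lemma comparison} identifies both relative volumes (over $K$ and over $F$) with contents of quotients of $\Ko$- and $F^\circ$-lattices, up to an $O(N_m) = o(mN_m)$ error. The content of a finitely presented torsion $\Ko$-module is unchanged by extension of scalars $\Ko \to F^\circ$, as one sees immediately from the elementary divisor decomposition in \ref{structure thm of finitely presented modules}, so only the discrepancy between the naive base change $H^0(\Xcal,m\Lcal)\otimes_{\Ko}F^\circ$ and the integrally closed lattice $\widehat{H^0}(X_F,mL_F,m\phi_F)$ inside $H^0(X_F,mL_F)$ can contribute. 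The main obstacle is to bound the content of this discrepancy by $o(m^{n+1})$, uniformly in $m$ and without Noetherian hypotheses on $F^\circ$; the key inputs are a Hilbert--Samuel-type asymptotic for torsion cohomology on finitely presented proper $F^\circ$-schemes (in the spirit of Section~\ref{section volume torsion}) together with the direct image theorem of Ullrich.
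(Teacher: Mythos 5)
The paper's own proof is a one-line citation: the statement follows directly from~\cite[Lemma~9.4]{BE}, with the remark that the projectivity hypothesis there can be replaced by properness. Your proposal instead attempts a from-scratch argument through models, contents, and integrally closed lattices, which is a genuinely different route, but it has two real problems.

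First, the reduction to an algebraically closed base is circular. To invoke Lemma~\ref{lemma cofinal} (cofinality of integrally closed models) and Lemma~\ref{lemma comparison} (identification of the relative volume of sup-norms with contents of lattice quotients), you need $K$ algebraically closed or discretely valued. You propose to ``base-change first through $\widehat{\overline{K}}$'', but the equality $\vol(L,\phi,\psi)=\vol(L_{\widehat{\overline{K}}},\phi_{\widehat{\overline{K}}},\psi_{\widehat{\overline{K}}})$ is itself an instance of the very proposition being proved, applied to the extension $\widehat{\overline{K}}/K$ with $K$ not algebraically closed. Without a separate argument for that case you cannot get off the ground, and for exactly that case none of the paper's model-theoretic lemmas apply.

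Second, and more importantly, you explicitly flag ``the main obstacle'' — bounding the content of the quotient of $\widehat{H^0}(X_F,mL_F,m\phi_F)$ by $H^0(\Xcal,m\Lcal)\otimes_{\Ko}F^\circ$ by $o(m^{n+1})$ — and do not resolve it. This is where essentially all of the difficulty is concentrated: you would need the integral closure $\Xcal'_F$ of $\Xcal\otimes_{\Ko}F^\circ$ in $X_F$ to be a finite modification (over a possibly non-Noetherian valuation ring $F^\circ$ this is non-trivial and again hinges on the reduced fiber theorem), and then a cohomological estimate for the conductor. Listing ``key inputs'' is not the same as giving the argument. The cited Lemma~9.4 of~\cite{BE} instead compares the sup-norm $\metr_{m\phi_F}$ directly with the base-changed norm $(\metr_{m\phi})_F$ purely at the level of norms and determinant lines, which is what lets the paper dispense with all model considerations. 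As written, your proposal does not establish the proposition.
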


\begin{proof} 
	This follows from~\cite[Lemma 9.4]{BE}. Again, projectivity is not used there. 
\end{proof}

The next result is a consequence of the limit theorem of  Chen and Maclean ~\cite[Theorem 4.3]{chen-maclean} as shown in~\cite[\S 9.2]{BE}.

\begin{theo} \label{existence of the limit}
	Let $L$ be a line bundle on a geometrically reduced proper scheme $X$ over $K$ and let $\phi,\psi$ be bounded metrics on $\Lan$. Then the limsup in the definition of the non-Archimedean volume $\vol(L,\phi,\psi)$ is a limit, \textit{i.e.}
	$$\vol(L,\phi, \psi) = \lim_m \frac{n!}{m^{n+1}} \vol ( \| \cdot \|_{m\phi}, \| \cdot \|_{m\psi} ).$$
	The same holds in \eqref{non-Archimedean volume_BGJKM}.
\end{theo}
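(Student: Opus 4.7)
The plan is to deduce the theorem from the limit theorem of Chen and Maclean \cite[Theorem 4.3]{chen-maclean}, following the strategy sketched in \cite[\S 9.2]{BE}. The key idea is to view $R_\bullet \coloneqq \bigoplus_{m\geq 0} H^0(X,mL)$ as a graded $K$-algebra equipped with two sequences of submultiplicative ultrametric sup-norms $\|\cdot\|_{m\phi}$ and $\|\cdot\|_{m\psi}$, and to invoke their result to obtain separately the convergence of $\tfrac{n!}{m^{n+1}}\log\det(\|\cdot\|_{m\phi})$ and of $\tfrac{n!}{m^{n+1}}\log\det(\|\cdot\|_{m\psi})$ up to an overall additive constant; subtracting yields the desired limit.

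First, I would reduce to the case of a projective $X$. Chow's lemma produces a birational proper morphism $f\colon X' \to X$ with $X'$ projective; replacing $X'$ by its reduction and using geometric reducedness, $X'$ is also geometrically reduced. Lemma~\ref{volumes and birationality} and, crucially, its proof show that the sup-norm volumes on $X$ and $X'$ differ by $o(m^{n+1})$, so it suffices to prove convergence for $X'$.

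Second, I would verify the hypotheses of Chen-Maclean on $(R_\bullet,\|\cdot\|_{m\chi})$ for a bounded metric $\chi$. Geometric reducedness of $X$ guarantees that each $\|\cdot\|_{m\chi}$ is a genuine ultrametric norm on $H^0(X,mL)$ (cf.\ \cite[Lemma~4.1]{BE}); sup-norms are automatically submultiplicative under the product of $R_\bullet$; and boundedness of $\phi-\psi$ on the compact Berkovich space $X^{\an}$ gives the uniform comparison $e^{-md(\phi,\psi)}\|\cdot\|_{m\psi} \leq \|\cdot\|_{m\phi} \leq e^{md(\phi,\psi)}\|\cdot\|_{m\psi}$, so the two norm systems differ by controlled factors from any fixed reference (for instance the lattice norm coming from a projective model of $(X,L)$). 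Chen-Maclean then applies to both $\phi$ and $\psi$, and taking the difference gives the existence of $\lim_m \tfrac{n!}{m^{n+1}}\vol(\|\cdot\|_{m\phi}, \|\cdot\|_{m\psi})$. The corresponding statement for $\widehat{H^0}$ in \eqref{non-Archimedean volume_BGJKM} follows from Lemma~\ref{lemma comparison}: the $O(N_m)=O(m^n)$ error there is negligible under the $m^{n+1}$ normalization.

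The main obstacle is matching the lattice-theoretic framework in which Chen-Maclean's theorem is naturally formulated with the sup-norms that arise from Berkovich metrics in our general non-Archimedean, possibly non-discretely-valued setting. Geometric reducedness is precisely what is needed here: by Proposition~\ref{cor vol comparison base change} one may extend scalars to the completed algebraic closure $\widehat{\overline{K}}$ without changing the relative volume, and over this larger field the sup-norms are well-behaved enough to be compared with the lattice norms on which Chen and Maclean's argument directly operates. Once this translation is in place, their limit theorem supplies the convergence that the $\limsup$ in the definition of $\vol(L,\phi,\psi)$ is in fact a limit.
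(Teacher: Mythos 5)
Your proposal follows the same route as the paper: reduce to projective $X$ via Chow's lemma and Lemma~\ref{volumes and birationality}, invoke the Chen--Maclean limit theorem as packaged in~\cite[Theorem~9.8]{BE}, and obtain the $\widehat{H^0}$ version from Lemma~\ref{lemma comparison}. The paper simply cites~\cite[Theorem~9.8]{BE} for the middle step, so your sketch of how Chen--Maclean enters (submultiplicative sup-norms, base change to $\widehat{\overline K}$ via Proposition~\ref{cor vol comparison base change}, comparison with a reference model lattice norm) is essentially a summary of what~\cite[\S 9.2]{BE} does rather than a different argument.
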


\begin{proof}
	We reduce to $X$ projective by Lemma~\ref{volumes and birationality}.    Then the first claim follows from~\cite[Theorem 9.8]{BE}. The second claim follows from the first claim and \eqref{eq comparison volumes}.
\end{proof}

The existence of the limit has the following obvious consequences. They simplify the proofs of the main results in this paper quite a lot, however one could also prove them without Theorem~\ref{existence of the limit} and without Corollary~\ref{lemma homogeneity} similarly as  in~\cite{BGJKM}.

\begin{cor} 
	\label{lemma homogeneity}
	Let $L$ be a line bundle on an $n$-dimensional geometrically reduced proper scheme $X$ over $K$ and let $\phi,\psi,\phi_1,\phi_2,\phi_3$ be bounded metrics on $L^{\an}$. Then we have:
	\begin{itemize}
		\item[(i)] $\vol(L,\phi,\p)=-\vol(L,\p,\phi)$
		\item[(ii)] $
		\vol(L,\phi_1,\phi_2)+\vol(L,\phi_2,\phi_3)+\vol(L,\phi_3,\phi_1)=0$.
		\item[(iii)]  $\vol(aL,a\phi,a\p)=a^{n+1}\vol(L,\phi,\p)$ for all $a \in \N$. 
	\end{itemize}
\end{cor}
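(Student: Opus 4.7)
The plan is to reduce all three identities to corresponding statements about the relative volume $\vol(\metr_1,\metr_2)$ of norms on a fixed finite-dimensional vector space, and then transfer them through the limit in Theorem~\ref{existence of the limit}. Since the relative volume is defined as $\log\bigl(\det(\metr_2)/\det(\metr_1)\bigr)$ on the one-dimensional space $\det(V)$, it is by construction antisymmetric and telescoping in its two arguments: for any three norms $\metr_1,\metr_2,\metr_3$ on the same space $V$,
\begin{equation*}
\vol(\metr_1,\metr_2) = -\vol(\metr_2,\metr_1), \qquad \sum_{\mathrm{cyc}} \vol(\metr_i,\metr_{i+1}) = 0.
\end{equation*}

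For (i) and (ii), I would apply these two identities with $\metr_i = \metr_{m\phi_i}$ on $H^0(X,mL)$, multiply by $n!/m^{n+1}$, and pass to the limit $m\to\infty$. Because the limsup in the definition of $\vol(L,\cdot,\cdot)$ is in fact a limit by Theorem~\ref{existence of the limit}, and because (i), (ii) are finite linear identities valid for every $m$, they pass to the limit term by term, yielding the claimed antisymmetry and cocycle relation.

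For (iii), I would use the obvious identification $\metr_{m(a\phi)} = \metr_{(ma)\phi}$ as norms on $H^0(X,m(aL)) = H^0(X,(ma)L)$. Setting $k = ma$,
\begin{equation*}
\vol(aL,a\phi,a\p) = \lim_{m \to \infty} \frac{n!}{m^{n+1}}\,\vol\bigl(\metr_{k\phi},\metr_{k\p}\bigr) = a^{n+1}\lim_{m \to \infty} \frac{n!}{k^{n+1}}\,\vol\bigl(\metr_{k\phi},\metr_{k\p}\bigr).
\end{equation*}
As $m$ ranges over $\N$, the index $k = ma$ ranges through the subsequence of multiples of $a$, but since $\vol(L,\phi,\p) = \lim_{k\to\infty} \tfrac{n!}{k^{n+1}}\vol(\metr_{k\phi},\metr_{k\p})$ exists by Theorem~\ref{existence of the limit}, every subsequence has the same limit, and we obtain $\vol(aL,a\phi,a\p) = a^{n+1}\vol(L,\phi,\p)$.

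There is no genuine obstacle here; the only subtle point is (iii), where one must invoke the existence of the limit rather than just the limsup to justify restricting to the arithmetic progression $k \in a\N$. Once that is observed, all three assertions are essentially formal.
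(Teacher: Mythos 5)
Your proof is correct and matches the paper's intended approach: the paper states this corollary as an ``obvious consequence'' of Theorem~\ref{existence of the limit}, and you have simply spelled out the details, namely, the exact antisymmetry and cocycle identities for $\vol(\metr_1,\metr_2)=\log\bigl(\det(\metr_2)/\det(\metr_1)\bigr)$ at each level $m$, and for (iii) the observation that the multiples of $a$ form a subsequence along which the limit is unchanged.
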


\section{Volumes on torsion schemes}
\label{section volume torsion}

Let $K$ be any non-Archimedean field, with valuation $v$. In this section, we consider schemes over $\Ko$ which have non-trivial $\Ko$-torsion. We call them torsion schemes over $\Ko$. Our main examples are closed subschemes  of a scheme over $\Ko$ with support in the special fiber. We will see that the space of global sections of a line bundle $L$  over a finitely presented  projective torsion scheme over $\Ko$ is a finitely presented torsion $\Ko$-module and hence we can define the volume of $L$ by mimicking the classical construction from algebraic geometry by using the content from \S \ref{lattices and content} instead of the dimension. We will prove some basic properties of the volume analogously to \cite[Section 3]{BGJKM}. The main difficulty here is that our torsion schemes are not Noetherian unless  the valuation is discrete.

\subsection{Hilbert--Samuel theory} \label{Hilbert-Samuel theory}
We introduce torsion schemes over $\Ko$, define the numbers $h^q(Y,\Fcal)$ for a coherent sheaf $\Fcal$ over such a torsion scheme $Y$, and show that they fit in the usual Hilbert--Samuel theory.

\begin{defi} \label{definition torsion scheme}
	We say that a  $\Ko$-scheme $Y$ is a {\it torsion  scheme} over $\Ko$ if $A$ is a torsion $\Ko$-module
	for any open affine subset $\Spec( A)$ of $Y$. 
\end{defi}

\begin{rem} \label{torsion for finite type}
	Now assume that $Y$ is a torsion scheme of finite type over $\Ko$. Then for any open affine subset $\Spec( A)$ of $Y$ there is a non-zero element $a \in \Ko$ such that 
	$a \cdot A = 0$. Moreover, $Y$ is a torsion scheme over $\Ko$ if and only if 
	there exists some non zero $b \in \Ko$, some  scheme $Y'$ over $\Spec \Ko/( b)$ such that $Y'$ is isomorphic to the $Y$ as a scheme over $\Spec \Ko$.
	If $Y$ is projective {(resp.~proper)}, then $Y'$ is projective {(resp.~proper)}  as well.
\end{rem}

\begin{art}
	Let $\alpha \in \Ko$ and let $A \coloneqq \Ko / (\alpha)$. 
	We pick an $n \in \N$ and set $S \coloneqq A[x_0, \dots, x_n]$. We consider the standard $\N$-grading on $S$.
\end{art}

\begin{defi}
	Let $n\in \N$ and $M$ be an $S$-graded module of finite presentation.
	We denote by $M_j$ the elements of $M$ of degree $j \in \N$.  
	Note that $M_j$ is a finitely presented torsion $\Ko$-module and hence we may use the content from   \S \ref{lattices and content} to define 
	the {\it Hilbert function of $M$} by
	$$ P_M(j) \coloneqq \length(M_j).$$
\end{defi}

\begin{lemma}
	\label{lemma hilbert theory ses}
	{Let $0 \to M' \to M \to    M'' \to 0$ 
		be a short exact sequence of finitely presented graded $S$-modules. 
		Then 
		we have
		$P_{M'}+ P_{M''} = P_{M}$.}
\end{lemma}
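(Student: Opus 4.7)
The plan is to reduce the assertion to the additivity of content stated in Proposition~\ref{prop exactness of length}, applied degree by degree.

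First I would observe that since the maps $M' \to M \to M''$ are graded $S$-module homomorphisms, for each $j \in \N$ they restrict to a sequence of $\Ko$-modules
\begin{equation*}
0 \to M'_j \to M_j \to M''_j \to 0,
\end{equation*}
which is exact as a sequence of $\Ko$-modules simply because exactness of the original sequence of graded modules is checked degree by degree. Here one views each graded piece as a $\Ko$-module via the canonical map $\Ko \to A = \Ko/(\alpha)$; note that the $\Ko$-action factors through $A$, so each $M_j$ is automatically $\alpha$-torsion and in particular a torsion $\Ko$-module.

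Second, I would check that each of $M'_j, M_j, M''_j$ is finitely presented as a $\Ko$-module, so that the invariant $\length(\cdot)$ from Definition~\ref{length of torsion module} is actually defined on all three pieces. This is the only slightly delicate point, since $\Ko$ is in general non-Noetherian. The key input is coherence of $\Ko$ (as recalled in~\ref{virtual length}, citing~\cite[Proposition 1.6]{ullrich95}), which propagates to coherence of $A$ and of $S = A[x_0,\dots,x_n]$; this implies that finitely presented graded $S$-modules have finitely presented graded pieces over $A$, and hence over $\Ko$. Alternatively one may invoke directly that in the stated setting $P_M(j) = \length(M_j)$ was assumed to be well-defined, i.e.\ each graded piece of a finitely presented $S$-module is finitely presented torsion over $\Ko$; the same then applies to $M'$ and $M''$.

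Once finite presentation is in place, Proposition~\ref{prop exactness of length} applied to the short exact sequence above yields
\begin{equation*}
\length(M_j) = \length(M'_j) + \length(M''_j)
\end{equation*}
for every $j$, which is exactly $P_M(j) = P_{M'}(j) + P_{M''}(j)$. The main obstacle, if any, is the finite presentation check of the graded pieces over the non-Noetherian ring $\Ko$; after that, the lemma is essentially a formal consequence of additivity of the content.
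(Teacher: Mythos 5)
Your proposal is correct and matches the paper's approach: the paper's proof is the single sentence ``This follows easily from Proposition~\ref{prop exactness of length},'' which is precisely your argument applied degree by degree. Your additional discussion of why each graded piece $M_j$ is a finitely presented torsion $\Ko$-module is a reasonable elaboration of a point that the paper simply asserts in the definition immediately preceding the lemma.
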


\begin{proof}
	This follows easily from Proposition \ref{prop exactness of length}.
\end{proof}

\begin{lemma}
	Let $\psi : M \to N$ be a morphism of finitely presented $S$-modules. 
	Then $\Ker(\psi)$ and $\Coker(\psi)$ are finitely presented $S$-modules.
\end{lemma}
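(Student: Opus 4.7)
My plan is to deduce the lemma from the fact that $S = A[x_0, \dots, x_n]$ is a \emph{coherent ring}, in the sense that every finitely generated ideal is finitely presented. Over a coherent ring, the finitely presented modules form an abelian subcategory of all modules, so in particular both $\Ker(\psi)$ and $\Coker(\psi)$ will be finitely presented. Granting this framework, the statement splits into two parts.

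For the cokernel I would argue directly, without invoking coherence. Since $M$ is finitely generated, the image $\psi(M) \subseteq N$ is finitely generated, being the submodule generated by the images of finitely many generators of $M$. Plugging this into the short exact sequence $0 \to \psi(M) \to N \to \Coker(\psi) \to 0$, the classical fact that a quotient of a finitely presented module by a finitely generated submodule is again finitely presented gives that $\Coker(\psi)$ is finitely presented.

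The kernel is where coherence is essential. Assuming $S$ is coherent, the finitely generated submodule $\psi(M) \subseteq N$ of the finitely presented module $N$ is then itself finitely presented. Applied to $0 \to \Ker(\psi) \to M \to \psi(M) \to 0$, with $M$ finitely generated and $\psi(M)$ finitely presented, the dual of the lemma used for the cokernel gives that $\Ker(\psi)$ is finitely generated. A final application of coherence, now to the finitely generated submodule $\Ker(\psi) \subseteq M$, yields that $\Ker(\psi)$ is finitely presented.

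The main obstacle is thus verifying that $S$ is coherent, which I would do in three layers. First, $\Ko$ is coherent, by the result of Ullrich already used in the preliminaries on content. Second, $A = \Ko/(\alpha)$ is coherent: since $\Ko$ is a valuation ring, every finitely generated ideal of $A$ is principal, generated by some $\bar a$ with $v(a) < v(\alpha)$, and a direct computation shows that its annihilator in $A$ equals the principal ideal $(\overline{\alpha/a})$, which is finitely generated, so the ideal $(\bar a)$ is finitely presented. Third, $S = A[x_0, \dots, x_n]$ is coherent, which I would obtain by induction on $n$ from the standard theorem that a polynomial extension of a coherent ring satisfying suitable finiteness hypotheses (amply satisfied by the zero-dimensional local ring $A$) remains coherent; see, for instance, Glaz, \emph{Commutative coherent rings}.
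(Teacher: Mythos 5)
Your strategy is the same as the paper's: reduce the lemma to coherence of $S$, then invoke the standard fact that over a coherent ring the finitely presented modules form an abelian subcategory (closed under kernels and cokernels). Your auxiliary observations are also sound: the direct cokernel argument is fine, and the annihilator computation showing that $A = \Ko/(\alpha)$ is coherent is correct (every finitely generated ideal of $A$ is principal, and $\mathrm{Ann}_A(\bar a) = (\overline{\alpha/a})$ is again principal).

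The gap is in the final layer, passing from coherence of $A$ to coherence of $S = A[x_0,\dots,x_n]$. Coherence of a commutative ring is \emph{not} in general inherited by its polynomial extension: Soublin gave a counterexample of a coherent ring $R$ with $R[x]$ non-coherent. The theorems that do transfer coherence to $R[x]$ impose hypotheses strictly stronger than coherence — e.g.\ that $R$ be semihereditary, or of finite weak global dimension — and neither of these is satisfied by the non-reduced ring $A = \Ko/(\alpha)$ when $\alpha$ is a non-unit (its weak global dimension is infinite, as for $\Z/p^2\Z$). Being zero-dimensional and local does not rescue this, and the proposed induction on $n$ would in any case have to apply the theorem to $A[x_0]$, which is neither zero-dimensional nor local. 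The ingredient that actually closes the argument, and the one the paper cites, is Ullrich's stronger result (Example~3.3 of~\cite{ullrich95}) that $\Ko$ is \emph{stably} (universally) coherent: every finitely presented $\Ko$-algebra is coherent. Since $S = A[x_0,\dots,x_n] \cong \Ko[x_0,\dots,x_n]/(\alpha)$ is a finitely presented $\Ko$-algebra, coherence of $S$ follows directly, with no need to establish coherence of $A$ itself or to appeal to a polynomial-extension theorem.
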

\begin{proof}
	It follows from \cite[Example 3.3]{ullrich95} that the ring $A$ is stably (universally) coherent and hence the lemma follows from standard properties of modules over coherent rings  
	(see for instance \cite[Tags 05CX and 05CW]{stacks-project}).
\end{proof}

\begin{rem} \label{stably coherent}
	Ullrich's results  \cite[Example 3.3]{ullrich95} show that every finitely presented $\Ko$-algebra is coherent, and hence that the structure sheaf of any finitely presented scheme $Y$ over $\Ko$ is coherent. As a result, an $\Ocal_Y$-module is coherent if and only if it is finitely presented. This yields that the pull-back of a coherent module with respect to a morphism of finitely presented (torsion) schemes over $\Ko$ is again coherent (see \cite[\S 0.5.3]{egaI}). The  direct image theorem \cite[Theorem 3.5]{ullrich95} with respect to a proper morphism of finitely presented schemes over $\Ko$ holds. In particular, for a coherent sheaf $\Fcal$ on a finitely presented proper scheme $Y$ over $\Ko$, all cohomology groups $H^i(Y,\Fcal)$ are finitely presented $\Ko$-torsion modules. 
\end{rem}

\begin{defi} \label{definition Euler characteristic}
	Let $\Fcal$ be a coherent sheaf on a finitely presented  {proper} 
	torsion scheme over $\Ko$. 
	Then we define  
	$$h^q(Y,\Fcal) \coloneqq \length(H^q(Y,\Fcal))$$
	for any $q \in \N$ and the {\it Euler characteristic} 
	$$\chi(Y,\Fcal) \coloneqq \sum_{q=0}^{\dim(Y)} (-1)^q	h^q(Y,\Fcal).$$
\end{defi}

\begin{prop} \label{multivariable Euler charactersitics}
	Let $Y$ be a finitely presented projective torsion scheme over $\Ko$ and let $L_1, \ldots, L_r$ be line bundles on $Y$. Then for any coherent sheaf $\Fcal$ on $Y$, the Euler characteristic $\chi(Y,\Fcal(m_1L_1+ \dots + m_rL_r))$ is a polynomial function of $(m_1, \dots, m_r) \in \Z^r$.
\end{prop}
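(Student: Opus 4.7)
The plan is to prove the proposition by induction on the dimension $d$ of the support of $\Fcal$, imitating the classical Snapper polynomial theorem but with content in place of dimension, and with care taken to handle the non-Noetherian torsion setting.

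First, by Remark~\ref{torsion for finite type} I may view $Y$ as a finitely presented projective scheme over $A:=\Ko/(\alpha)$ for some nonzero $\alpha\in\Koo$. Since $v(\beta^n)\geq v(\alpha)$ for $n$ large whenever $\beta\in\Koo$, the nilradical of $A$ is $\Koo/(\alpha)$ and $A_{\mathrm{red}}=\Kt$, so $|Y|=|Y_{\mathrm{red}}|$ and $Y_{\mathrm{red}}$ is a projective $\Kt$-scheme; in particular $Y$ is topologically Noetherian, so $\supp\Fcal$ has only finitely many irreducible components and a well-defined dimension $d$. Writing each $L_i$ as a difference of very ample line bundles on the projective scheme $Y$ (\cite[Corollaire~4.5.8]{ega2}) and treating the $2r$ resulting very ample line bundles $L_i^\pm$ as independent, it suffices to establish polynomiality of $\chi\bigl(Y,\Fcal(\sum_i m_i^+ L_i^+ + \sum_i m_i^- L_i^-)\bigr)$ on $\N^{2r}$, since such a function extends uniquely to a polynomial on $\Z^{2r}$ and substituting $m_i^- = -m_i^+$ then recovers the statement.

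The induction base $d\leq 0$ is immediate: then $\Fcal$ is supported on finitely many closed points around which every line bundle is trivial, so $\chi(Y,\Fcal(\cdot))$ is constant. For the inductive step, let $\eta_1,\dots,\eta_k$ be the generic points of the $d$-dimensional irreducible components of $\supp\Fcal$, fix one of the very ample $L_i^\pm$ (call it $L$), and choose a global section $s \in H^0(Y,L)$ not vanishing at any $\eta_j$. Multiplication by $s$ then yields a four-term exact sequence of coherent sheaves
\begin{equation*}
0\longrightarrow K \longrightarrow \Fcal \xrightarrow{\,\cdot s\,} \Fcal(L) \longrightarrow C \longrightarrow 0,
\end{equation*}
with coherence of $K$ and $C$ granted by Remark~\ref{stably coherent}; since $s$ is a unit in $\Ocal_{Y,\eta_j}$ for every $j$, both $K$ and $C$ have support contained in $\supp\Fcal\setminus\{\eta_1,\dots,\eta_k\}$, hence of dimension $<d$. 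Tensoring this sequence by the locally free sheaf $\Ocal(\sum_l n_l L_l^\pm)$ preserves exactness; splitting the resulting four-term sequence into two short exact sequences and taking alternating sums of contents along the induced long cohomology sequences (Proposition~\ref{prop exactness of length}) gives
\begin{equation*}
\chi\bigl(Y,\Fcal(L+\textstyle\sum_l n_l L_l^\pm)\bigr) - \chi\bigl(Y,\Fcal(\textstyle\sum_l n_l L_l^\pm)\bigr) = \chi\bigl(Y,C(\textstyle\sum_l n_l L_l^\pm)\bigr) - \chi\bigl(Y,K(\textstyle\sum_l n_l L_l^\pm)\bigr).
\end{equation*}
By the inductive hypothesis the right-hand side is polynomial, so each unit-step forward difference of the function $(n_l)\mapsto\chi(Y,\Fcal(\sum_l n_l L_l^\pm))$ on $\N^{2r}$ is polynomial, and standard finite-difference calculus upgrades this to polynomiality of the function itself.

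The main obstacle is the construction of the generic section $s$ in the torsion setting. Via the closed embedding $Y\hookrightarrow\Pbb^N_A$ defined by $L$, sections come from linear forms in $A^{N+1}$, and evaluation at the $\eta_j$ factors through the reduction $A\twoheadrightarrow A_{\mathrm{red}}=\Kt$ (as each residue field $\kappa(\eta_j)$ is a $\Kt$-algebra); thus the problem reduces to avoiding finitely many proper $\Kt$-hyperplanes in $\Kt^{N+1}$, which is classical prime avoidance when $\Kt$ is infinite. When $\Kt$ is finite, a flat base change to a non-Archimedean extension with infinite residue field does the job, since the content is preserved by such extensions -- this is immediate from the structure theorem~\ref{structure thm of finitely presented modules} combined with flatness of the larger valuation ring over $\Ko$ -- so polynomiality descends to $K$. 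All remaining ingredients (finiteness of cohomology in content via Ullrich's direct image theorem, coherence of kernels and cokernels over the coherent ring $\Ocal_Y$, and additivity of the content) transfer from the Noetherian case without modification.
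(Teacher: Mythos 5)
Your proof is correct, but it takes a genuinely different route from the paper's. Both proofs begin with the same reduction: write each $L_i=A_i-B_i$ as a difference of very ample bundles and work with $2r$ very ample line bundles. From there the paper pushes $\Fcal$ forward along a closed embedding $Y\hookrightarrow\Pbb^{n_1}\times\cdots\times\Pbb^{n_r}$ and inducts on $N=\sum n_i$, multiplying by a coordinate $y_{in_i}$ so that the kernel and cokernel are automatically supported on the smaller multiprojective space $\{y_{in_i}=0\}$. You instead run the classical Snapper induction on $d=\dim\supp\Fcal$, which requires a global section $s$ of a very ample $L$ avoiding the generic points $\eta_j$ of the top-dimensional components of $\supp\Fcal$; you obtain $s$ by lifting a $\Kt$-linear form chosen by prime avoidance, with a flat base change to an extension with infinite residue field when $\Kt$ is finite (justified by preservation of content and of cohomology under such base change). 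The trade-off is clear: the paper's pushforward trick avoids the generic-section construction entirely (and hence any appeal to base change or to the residue field being infinite), while your argument stays on $Y$ and is closer to the textbook Snapper proof but needs the extra section-finding machinery. Both then conclude with the same finite-difference calculus.

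One slip in the write-up: you should establish polynomiality of $(n_l)\mapsto\chi\bigl(Y,\Fcal(\sum_l n_l L_l^\pm)\bigr)$ on all of $\Z^{2r}$, not merely $\N^{2r}$. The claim that a polynomial function on $\N^{2r}$ "extends uniquely to a polynomial on $\Z^{2r}$" is true but does not by itself show that $\chi$ at the negative arguments $m_i^-=-m_i^+$ agrees with that polynomial, which is what the final substitution needs. Fortunately your own argument already works verbatim on $\Z^{2r}$: the four-term exact sequence stays exact after tensoring with $\Ocal(\sum_l n_l L_l^\pm)$ for any $n\in\Z^{2r}$, the base case $d\le 0$ is constant on $\Z^{2r}$, and the finite-difference lemma (if $f(n+e_l)-f(n)$ is polynomial for all $l$ then $f$ is polynomial) holds on $\Z^{2r}$. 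So this is a matter of phrasing, not substance: simply replace $\N^{2r}$ by $\Z^{2r}$ throughout, after which the "extends uniquely" step becomes unnecessary.
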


\begin{proof}
	In a first step, we assume that $L_1, \dots, L_r$ are very ample, and follow the lines of the classical proof.
	We have a closed embedding $i:Y \to \Pbb_{\Ko}^{n_1} \times \dots \times \Pbb_{\Ko}^{n_r}$ with  $L_i \cong i^*\Ocal(e_i)$   and using that cohomology does not change after passing to the push-forward $i_*$, we may assume that $Y = \Pbb_{\Ko}^{n_1} \times \dots \times \Pbb_{\Ko}^{n_r}$ and $L_i=\Ocal(e_i)$ for $i=1, \dots, r$ where $e_1,\dots,e_r$ is the standard basis of $\Z^r$. The homogeneous coordinates on $\Pbb_{\Ko}^{n_i}$ are denoted by $y_{i0},\dots, y_{in_i}$. We proceed by induction on $N \coloneqq \sum_{i=1}^r n_{i}$.  
	
	If $N=0$, then $Y = \Pbb_{\Ko}^0$  and all line bundles $L_i$ are trivial, hence  $\chi(Y,\Fcal(m_1L_1+ \dots + m_rL_r))$ is a constant function.
	So we may assume $N>0$ and that the claim holds for $N-1$. We may assume that $n_i>0$ for all $i=0, \dots, r$. Then we consider the morphism
	$\psi:\Fcal(-e_i) \to \Fcal$ 
	induced by multiplication with $y_{in_i}$ and  
	the induced short exact sequence
	$$0 \to \Ker(\psi) \to  \Fcal(-e_i) \to \Fcal \to \Coker(\psi) \to 0.$$
	It is clear that $\Ker(\psi)$ and $\Coker(\psi)$ are coherent sheaves on $Y=\Pbb_{\Ko}^{n_1} \times \dots \times \Pbb_{\Ko}^{n_r}$. In fact, the definition of $\psi$ as multiplication by $y_{in_i}$ yields easily that they are defined on the closed subscheme $y_{in_i}=0$ which is isomorphic to $\Pbb_{\Ko}^{n_1} \times \dots \times \Pbb_{\Ko}^{n_i-1}  \times \dots \times \Pbb_{\Ko}^{n_r}$. 
	By induction hypothesis, we now get  the claim for the coherent sheaves $\Ker(\psi)$ and $\Coker(\psi)$. We twist the above exact sequence with $m_1L_1+ \dots + m_rL_r$ and then we apply additivity of the Euler characteristic to the resulting exact sequence to deduce that 
	$$\chi(Y,\Fcal(m_1L_1+ \dots + m_rL_r))-\chi(Y,\Fcal(m_1L_1+ \dots + m_rL_r - L_i))$$
	is equal to $$\chi(Y,\Coker(\psi)(m_1L_1+ \dots + m_rL_r))-\chi(Y,\Ker(\psi)(m_1L_1+ \dots + m_rL_r))$$
	and hence it is a polynomial function. 
	Now we use the following fact for any function $f:\Z^r \to \R$. If we know that $f(x)-f(x-e_i)$ is a polynomial function for all $i=1, \dots, r$, then it is quite easy to see that $f(x)$ is a polynomial function. 
	This implies that $\chi(Y,\Fcal(m_1L_1+ \dots + m_rL_r))$ is a polynomial function in $(m_1, \dots, m_r)$, proving the first step.
	
	We now consider the general case. By~\cite[Theorem 13.59]{goertz-wedhorn-1}, we can find very ample line bundles $A_i,B_i$ such that $L_i=A_i - B_i$. The first step shows that
	$$\chi(Y, \Fcal(p_1A_1+q_1B_1+ \dots + p_rA_r+q_rB_r))$$
	is a polynomial function in $(p_1,q_1, \dots, p_r,q_r) \in \Z^{2r}$. Applying this with $p_i=m_i$ and $q_i=-m_i$ for all $i=1, \dots, r$, we get the claim. 
\end{proof}

\begin{lemma}
	\label{lemma uniform Serre vanishing}
	Let $Y$ be a finitely presented projective torsion scheme over $\Ko$ and let $L_1, \dots, L_r$ be ample line bundles on $Y$. 
	Then for any coherent sheaf $\Fcal$ on $Y$, we have 
	$$H^q(Y,\Fcal(m_1L_1+ \dots + m_rL_r)) = 0$$
	for all $m_1,\ldots, m_r \in \N$ with  $m_1+ \dots+ m_r$ sufficiently large and all $q>0$.
\end{lemma}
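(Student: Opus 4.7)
The plan is to adapt the proof of the preceding Proposition~\ref{multivariable Euler charactersitics}, tracking vanishing of higher cohomology in place of polynomiality of Euler characteristics. First, I reduce to the case where each $L_i$ is very ample: choose $d_i\in\N_{>0}$ with $d_iL_i$ very ample and decompose $m_i=q_id_i+s_i$ with $0\le s_i<d_i$; since $m_1+\cdots+m_r\to\infty$ forces $q_1+\cdots+q_r\to\infty$, it suffices to prove the result with $d_iL_i$ in place of $L_i$ for each of the finitely many coherent sheaves $\Fcal(s_1L_1+\cdots+s_rL_r)$. The very ample $L_i$ induce a closed immersion $\iota\colon Y\hookrightarrow Y'\coloneqq\Pbb_{\Ko}^{n_1}\times_{\Ko}\cdots\times_{\Ko}\Pbb_{\Ko}^{n_r}$ with $L_i=\iota^*\Ocal(e_i)$, and since cohomology is preserved under the closed immersion $\iota$ and $\iota_*(\Fcal\otimes\iota^*\Ocal(m))=\iota_*\Fcal\otimes\Ocal(m)$ by the projection formula, I may replace $Y$ by $Y'$ and $\Fcal$ by the coherent sheaf $\iota_*\Fcal$ (still annihilated by the same non-zero $a\in\Ko$ killing $Y$).

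I then induct on $N=n_1+\cdots+n_r$. The base case $N=0$ is immediate since $Y=\Spec(\Ko)$ is affine. For the inductive step, pick $i$ with $n_i>0$ and let $\psi\colon\Fcal(-e_i)\to\Fcal$ be multiplication by the coordinate $y_{i n_i}$. Splitting the induced four-term exact sequence into
\begin{equation*}
0\to\Ker(\psi)\to\Fcal(-e_i)\to\im(\psi)\to 0,\qquad 0\to\im(\psi)\to\Fcal\to\Coker(\psi)\to 0,
\end{equation*}
the sheaves $\Ker(\psi)$ and $\Coker(\psi)$ are coherent (by Remark~\ref{stably coherent}) and, being killed by $y_{i n_i}$, are supported on the hyperplane $\{y_{i n_i}=0\}\cong\Pbb_{\Ko}^{n_1}\times\cdots\times\Pbb_{\Ko}^{n_i-1}\times\cdots\times\Pbb_{\Ko}^{n_r}$ of total dimension $N-1$. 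The inductive hypothesis, applied on this hyperplane and transferred to $Y$ via the closed immersion, then provides a uniform threshold $M_0$ such that $H^q(Y,\Ker(\psi)(m))=H^q(Y,\Coker(\psi)(m))=0$ for $q>0$ and $m_1+\cdots+m_r\ge M_0$.

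Combined with the a priori vanishing $H^q(Y,-)=0$ for $q>N$ (from \v{C}ech cohomology on the standard affine cover of $Y$, whose nerve has dimension $N$), the two long exact cohomology sequences yield, for $m_1+\cdots+m_r$ sufficiently large, isomorphisms $H^q(Y,\Fcal(m-e_i))\cong H^q(Y,\Fcal(m))$ in degrees $q\ge 2$ and a surjection in degree $q=1$. Applied symmetrically in all coordinate directions, this shows $m\mapsto h^q(Y,\Fcal(m))$ is constant in the large region for $q\ge 2$, and classical Serre vanishing for the single ample line bundle $L\coloneqq L_1+\cdots+L_r$ (evaluated along the diagonal $m=(k,\ldots,k)=kL$) forces this constant to be zero. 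The main obstacle is degree $q=1$, where only a surjection $h^1(Y,\Fcal(m-e_i))\ge h^1(Y,\Fcal(m))$ is available and monotonicity alone, together with vanishing on the diagonal, does not suffice; this is overcome by combining the already established vanishing in degrees $\ge 2$ with the polynomiality of $\chi(Y,\Fcal(m))$ from Proposition~\ref{multivariable Euler charactersitics}, which pins down $h^0-h^1$ as a polynomial in $m$ and, together with the monotonicity and diagonal vanishing of $h^1$, forces $h^1$ to vanish uniformly for $m_1+\cdots+m_r$ large.
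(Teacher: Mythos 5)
Your overall strategy (reduce to very ample line bundles, embed into a multiprojective space $\mathbb P=\mathbb P^{n_1}_{\Ko}\times\cdots\times\mathbb P^{n_r}_{\Ko}$, and induct on its dimension using short exact sequences coming from a linear coordinate) is the same as the paper's, and you are in fact slightly more careful in one respect: you explicitly split the four-term sequence $0\to\Ker(\psi)\to\Fcal(-e_i)\to\Fcal\to\Coker(\psi)\to 0$ into two short exact sequences and apply the inductive hypothesis to both $\Ker(\psi)$ and $\Coker(\psi)$, whereas the paper only invokes the sequence $0\to\Ocal_{\mathbb P}(-e_i)\to\Ocal_{\mathbb P}\to\Ocal_{\mathbb P_i}\to 0$ and tensors with $\Fcal$, glossing over the possible non-injectivity of $\Fcal(-e_i)\to\Fcal$. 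Your argument for $q\ge 2$ (isomorphisms $H^q(\Fcal(m-e_i))\cong H^q(\Fcal(m))$ in the large region, then constancy plus classical Serre vanishing along the diagonal) is correct.

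The gap is in the $q=1$ step. You only obtain the surjection $h^1(\Fcal(m-e_i))\ge h^1(\Fcal(m))$, and you claim that polynomiality of $\chi$ together with this monotonicity and vanishing along the diagonal forces $h^1\equiv 0$ for $m_1+\cdots+m_r$ large. This does not follow: the three constraints are compatible with non-vanishing $h^1$. For instance, in rank $r=2$ take $h^1(m_1,m_2)=1$ when $m_1=0$ (and $m_2$ large) and $h^1=0$ otherwise; this is coordinatewise non-increasing, vanishes on the diagonal, and $h^0:=\chi+h^1\ge 0$ for any non-negative polynomial $\chi$, yet $h^1(0,m_2)\ne 0$ for all $m_2$. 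The polynomiality of $\chi$ gives no leverage here because $h^0$ is free to absorb the discrepancy. What actually closes the argument is the paper's \emph{minimality} device: assuming a minimal counterexample $m$ (in the product partial order, with $\max_i m_i\ge\ell$), one finds an index $i$ not realizing the max with $m_i>0$, so $m-e_i$ still has $\max\ge\ell$ and by minimality $h^q(\Fcal((m-e_i)L))=0$; then your own surjectivity $h^q(\Fcal(m-e_i))\ge h^q(\Fcal(m))$ immediately forces $h^q(\Fcal(mL))=0$. Note this treats all $q>0$ uniformly and renders both the constancy argument for $q\ge 2$ and any appeal to $\chi$ unnecessary. A secondary issue: your justification of $H^q(Y,-)=0$ for $q>N$ via the ``nerve of the standard affine cover'' is not right for a product of projective spaces (that cover has $(n_1+1)\cdots(n_r+1)$ members, so its nerve has dimension much larger than $N$), but you do not actually need that vanishing for the $q\ge 2$ argument, since the inductive hypothesis already kills $H^{q\pm1}(\Ker(\psi)(m))$ and $H^{q\pm 1}(\Coker(\psi)(m))$ for all positive degrees.
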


\begin{proof}
	 For $r=1$, this is Serre's vanishing theorem, \emph{cf.}~\cite[Proposition 3.6]{ullrich95}. 
	We  prove now the claim for $r \geq 2$. We note that there is $k \in \N\setminus \{0\}$ such that $H_j \coloneqq kL_j$ is very ample for every $j=1,\dots,r$. Writing $m_j= m_j'k + d_j$ for $m_j' \in \N$ and $d_j \in \{0,\dots,k-1\}$, we see that it is enough to prove the claim for the finitely many coherent sheaves $\Fcal(d_1L_1+\dots+d_rL_r)$ and the very ample line bundles $H_j$. We conclude that we may assume all $L_j$ very ample.
	
	 There is 
 a suitable closed immersion into a multiprojective space $\mathbb P = \mathbb P_{\Ko}^{n_1} \times \dots \times \mathbb P_{\Ko}^{n_r}$ over $\Ko$ such that  $L_j=\Ocal_{\mathbb P}(e_j)|_Y$ for $j=1,\dots, r$, where $\Ocal_{\mathbb P}(e_j)$ is the pull-back of $\Ocal_{\mathbb P_{\Ko}^{n_j}}(1)$ with respect to the $j$-th projection. We choose such an embedding with $\mathbb P$ of minimal dimension.  
 The proof will run by induction over $\dim(\mathbb P)$. If $\dim(\mathbb P)=0$, then $\dim(Y)=0$ and the claim is obvious.
 
 Now we assume $\dim(\mathbb P) \geq 1$. For every $i \in \{1,\dots,r\}$, we pick a homogeneous coordinate $x_i$ on the factor $\mathbb P_{\Ko}^{n_i}$ and set $\mathbb P_i \coloneqq \{x \in \mathbb P \mid x_i=0\}$ for the associated multiprojective space  with $\dim(\mathbb P')=\dim(\mathbb P)-1$. Then $Y_i\coloneqq Y \times_{\mathbb P} \mathbb P_i$ is a finitely presented torsion scheme over $\Ko$ which is a closed subscheme of $Y$ and of $\mathbb P_i$.  The restriction $\Fcal_i$ of $\Fcal$ to $Y'$ is a coherent sheaf on $Y'$ and the restriction  of $L_j$ to $Y_i$ is a very ample line bundle $L_{ij}$ for $j=1,\dots,r$. By induction and using that the $\ell^1$-norm of $(m_i)$ and its $\ell^\infty$-norm are equivalent,  
 there is $\ell \in \N$ such that 
 \begin{equation} \label{inductive vanishing} H^q(Y_i, \Fcal_i(m_1L_{i1}+ \dots + m_rL_{ir})) = 0
 \end{equation}
 for all $m_1, \dots, m_r \in \N$ with $\max\{m_1,\dots,m_r\} \geq \ell$ and all $i \in \{1,\dots,r\}$. Using the case $r=1$, we may choose $\ell$ so large such that $H^q(Y,\Fcal(m_jL_j))=0$ for all $m_j \geq \ell$ and all $j=1,\dots,r$. 
 We claim that for all $m_1, \dots, m_r \in \N$ with $\max\{m_1,\dots,m_r\} \geq \ell$ we have
 \begin{equation} \label{claim to show for multiprojective Serre vanishing}
 H^q(Y, \Fcal(m_1L_{1}+ \dots + m_rL_{r})) = 0
 \end{equation}  
 which implies the lemma. We argue by contradiction. We choose $m_1,\dots, m_r$ minimal with respect to the  product (partial) order on $\N^r$ such that \eqref{claim to show for multiprojective Serre vanishing} is wrong. Let $m_j$ be the maximal $m_1,\dots, m_r$. There is $i \neq j$ such that $m_i>0$, otherwise \eqref{claim to show for multiprojective Serre vanishing} would be true by the choice of $\ell$ using the case $r=1$. Since $x_i$ is a regular global section of $\Ocal_{\mathbb P}(e_i)$ (see the paragraph before Proposition  \ref{BE correction} for the definition of regular), the sequence 
 $$0 \longrightarrow \Ocal_{\mathbb P}(-e_i) \stackrel{\otimes  x_i}{\longrightarrow} \Ocal_{\mathbb P} \longrightarrow \Ocal_{\mathbb P_i} \longrightarrow 0$$
 on $\mathbb P$ is exact. Then the associated 
 long exact cohomology sequence is 
	$$\cdots \longrightarrow \underbrace{H^q(\mathbb P, \Fcal(me-e_i))}_{ H^q(Y, \Fcal(mL-L_i))} \longrightarrow \underbrace{H^q(\mathbb P, \Fcal(me))}_{H^q(Y, \Fcal(mL))} \longrightarrow \underbrace{H^q(\mathbb P_i,\Fcal(me))}_{H^q(Y_i,\Fcal_i(mL_i))}   \longrightarrow \cdots $$	
where $me \coloneqq m_1e_1+ \cdots + m_r e_r$, $mL \coloneqq m_1L_1+ \cdots + m_rL_r$ and $mL_i=m_1L_{i1}+ \cdots + m_rL_{ir}$. By \eqref{inductive vanishing}, we have $H^q(Y_i,\Fcal_i(mL_i))=0$. Using that $\max\{m_1,\dots,m_r\}$ does not change if we replace $m_i$ by $m_i-1$, minimality yields that $H^q(Y, \Fcal(mL-L_i))=0$ and hence $H^q(Y,\Fcal(mL))=0$. This is a contradiction and the lemma follows.
\end{proof}

\begin{cor}
	\label{cor hilbert polynomial}
	Let $L_1, \dots, L_r$ be ample line bundles on a finitely presented projective torsion scheme $Y$ over $\Ko$. Then for any coherent sheaf $\Fcal$ on $Y$, we have that $h^0(Y,\Fcal(m_1L_1+ \dots + m_rL_r))$ is a polynomial function of $(m_1, \dots, m_r) \in \N^r$ for $m_1+ \dots+ m_r$ large enough. 
\end{cor}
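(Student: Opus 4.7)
The proof plan is very short because this corollary is almost immediate from the two preceding results. First I would invoke Proposition~\ref{multivariable Euler charactersitics}, which tells us that the Euler characteristic
\[
\chi(Y, \Fcal(m_1 L_1 + \cdots + m_r L_r)) = \sum_{q=0}^{\dim(Y)} (-1)^q h^q\bigl(Y, \Fcal(m_1 L_1 + \cdots + m_r L_r)\bigr)
\]
is a polynomial function of $(m_1,\ldots,m_r)\in\Z^r$ (note that the ampleness of the $L_i$ is not needed for this step, only that they are line bundles).

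Next I would apply Lemma~\ref{lemma uniform Serre vanishing}: since the $L_i$ are ample, there exists $M\in\N$ such that for every $(m_1,\ldots,m_r)\in\N^r$ with $m_1+\cdots+m_r\ge M$ and every $q>0$,
\[
H^q\bigl(Y, \Fcal(m_1 L_1 + \cdots + m_r L_r)\bigr) = 0.
\]
Consequently, for such $(m_1,\ldots,m_r)$, all higher cohomology contributions to the Euler characteristic vanish, leaving
\[
h^0\bigl(Y, \Fcal(m_1 L_1 + \cdots + m_r L_r)\bigr) = \chi\bigl(Y, \Fcal(m_1 L_1 + \cdots + m_r L_r)\bigr).
\]
Combining this with the polynomiality of the right-hand side from Proposition~\ref{multivariable Euler charactersitics} yields the claim.

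There is no real obstacle; the only mild subtlety is that the polynomial equality between $h^0$ and $\chi$ only holds in the range $m_1+\cdots+m_r\ge M$, which is precisely why the statement restricts to $m_1+\cdots+m_r$ large enough rather than making the assertion on all of $\N^r$. All heavy lifting has already been done in Proposition~\ref{multivariable Euler charactersitics} (bilinearization via twists on multiprojective space) and in Lemma~\ref{lemma uniform Serre vanishing} (induction on the dimension of the ambient multiprojective space using a regular section argument), so here nothing more than bookkeeping is required.
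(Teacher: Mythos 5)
Your argument is exactly the paper's: the paper's proof simply cites Proposition~\ref{multivariable Euler charactersitics} and Lemma~\ref{lemma uniform Serre vanishing}, which you have unpacked in the same way — vanishing of higher cohomology in the range $m_1+\cdots+m_r\ge M$ identifies $h^0$ with the polynomial Euler characteristic. Correct and complete.
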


\begin{proof}
	This follows from Proposition \ref{multivariable Euler charactersitics} and Lemma \ref{lemma uniform Serre vanishing}.
\end{proof}

\subsection{Global sections in the non-Noetherian case} \label{Finding sections in the non Noetherian case}

The goal of this subsection is to prove that a line bundle on a finitely presented projective torsion $\Ko$-scheme can be written as a difference 
of effective line bundles which have global sections not vanishing on a given finite set of points of $Y$. 
This fact is established in Corollary \ref{corollary line bundle difference effective}.

\begin{art}
	\label{notations lifiting non Noetherian}
	Let $Y$ be a finitely presented  torsion $\Ko$-scheme.
	We denote by 
	$$Y_s \coloneqq Y \times_{\Spec \Ko} \Spec \Kt$$
	the {\it special fiber of $Y$}. We remark that the induced closed immersion $Y_s \to Y$ 
	induces a homeomorphism of the underlying topological spaces
	$|Y_s| \simeq |Y|$.
\end{art}

\begin{lemma}
	\label{lemma lift section 1}
	Let $Y$ be a {torsion scheme of finite type over $\Ko$}.
	Let $\Fcal$ be a {sheaf} on $Y$ and  
	let $s\in H^0(Y_s, \Fcal)$.
	Then there is some $r \in \R$ with ${0}<r< 1$ such that for any $\beta \in \Ko$ with 
	$r \leq |\beta| < 1$ the section $s$ can be lifted to  
	$H^0(Y_{\Ko/(\beta)}, \Fcal)$ via the canonical morphism 
	$ H^0(Y_{\Ko/(\beta)}, \Fcal) \to H^0(Y_s, \Fcal)$.
\end{lemma}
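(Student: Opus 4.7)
The plan is to cover $Y$ by finitely many affines, lift $s$ piecewise, and then observe that the finitely many defects to gluing are finite $\Koo$-sums whose largest absolute value will supply the required~$r$.

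By Remark~\ref{torsion for finite type} I fix a nonzero $\alpha \in \Ko$ with $\alpha\cdot\Ocal_Y = 0$, realising $Y$ as a scheme of finite type over $\Ko/(\alpha)$. In particular $Y$ is quasi-compact, so I may choose a finite open affine cover $Y = \bigcup_{i=1}^N U_i$ with $U_i = \Spec A_i$, together with a finite affine cover $U_i \cap U_j = \bigcup_k V_{ijk}$ of each pairwise overlap. Using the surjection $\Fcal(U_i) \twoheadrightarrow (\Fcal/\Koo\Fcal)(U_i)$ and the identification $\Fcal|_{Y_s} = \Fcal/\Koo\Fcal$ on the common underlying space, I lift $s|_{U_i}$ to some $t_i \in \Fcal(U_i)$. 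On each $V_{ijk}$ the difference is a \emph{finite} sum
$$ t_i|_{V_{ijk}} - t_j|_{V_{ijk}} = \sum_\ell \gamma^{ijk}_\ell \, f^{ijk}_\ell , $$
with $\gamma^{ijk}_\ell \in \Koo$ and $f^{ijk}_\ell \in \Fcal(V_{ijk})$, directly from the definition of the submodule $\Koo M \subseteq M$.

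I set $r \coloneqq \max\bigl(|\alpha|,\,\max_{i,j,k,\ell}|\gamma^{ijk}_\ell|\bigr)$; this is a finite maximum of elements in $(0,1)$, so $0 < r < 1$. For any $\beta \in \Ko$ with $r \le |\beta| < 1$, the inequality $|\alpha| \le |\beta|$ gives $(\alpha) \subseteq (\beta)$ and hence a closed immersion $Y_s \hookrightarrow Y_{\Ko/(\beta)}$, while $|\gamma^{ijk}_\ell| \le |\beta|$ yields $\gamma^{ijk}_\ell \in \beta\Ko$, whence $t_i|_{V_{ijk}} - t_j|_{V_{ijk}} \in \beta\Fcal(V_{ijk})$. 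Consequently the reductions $\bar t_i$ in $\Fcal/\beta\Fcal = \Fcal|_{Y_{\Ko/(\beta)}}$ agree on every $V_{ijk}$, hence on every $U_i \cap U_j$ by the sheaf axiom, and glue to a global section which by construction reduces to $s$ under $Y_s \hookrightarrow Y_{\Ko/(\beta)}$. The one mildly delicate point is securing the finiteness of every cover in play so that the maximum defining~$r$ is attained and strictly less than~$1$; this is precisely what the finite type structure over $\Ko/(\alpha)$ buys us.
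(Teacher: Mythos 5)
Your argument is correct when $\Fcal$ is quasi-coherent, which covers every application in the paper, but the route is genuinely different from the one in the text. The paper's proof is a short abstract argument: the topological space $|Y|$ is Noetherian (it coincides with that of the finite-type $\Kt$-scheme $Y_s$), the filtered colimit $\Ocal_{Y_s} \simeq \varinjlim_{\beta\in\Koo}\Ocal_{Y_{\Ko/(\beta)}}$, hence also $\Fcal_{Y_s}\simeq\varinjlim_\beta \Fcal_{Y_{\Ko/(\beta)}}$, is then computed sectionwise by~\cite[Chapter~II, Exercise~1.11]{Hart}, so $H^0(Y_s,\Fcal)\simeq\varinjlim_\beta H^0(Y_{\Ko/(\beta)},\Fcal)$, from which the statement is immediate and holds for an arbitrary $\Ocal_Y$-module. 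Your explicit cover-and-glue argument is more elementary and makes transparent exactly where finite type enters (finitely many $U_i$, finitely many $V_{ijk}$, hence a finite maximum), but it silently uses quasi-coherence of $\Fcal$ at two points: the surjectivity of $\Fcal(U_i)\to(\Fcal/\Koo\Fcal)(U_i)$ on the affine $U_i$ needs the vanishing $H^1(U_i,\Koo\Fcal)=0$, and identifying $(\Koo\Fcal)(V_{ijk})$ with $\Koo\cdot\Fcal(V_{ijk})$ on the affine $V_{ijk}$ is likewise a quasi-coherence statement. Both are automatic for the coherent sheaves to which the lemma is actually applied, so nothing downstream breaks, but the hypothesis should be made explicit if you present this proof. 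A minor remark: including $|\alpha|$ in the definition of $r$ plays no role in the gluing (the closed immersion $Y_s\hookrightarrow Y_{\Ko/(\beta)}$ exists for every $\beta\in\Koo$); its only function is to force $r>0$ in the degenerate case where all $\gamma^{ijk}_\ell$ vanish, so keeping it is a good idea.
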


\begin{proof}
	The isomorphism 
	$ \Kt \simeq \varinjlim_{\beta} \Ko / (\beta)$,  
	{where $\beta \in \Koo$,} 
	induces an isomorphism 
	$$\Ocal_{Y_s} \simeq \varinjlim_{\beta \in \Koo} \Ocal_{Y_{\Ko/(\beta)}}$$
	of abelian sheaves on the topological space $|Y|$. 
	Let us denote by $\Fcal_{Y_{\Ko/(\beta)}}$ (resp. $\Fcal_{Y_s}$) the pull-back of $\Fcal$ to $Y_{\Ko/(\beta)}$ (resp. $Y_s$). Then we get similarly
	$$\Fcal_{Y_s} \simeq \varinjlim_{\beta \in \Koo} \Fcal_{Y_{\Ko/(\beta)}}.$$
	By \cite[Chapter~II, Exercise~1.11]{Hart}, we get that 
	$$H^0(Y, \Fcal_{Y_s} ) \simeq \varinjlim_{\beta} H^0(Y_{\Ko/(\beta)}, \Fcal_{Y_{\Ko/(\beta)}})$$
	which proves the above result.
\end{proof}

\begin{lemma}
	\label{lemma lift section 2}
	{Let $Y$ be a projective torsion scheme over $\Ko$ with a coherent $\Ocal_Y$-module $\Fcal$ and an ample line bundle $A$. Then there is some $r\in \R$ with $0<r<1$}
	such that for any $\beta \in \Ko$ with 
	$r \leq |\beta| < 1$ and for any $m \in \N$, the morphism
	$$H^0(Y_{\Ko/(\beta)} , \Fcal(mA) ) \to H^0(Y_s, \Fcal(mA) )$$
	is surjective.
\end{lemma}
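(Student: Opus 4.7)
The plan is to reduce the claim to applying Lemma~\ref{lemma lift section 1} to finitely many sections, by invoking the finite generation of the section ring of the ample line bundle $A|_{Y_s}$ on the projective $\Kt$-scheme $Y_s$ to handle all tensor powers $mA$ simultaneously.

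First I observe that since $Y$ is projective over $\Ko$, the special fiber $Y_s$ is projective over $\Kt$, the line bundle $A|_{Y_s}$ is ample on $Y_s$, and $\Fcal|_{Y_s}$ is a coherent sheaf. By the classical theory of ample line bundles over a field, the graded $\Kt$-algebra $R^s \coloneqq \bigoplus_{m \ge 0} H^0(Y_s, A^m|_{Y_s})$ is finitely generated, and the graded $R^s$-module $M^s \coloneqq \bigoplus_{m \ge 0} H^0(Y_s, \Fcal(mA)|_{Y_s})$ is finitely generated. I pick finitely many homogeneous generators $\bar t_i \in H^0(Y_s, A^{a_i})$ of $R^s$ and $\bar u_j \in H^0(Y_s, \Fcal(b_j A))$ of $M^s$, for suitable integers $a_i, b_j \ge 0$.

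Applying Lemma~\ref{lemma lift section 1} to each of these finitely many generators and taking the maximum of the finitely many resulting thresholds, I obtain a single $r \in (0,1)$ such that whenever $r \le |\beta| < 1$, each $\bar t_i$ lifts to some $t_i \in H^0(Y_{\Ko/(\beta)}, A^{a_i})$ and each $\bar u_j$ lifts to some $u_j \in H^0(Y_{\Ko/(\beta)}, \Fcal(b_j A))$. Now for an arbitrary $m \in \N$ and any $\bar s \in H^0(Y_s, \Fcal(mA))$, write $\bar s = \sum_\nu \bar c_\nu \bar t^{I_\nu} \bar u_{j_\nu}$ as a finite $\Kt$-linear combination of products of generators whose $A$-degrees sum to $m$. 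Since the composite $\Ko \to \Ko/(\beta) \to \Kt$ is the canonical surjection, the map $\Ko/(\beta) \twoheadrightarrow \Kt$ is surjective, so I can lift each coefficient $\bar c_\nu$ to some $c_\nu \in \Ko/(\beta)$. The section $s \coloneqq \sum_\nu c_\nu t^{I_\nu} u_{j_\nu} \in H^0(Y_{\Ko/(\beta)}, \Fcal(mA))$ then restricts to $\bar s$ along the closed immersion $Y_s \hookrightarrow Y_{\Ko/(\beta)}$, because the restriction map respects both ring products and module actions.

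The main obstacle to overcome is uniformity in $m$: a naive application of Lemma~\ref{lemma lift section 1} to $\Fcal(mA)$ only yields a threshold $r_m$ depending on $m$, which could degenerate as $m \to \infty$. Ampleness of $A$ resolves this by packaging the whole infinite family $\{\Fcal(mA)\}_{m \ge 0}$ into a finitely generated graded $\Kt$-algebra and module, so that only a finite set of generating sections must be lifted individually, and the algebra/module structure propagates the lifts to all degrees.
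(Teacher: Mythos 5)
Your proof is correct and takes essentially the same route as the paper: both use finite generation of the section ring $\bigoplus_m H^0(Y_s, mA)$ and of the section module $\bigoplus_m H^0(Y_s,\Fcal(mA))$, apply Lemma~\ref{lemma lift section 1} to a finite set of generators to obtain a uniform threshold, and then propagate lifts to all degrees via the ring/module structure together with surjectivity of $\Ko/(\beta)\to\Kt$. The only difference is that you spell out the final lifting of an arbitrary section in terms of monomials and coefficients, where the paper states the resulting surjectivity more tersely.
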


\begin{proof}
	We consider the ring 
	$$R \coloneqq \bigoplus_{m \in \N} H^0(Y_s, mA)$$
	and the $R$-module 
	$$M \coloneqq \bigoplus_{m \in \N} H^0(Y_s, \Fcal(mA)).$$
	Since $A$ is ample on the projective scheme $Y_s$ over the residue field $\Kt$, it follows that $R$ is a 
	$\Kt$-algebra of finite type and that 
	$M$ is a finitely generated $R$-module \cite[Example 1.2.22]{Laz1}.

	Then we first pick  a finite set $\{r_i\}_{i\in I}$ of generators of the $\Kt$-algebra $R$.
	For   $\beta \in \Koo$ with $|\beta|<1$ close enough to $1$, Lemma \ref{lemma lift section 1} shows that we can lift all the $r_i$ to 
	$$R_\beta \coloneqq \bigoplus_{m \in \N} H^0(Y_{\Ko/(\beta)}, mA).$$
	Similarly we pick a finite set $\{m_j\}_{j \in J}$ of generators of the finite $R$-module $M$.
	For   $\beta \in \Koo$ with $|\beta|<1$ close enough to $1$, Lemma \ref{lemma lift section 1} again shows that all the $m_j$ lift to 
	$$M_\beta \coloneqq \bigoplus_{m \in \N} H^0(Y_{\Ko/(\beta)}, \Fcal(mA)).$$
	This proves that for $\beta \in \Koo$ with $|\beta|<1$  close enough to $1$ such that the above liftings are possible, the morphism
	$M_\beta \to M$ is surjective. Clearly, this proves the claim.
\end{proof}

\begin{lemma}
	\label{lemma lift section 3}
	Let $Y$ be a finitely presented projective torsion scheme over  $\Ko$.
	Let $\Fcal$ be a  coherent $\Ocal_Y$-module and let $A$ be an ample line bundle on $Y$. 
	Then there exists some $M \in \N$ such that for any integer $m \geq M$, the morphism
	$H^0(Y, \Fcal(mA) ) \to H^0(Y_s, \Fcal(mA) )$  
	is surjective.
\end{lemma}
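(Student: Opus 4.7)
The plan is to factor the restriction map through an intermediate scheme $Y_{\Ko/(\beta)}$ for some $\beta \in \Ko$ with $|\beta|<1$ close enough to $1$, using Lemma~\ref{lemma lift section 2} for the step $Y_{\Ko/(\beta)} \to Y_s$ and Serre vanishing (Lemma~\ref{lemma uniform Serre vanishing}) for the step $Y \to Y_{\Ko/(\beta)}$.

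First, I would apply Lemma~\ref{lemma lift section 2} to $Y$, $\Fcal$, and $A$ to obtain some $r \in (0,1)$ such that for every $\beta \in \Ko$ with $r \leq |\beta| < 1$ and every $m \in \N$, the restriction
\[
H^0(Y_{\Ko/(\beta)}, \Fcal(mA)) \longrightarrow H^0(Y_s, \Fcal(mA))
\]
is surjective. Since $K$ is non-trivially valued such a $\beta$ exists: in the non-discrete case $|K^\times|$ is dense in $\R_{>0}$, and in the discrete case one may take $\beta$ to be a uniformizer, in which case $Y_{\Ko/(\beta)}=Y_s$ and this surjectivity is tautological. Fix such a $\beta$.

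Next, I would consider the multiplication-by-$\beta$ endomorphism of $\Fcal$. Since $\Ocal_Y$ is a coherent sheaf of rings on the finitely presented $\Ko$-scheme $Y$ (Remark~\ref{stably coherent}), its image $\beta\Fcal$ is a coherent $\Ocal_Y$-submodule of $\Fcal$, and its cokernel is $\Fcal/\beta\Fcal \simeq \Fcal\otimes_{\Ocal_Y}\Ocal_{Y_{\Ko/(\beta)}}$, which is identified with the pushforward of $\Fcal|_{Y_{\Ko/(\beta)}}$. Tensoring with the line bundle $\Ocal(mA)$ yields the short exact sequence of coherent sheaves on $Y$
\[
0 \longrightarrow (\beta\Fcal)(mA) \longrightarrow \Fcal(mA) \longrightarrow \Fcal(mA)|_{Y_{\Ko/(\beta)}} \longrightarrow 0,
\]
whose long exact cohomology sequence contains
\[
H^0(Y, \Fcal(mA)) \longrightarrow H^0(Y_{\Ko/(\beta)}, \Fcal(mA)) \longrightarrow H^1(Y, (\beta\Fcal)(mA)).
\]

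Finally, by Serre vanishing (Lemma~\ref{lemma uniform Serre vanishing}) applied to the coherent sheaf $\beta\Fcal$ on the finitely presented projective torsion scheme $Y$ with the ample line bundle $A$, there exists $M \in \N$ such that $H^1(Y,(\beta\Fcal)(mA))=0$ for all $m\ge M$. For such $m$, the first map above is surjective, and composing with the surjectivity from Lemma~\ref{lemma lift section 2} yields the desired surjection $H^0(Y,\Fcal(mA))\twoheadrightarrow H^0(Y_s,\Fcal(mA))$. The main technical point is the identification of $\Fcal/\beta\Fcal$ with the pushforward of $\Fcal|_{Y_{\Ko/(\beta)}}$ and the coherence of $\beta\Fcal$, both of which rely on $\Ocal_Y$ being a coherent sheaf of rings in this possibly non-Noetherian setting.
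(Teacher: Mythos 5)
Your proposal is correct and takes essentially the same approach as the paper: both factor the restriction through $Y_{\Ko/(\beta)}$, apply Lemma~\ref{lemma lift section 2} for the second factor, and use Serre vanishing on the kernel sheaf (your $\beta\Fcal$, which coincides with the paper's $\Mcal$) to get surjectivity of the first factor. The only cosmetic difference is that you phrase the vanishing via $H^1$ in the long exact sequence while the paper cites Ullrich's Serre vanishing to get short-exactness of the $H^0$-sequence directly, and you realize the kernel as the image of multiplication by $\beta$ rather than by tensoring $\beta\Ocal_Y\hookrightarrow\Ocal_Y$ with $\Fcal$.
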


\begin{proof}
	First choose some non-zero $\alpha \in \Koo$ such that $Y$ is defined over $\Ko/(\alpha)$. 
	By Lemma \ref{lemma lift section 2}, there is some $r \in \R$ with $|\alpha|<r< 1$ such that for any $\beta \in \Ko$ with 
	$r \leq |\beta| < 1$ and for any $m \in \N$, the morphism
	\begin{equation}
	\label{eq surjective section 1}
	H^0(Y_{\Ko/(\beta)} , \Fcal(mA) ) \to H^0(Y_s, \Fcal(mA) )
	\end{equation}
	is surjective.
	If we tensor the surjective homomorphism $\Ocal_Y \to \Ocal_{Y_{\Ko/(\beta)}}$ of coherent $\Ocal_Y$-modules with $\Fcal$, then we get
	an exact sequence 
	$$ 0 \to  \Mcal \to \Fcal \to  \Ocal_{Y_{\Ko/(\beta)}} \otimes \Fcal \to 0$$
	of coherent $\Ocal_Y$-modules.
	Since $A$ is ample, it follows from \cite[Proposition 3.6]{ullrich95} 
	that  
	\begin{equation}
	\label{eq surjective section 2}
	0 \to  H^0(Y, \Mcal(mA) )  
	\to
	H^0(Y, \Fcal(mA) ) 
	\to 
	H^0(Y, \Ocal_{Y_{\Ko/(\beta)}} \otimes \Fcal(mA) )
	\to 0.
	\end{equation}
	is exact for $m \gg 0$. Since 
	$$
	H^0(Y, \Ocal_{Y_{\Ko/(\beta)}} \otimes \Fcal(mA) )
	\simeq 
	H^0(Y_{\Ko/(\beta)} , \Fcal(mA) )
	$$
	we get the result by combining \eqref{eq surjective section 1} and \eqref{eq surjective section 2}. 
\end{proof}

\begin{lemma}
	\label{lemma lift section 4}
	Let $T$ be a finite subset of a finitely presented projective torsion scheme $Y$ over $\Ko$.
	Let $\Fcal$ be a  
	 line bundle on $Y$ and let $A$ be an ample line bundle on $Y$. 
	Then there is $M \geq 0$ such that for any integer $m\geq M$ there 
	exists a section 
	$s \in H^0(Y, \Fcal(mA) )$ 
	{such that $s(t) \in \Fcal_t/{\mathfrak m_t}\Fcal_t$ is non-zero for any $t \in T$ where ${\mathfrak m_t}$ is the maximal ideal in $\Ocal_{Y,t}$.}
\end{lemma}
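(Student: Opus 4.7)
My plan is to reduce the statement to the special fiber $Y_s$ via the lifting Lemma~\ref{lemma lift section 3}, and then to construct the required sections on $Y_s$ using Serre vanishing on this honest projective scheme over the residue field $\Kt$.

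The first step is to check that the non-vanishing condition transfers between $Y$ and $Y_s$. By~\ref{notations lifiting non Noetherian} we have a homeomorphism $|Y_s| \simeq |Y|$, so we view $T$ as a finite subset of $Y_s$ as well. Since $\Fcal$ is a line bundle, the stalk $\Fcal_t$ is a free $\Ocal_{Y,t}$-module of rank one, and the natural map
$$\Fcal_t/\mathfrak{m}_t\Fcal_t \longrightarrow (\Fcal|_{Y_s})_t/\mathfrak{m}_t(\Fcal|_{Y_s})_t$$
is an isomorphism of one-dimensional $\kappa(t)$-vector spaces. Consequently, a section $s \in H^0(Y,\Fcal(mA))$ satisfies the non-vanishing condition of the lemma at every $t \in T$ if and only if its image $s|_{Y_s} \in H^0(Y_s, \Fcal(mA))$ does. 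So it suffices, for $m$ large enough, to exhibit such a section on $Y_s$ and then lift it via Lemma~\ref{lemma lift section 3}.

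The second step is to produce such a section on $Y_s$. Let $T_{\mathrm{red}}$ be the reduced closed subscheme of $Y_s$ with support $T$, and let $\Ical \subset \Ocal_{Y_s}$ be its ideal sheaf. Tensoring the exact sequence $0 \to \Ical \to \Ocal_{Y_s} \to \Ocal_{T_{\mathrm{red}}} \to 0$ with the invertible sheaf $\Fcal(mA)|_{Y_s}$ and taking cohomology, Serre vanishing on $Y_s$ (which is projective over $\Kt$ with $A|_{Y_s}$ ample, cf.~\cite[Proposition 3.6]{ullrich95}) gives $H^1(Y_s, \Ical \cdot \Fcal(mA)|_{Y_s}) = 0$ for all $m \geq M_1$, hence the restriction map
$$H^0(Y_s, \Fcal(mA)) \twoheadrightarrow H^0(T_{\mathrm{red}}, \Fcal(mA)|_{T_{\mathrm{red}}}) = \bigoplus_{t \in T} \Fcal_t/\mathfrak{m}_t\Fcal_t$$
is surjective. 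Choosing any element on the right whose components are all non-zero and taking a preimage produces a section $s_0 \in H^0(Y_s, \Fcal(mA))$ non-vanishing at every $t \in T$.

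Finally, Lemma~\ref{lemma lift section 3} provides an integer $M_0$ such that $H^0(Y, \Fcal(mA)) \to H^0(Y_s, \Fcal(mA))$ is surjective for all $m \geq M_0$. Setting $M \coloneqq \max(M_0, M_1)$ and lifting $s_0$ to a section $s \in H^0(Y, \Fcal(mA))$, the first step guarantees $s(t) \neq 0$ in $\Fcal_t/\mathfrak{m}_t\Fcal_t$ for every $t \in T$. There is no serious obstacle here; the only mild point is the simultaneous bookkeeping of the two thresholds $M_0$ (from lifting) and $M_1$ (from Serre vanishing), and the observation that for a line bundle the non-vanishing condition is insensitive to restriction from $Y$ to $Y_s$.
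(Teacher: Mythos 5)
Your overall structure matches the paper exactly: reduce to the special fiber $Y_s$ and lift via Lemma~\ref{lemma lift section 3}. Your first step, checking that the non-vanishing condition at $t$ is insensitive to restriction from $Y$ to $Y_s$, is correct and worth making explicit. However, your second step, where you attempt to produce the section on $Y_s$ directly, has a genuine gap. The paper instead cites \cite[Lemma 3.1.1]{BGJKM} at exactly this point, and that lemma proves something strictly stronger than what your argument yields.

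The gap is that $T$ is an arbitrary finite set of points of $Y$, not necessarily closed points. Your ``$T_{\mathrm{red}}$ = reduced closed subscheme of $Y_s$ with support $T$'' only makes sense if $T$ is closed, and the identification
\[
H^0\bigl(T_{\mathrm{red}}, \Fcal(mA)|_{T_{\mathrm{red}}}\bigr) \;=\; \bigoplus_{t\in T} \Fcal_t/\mathfrak m_t\Fcal_t
\]
requires each $t$ to be a \emph{closed} point of $Y_s$. If some $t\in T$ is non-closed, then $\overline{\{t\}}$ is positive-dimensional, $\kappa(t)$ is an infinite extension of $\Kt$, the right-hand side is an infinite-dimensional $\Kt$-vector space, and no finite-dimensional cohomology group can surject onto it; the equality and the surjectivity simply fail. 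This is not a marginal case: in the application of this lemma via Corollary~\ref{corollary line bundle difference effective} inside Lemma~\ref{lemma asymptotic easy}, the set $T$ consists of the \emph{generic} points of $\supp(\Fcal)$, which are non-closed whenever $\dim\supp(\Fcal)>0$. For non-closed $t$, ``$s(t)\neq 0$'' means that $s$ is not identically zero on $\overline{\{t\}}$, and producing a single section of $\Fcal(mA)|_{\overline T}$ that is non-zero on every irreducible component is a prime-avoidance problem; this is easy when $\Kt$ is infinite, but when $\Kt$ is finite a finite union of proper subspaces can exhaust $H^0$, so a more careful argument (essentially the content of \cite[Lemma 3.1.1]{BGJKM}) is required. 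You should either quote that lemma, as the paper does, or supply an argument that handles non-closed points and finite residue fields.
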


\begin{proof}
	Using that $Y_s$ is a projective scheme over the residue field $\Kt$,  \cite[Lemma 3.1.1]{BGJKM} yields that there is $M \geq 0$ such that for any integer $m\geq M$ there is 
	$s \in H^0(Y_s, \Fcal(mA) )$ which does not vanish at any point of $T$.
	Then the claim follows from Lemma \ref{lemma lift section 3}.
\end{proof}

\begin{corollary}
	\label{corollary line bundle difference effective}
	Let $Y$ be a finitely presented projective torsion scheme over $\Ko$.
	Let $T\subset Y$ be finite and let $L$ be a line bundle on $Y$.
	Then there are very ample line bundles $A_1, A_2$ on $Y$ with $L \simeq A_1 - A_2$ such that the line bundles $A_i$ have  global sections $s_i$ {with $s_i(t) \neq 0$} for any $t\in T$.
\end{corollary}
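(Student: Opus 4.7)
The plan is to combine a decomposition of the line bundle $L$ as a difference of two very ample line bundles with the surjectivity result of Lemma~\ref{lemma lift section 4}. The idea is that after twisting both factors of the decomposition by a sufficiently high multiple of a fixed ample line bundle, we can simultaneously achieve that both twisted factors are very ample and admit a global section not vanishing on the finite set $T$, while preserving the difference.

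More precisely, I would argue as follows. First, since $Y$ is projective, the reference to~\cite[Theorem~13.59]{goertz-wedhorn-1} already used in the proof of Proposition~\ref{multivariable Euler charactersitics} allows us to write $L \simeq B_1 - B_2$ for some very ample line bundles $B_1, B_2$ on $Y$. Fix once and for all an ample line bundle $A$ on $Y$. By the standard fact that some power of an ample line bundle is very ample on a projective scheme, for all sufficiently large $m$ the line bundle $mA$ is very ample, and consequently $A_i \coloneqq B_i + mA$ is very ample (as a sum of two very ample line bundles). Since the twists cancel,
\begin{equation*}
A_1 - A_2 = (B_1 + mA) - (B_2 + mA) = B_1 - B_2 \simeq L.
\end{equation*}

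It remains to arrange for the required non-vanishing sections. Applying Lemma~\ref{lemma lift section 4} to the coherent sheaf $\Fcal = B_i$ (with the same ample line bundle $A$ and the same finite set $T$), we obtain an integer $M_i$ such that for all $m \geq M_i$ there is a global section $s_i \in H^0(Y, B_i(mA)) = H^0(Y, A_i)$ which does not vanish at any point of $T$. Choosing $m$ at least as large as $M_1$, $M_2$, and the threshold ensuring that $mA$ is very ample, all the desired properties hold simultaneously for $A_1$ and $A_2$.

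There is no real obstacle to overcome: all the substantive work has already been done in Lemma~\ref{lemma lift section 4}, whose proof handles the delicate lifting from the special fiber via Lemmas~\ref{lemma lift section 1}--\ref{lemma lift section 3}. The only mild subtlety is to ensure that the twisting does not destroy very ampleness, which is immediate since $mA$ is very ample for $m \gg 0$ and the sum of two very ample line bundles on a projective scheme is again very ample.
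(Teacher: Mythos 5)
Your proof is correct and takes essentially the same approach as the paper: both rely on Lemma~\ref{lemma lift section 4} applied twice to produce sections not vanishing on $T$ after twisting by a high multiple of a fixed ample line bundle, and then invoke \cite[Theorem 13.59]{goertz-wedhorn-1} for very ampleness. The only (harmless) difference is that you first decompose $L \simeq B_1 - B_2$ with $B_i$ very ample and twist each $B_i$, whereas the paper skips the decomposition and directly sets $A_1 = mA + L$, $A_2 = mA$, applying the lemma with $\Fcal = L$ and $\Fcal = \Ocal_Y$.
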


\begin{proof}
	We pick an ample line bundle $A$ on $Y$.
	We apply Lemma \ref{lemma lift section 4} first with $\Fcal = L$. 
	For   $m \gg 0$, we get that $mA + L$ has a global section $s_1$ not vanishing at any point of $T$.
	Then we apply  Lemma \ref{lemma lift section 4} with $\Fcal = \Ocal_Y$. For  $m \gg 0$, we get that  
	$mA$  has a global section $s_2$ not vanishing at any point of $T$.
	Hence for  $m \gg 0 $,  
	$A_1 \coloneqq mA +L$ and $A_2 \coloneqq mA$, 
	we get  the result. Note that for $m \gg 0$ these line bundles are very ample by \cite[Theorem 13.59]{goertz-wedhorn-1}.
\end{proof}

Any global section $s$ of a line bundle $L$ over a scheme $X$ defines a closed subscheme $D$ of $X$. We call $s$ \emph{regular} if $D$ is a Cartier divisor of $X$. We will use the following relative version in case of a flat morphism $\pi:X \to Y$ of schemes. Then $s$ is called \emph{relatively regular} if $D$ is a Cartier divisor of $X$ and if $D$ is flat over $Y$.
We recall the following result from \cite[Proposition A.15]{BE}.

\begin{prop}[Boucksom--Eriksson] \label{BE correction}
	Let $\pi:X \to Y$ be a flat (finitely presented) projective morphism of schemes of finite presentation over $\Ko$ and let $L$ be a $\pi$-ample line bundle on $X$. Then $mL$ has a relatively regular section locally over $Y$ for all $m \gg 0$. 	
\end{prop}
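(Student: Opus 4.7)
The problem is local on $Y$, so we may assume $Y = \Spec A$ affine. The strategy has two parts: (i) reduce to a Noetherian base by limits, and (ii) construct the section by a general-position argument on fibers.

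For (i), since $\pi$, $X$, $L$ are of finite presentation over $\Ko$, the limit theorems of EGA~IV, \S 8 let us write $A$ as a filtered colimit of finitely generated $\Z$-subalgebras of $\Ko$ and descend the whole data to a flat projective morphism $\pi_0 \colon X_0 \to Y_0 = \Spec A_0$ with a $\pi_0$-ample line bundle $L_0$, where $A_0$ is Noetherian. Being a Cartier divisor and being flat over the base are both preserved by arbitrary base change, so a relatively regular section of $mL_0$ over an open of $Y_0$ pulls back to a relatively regular section of $mL$ over the corresponding open of $Y$. We may therefore assume $A$ itself Noetherian.

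In the Noetherian case, fix $y_0 \in Y$. For $m \gg 0$, $mL$ is $\pi$-very ample and $\pi_*(mL)$ is locally free with formation commuting with base change, so restriction to the fiber gives a surjection $H^0(X, mL) \twoheadrightarrow H^0(X_{y_0}, mL|_{X_{y_0}})$. The fiber $X_{y_0}$ is projective over the field $\kappa(y_0)$ and thus Noetherian with finitely many associated points; by prime avoidance (possibly after passing to a larger multiple of $m$ when $\kappa(y_0)$ is finite, to ensure a sufficient supply of global sections on the fiber) we obtain a section $\bar s \in H^0(X_{y_0}, mL|_{X_{y_0}})$ not vanishing at any of them. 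Lift $\bar s$ to $s \in H^0(X, mL)$ and set $D \coloneqq V(s)$. The canonical sequence
\begin{equation*}
0 \longrightarrow \Ocal_X \xrightarrow{\cdot s} \Ocal_X \longrightarrow \Ocal_D \longrightarrow 0
\end{equation*}
restricts on $X_{y_0}$ to a short exact sequence because $s|_{X_{y_0}}$ is a non-zero-divisor; combined with the local criterion of flatness applied to the flat morphism $X \to Y$, this shows that $\Ocal_D$ is flat over $\Ocal_Y$ at every point of $X_{y_0}$. Openness of the flat locus then yields an open neighborhood $U$ of $y_0$ in $Y$ on which $D \cap \pi^{-1}(U)$ is Cartier and flat over $U$, \textit{i.e.}\ $s$ is relatively regular over $\pi^{-1}(U)$.

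To upgrade the statement from a single $m \gg 0$ to all $m \gg 0$, apply this construction twice to obtain relatively regular sections $s_0 \in H^0(\pi^{-1}(U), m_0 L)$ and $s_1 \in H^0(\pi^{-1}(U), m_1 L)$ with $\gcd(m_0, m_1) = 1$ (shrinking $U$ if needed). Every sufficiently large integer is of the form $a m_0 + b m_1$ with $a, b \in \N$, and since a product of fiberwise non-zero-divisors is a fiberwise non-zero-divisor, each $s_0^a s_1^b$ is a relatively regular section of $(a m_0 + b m_1) L$ over $\pi^{-1}(U)$. The main obstacle lies in the uniformity of the non-zero-divisor condition on fibers near $y_0$, which is resolved by openness of the flat locus; the reduction to the Noetherian setting is crucial to have the finiteness of fiber-associated points at our disposal.
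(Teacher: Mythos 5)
The paper does not supply its own proof of this proposition: as indicated in the sentence preceding it, the statement is recalled from Boucksom--Eriksson \cite[Proposition A.15]{BE} without a proof. Your argument is therefore an independent reconstruction, and it is essentially correct; its main moves (Noetherian approximation via EGA~IV~\S 8, lifting a fiberwise non-zero-divisor via cohomology and base change together with prime avoidance on the Noetherian fiber, spreading out via the slicing criterion for flatness and openness of the flat locus, then passing from two coprime tensor powers to all large $m$) reproduce the standard strategy that one expects behind~\cite[Proposition A.15]{BE}.

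A few small points you may wish to tighten. In the reduction step the phrase ``write $A$ as a filtered colimit of finitely generated $\Z$-subalgebras of $\Ko$'' is garbled: one writes $\Ko = \varinjlim_i R_i$ as a filtered colimit of its finitely generated $\Z$-subalgebras and then descends $A$ (together with $X$, $L$, $\pi$, flatness, and $\pi$-ampleness) to a finitely presented $R_i$-algebra $A_0$, which is Noetherian; the intent is clear but the sentence should be rewritten. When invoking invariance under base change, ``being a Cartier divisor'' is not in general stable under base change; what is stable is the combined datum of an effective Cartier divisor $D_0$ in an $Y_0$-flat $X_0$ such that $D_0$ is itself flat over $Y_0$ --- this follows from the exactness of $0 \to \Ocal_{X_0}(-D_0) \to \Ocal_{X_0} \to \Ocal_{D_0} \to 0$ being preserved by $- \otimes_{\Ocal_{Y_0}} \Ocal_Y$ because $\Ocal_{D_0}$ is $Y_0$-flat; you should say this explicitly. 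The displayed short exact sequence should involve $mL$ (e.g.\ $0 \to \Ocal_X(-mL) \xrightarrow{\;\cdot s\;} \Ocal_X \to \Ocal_D \to 0$), not the untwisted structure sheaf. Finally, when you fix $y_0\in Y$ and invoke the surjection $H^0(X,mL)\twoheadrightarrow H^0(X_{y_0},mL|_{X_{y_0}})$, either localize so that $y_0$ becomes the closed point of $\Spec\Ocal_{Y,y_0}$ or restrict to closed $y_0$ (which suffices for the local statement); and the passage from ``for each $y_0$, $m\gg 0$'' to a uniform $m\gg 0$ uses quasi-compactness of the Noetherian $Y_0$, which is worth stating.
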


\subsection{Asymptotics}\label{sec:asymp} In this subsection, we will study the asymptotics of cohomology groups to introduce the volume of a line bundle  over a  finitely presented  projective torsion scheme $Y$ over $\Ko$. 
We will use the previous subsection to prove that the volume  increases after adding an effective line bundle. We will explain why the non-Noetherian situation makes this surprisingly hard to prove. We start with a crucial continuity result.

\begin{lemma}
	\label{lemma asymptotic easy}
	Let $\Fcal$ be a coherent sheaf  and let $L_0, \dots, L_r$ be line bundles on $Y$. 
	We set $n \coloneqq \dim(\supp(\Fcal))$. Then for all $m_1, \dots, m_r \in \N \setminus \{0\}$ and $m \coloneqq \sum_i m_i$, we have:
	\begin{equation} \label{cohomology estimate}
	h^q(Y,\Fcal(m_1L_1+ \dots + m_rL_r) ) = O(m^n) 
	\end{equation}
	\begin{equation} \label{difference estimate}
	\left| h^q (Y, 
	\Fcal(L_0+m_1L_1+ \dots + m_rL_r)
	) 
	-
	h^q (Y, \Fcal(m_1L_1+ \dots + m_rL_r)) 
	\right| = O( m^{n-1})
	\end{equation}
\end{lemma}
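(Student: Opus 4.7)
The plan is to prove both estimates simultaneously by induction on $n = \dim(\supp(\Fcal))$. For the base case ($n \leq 0$): if $\Fcal = 0$ both sides vanish; if $n = 0$, then $\supp(\Fcal)$ consists of finitely many closed points, and since line bundles are locally trivial at closed points, $\Fcal(D) \simeq \Fcal$ as $\Ocal_Y$-modules for every line bundle $D$. Furthermore, coherent sheaves with zero-dimensional support have vanishing higher cohomology. Hence $h^q(Y, \Fcal(D))$ is independent of $D$, and both \eqref{cohomology estimate} and \eqref{difference estimate} hold with right-hand sides equal to zero.

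For the inductive step, the plan is to establish a basic shift estimate
\begin{equation*}
\bigl|h^q(Y, \Fcal(E)) - h^q(Y, \Fcal(E - A))\bigr| = O(m^{n-1})
\end{equation*}
for suitable very ample line bundles $A$ and for line bundles $E$ of ``size'' $m$ in the $\Z$-span of the $L_i$. The underlying topological space $|Y|=|Y_s|$ is Noetherian, so $\supp(\Fcal)$ has only finitely many irreducible components $Z_1, \dots, Z_N$ of maximal dimension $n$. Picking a closed point $p_j \in Z_j \setminus \bigcup_{k \neq j} Z_k$ and setting $T = \{p_1, \dots, p_N\}$, Corollary~\ref{corollary line bundle difference effective} applied to each $L_i$ ($i=0,\dots,r$) produces very ample line bundles $A_i, B_i$ with $L_i = A_i - B_i$ together with global sections $s_i^A \in H^0(Y, A_i)$, $s_i^B \in H^0(Y, B_i)$ not vanishing at any $p_j$; consequently none of these sections vanishes identically on any $Z_j$. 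Multiplication by $s_i^A$ fits into a four-term exact sequence of coherent sheaves (using the coherence of $\Ocal_Y$ from Remark~\ref{stably coherent})
\begin{equation*}
0 \to \Kcal_i \to \Fcal(-A_i) \xrightarrow{\cdot s_i^A} \Fcal \to \Qcal_i \to 0,
\end{equation*}
whose kernel and cokernel are supported in $\supp(\Fcal) \cap \{s_i^A = 0\}$, a set of dimension $< n$.

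Splitting into two short exact sequences, tensoring by any line bundle $E$, and taking the long exact cohomology sequence, the induction hypothesis applied to $\Kcal_i, \Qcal_i$ yields the shift estimate for $A = A_i$ and analogously for $A = B_i$. Estimate \eqref{difference estimate} then follows from the two-step comparison (using $L_0 = A_0 - B_0$, so that $E + L_0 - A_0 = E - B_0$):
\begin{equation*}
\bigl|h^q(\Fcal(E+L_0)) - h^q(\Fcal(E))\bigr| \leq \bigl|h^q(\Fcal(E+L_0)) - h^q(\Fcal(E-B_0))\bigr| + \bigl|h^q(\Fcal(E-B_0)) - h^q(\Fcal(E))\bigr|,
\end{equation*}
with both summands $O(m^{n-1})$. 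Estimate \eqref{cohomology estimate} follows by iterating the shift bound $2m$ times — subtracting $A_j$'s ($m_j$ times for each $j$) and then adding $B_j$'s ($m_j$ times for each $j$) — to pass from $E = \sum_j m_j L_j$ to the trivial line bundle $\Ocal_Y$; the accumulated error is $O(m \cdot m^{n-1}) = O(m^n)$, and the remaining $h^q(Y, \Fcal)$ is a finite constant.

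The main obstacle is the non-Noetherianity of $\Ko$, which precludes the classical use of prime avoidance to produce non-zero-divisor sections on $\Fcal$. The workaround is to combine the Noetherianity of the topological space $|Y|$ (which limits the top-dimensional components of $\supp(\Fcal)$ to finitely many) with the explicit section-producing tools of Lemma~\ref{lemma lift section 4} and Corollary~\ref{corollary line bundle difference effective}, allowing the standard dévissage argument of algebraic geometry to be carried out over the non-Noetherian valuation ring $\Ko$.
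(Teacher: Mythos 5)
Your proof is correct and follows essentially the same dévissage strategy as the paper: induction on $n = \dim\supp(\Fcal)$, with Corollary~\ref{corollary line bundle difference effective} producing very ample decompositions whose global sections avoid the top-dimensional components of $\supp(\Fcal)$, and the inductive hypothesis controlling the kernel and cokernel of multiplication by those sections through the long exact sequences coming from the split four-term sequence. The one small wrinkle is in your iteration for \eqref{cohomology estimate}: the intermediate line bundles on the path from $\sum_j m_j L_j$ down to $\Ocal_Y$ live in the $\Z$-span of $\{A_j, B_j\}$ with mixed-sign and eventually vanishing coefficients, not in the $\Z$-span of the $L_i$ as you state the shift estimate, so the inductive hypothesis for $\Kcal_i, \Qcal_i$ must be invoked with that larger generating set (harmless, since the statement allows any finite family of line bundles); the paper avoids this by first proving \eqref{difference estimate} and then iterating it with $L_0$ taken successively equal to $L_1,\dots,L_r$, keeping all exponents $\geq 1$, and it also folds your separate $n=0$ base case into the inductive step.
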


\begin{proof}
	We note that the support of a coherent sheaf is closed. We use the shorthand notation $$\text{$\mb \coloneqq (m_1, \dots, m_r)$,  
		$\Fcal(\mb) \coloneqq \Fcal(m_1L_1+ \dots + m_rL_r)$ and $\Fcal(\mb,L_0) \coloneqq \Fcal(\mb)\otimes L_0$.}$$  
	We prove both claims simultaneously  by induction on $n \coloneqq \dim(\supp(\Fcal))$. 
	If the support is empty, then $\Fcal=0$ and the claims are obvious as the left hand sides are zero. 
	
	Now we suppose that $n \geq 0$ and that the claims are known for all coherent sheaves whose support have dimension $<n$. 
	
	By Corollary \ref{corollary line bundle difference effective}, there are line bundles $A,B$ with global sections $s_A,s_B$ such that 
	$L_0=A-B$ and the supports of $s_A$ and $s_B$ both do not contain 
	any generic point of $\supp(\Fcal)$. Let $E$ (resp. $F$) be the closed subscheme of $X$ defined by $s_A$ (resp. $s_B$).   
	We conclude that the supports of $\Fcal(\mb) |_{E}$ and of $\Fcal(\mb, A)|_E$ 
	have dimension at most $n-1$ and the same also holds for the restrictions  to $F$. 
	We get an exact sequence
	\begin{equation} \label{seq0}
	0 \longrightarrow \Gcal   \longrightarrow  \Fcal \stackrel{\otimes s_A}{\longrightarrow}   
	\Fcal(A) \longrightarrow \Fcal(A)|_{E} \longrightarrow 0
	\end{equation}
	of coherent sheaves on $Y$. We twist by $m_1L_1+ \dots + m_rL_r$ and get the exact sequence 
	\begin{equation} \label{seq1}
	0 \longrightarrow \Gcal(\mb)   \longrightarrow  \Fcal(\mb) \stackrel{\otimes s_A}{\longrightarrow}   
	\Fcal(\mb,A) \longrightarrow \Fcal(\mb,A)|_{E} \longrightarrow 0.
	\end{equation}
	By the choice of the global section $s_A$, the dimension of $\supp(\Gcal)$ is at most $n-1$. 
	By induction on $n$, we have 
	\begin{equation} \label{indhyp}
	h^q(Y,\Gcal(\mb)) = O(m^{n-1})  \quad   \text{and}  \quad     h^q (E,\Fcal(\mb,A)|_E) = O(m^{n-1}).
	\end{equation} 
	We split the exact sequence \eqref{seq0} into two short exact sequences
	\begin{equation} \label{seqsplit}
	0 \longrightarrow \Gcal   \longrightarrow  \Fcal \longrightarrow  \Hcal 
		\longrightarrow 0 \quad \text{and}\quad 
		0 \longrightarrow \Hcal   \longrightarrow     
		\Fcal(A) \longrightarrow \Fcal(A)|_{E} \longrightarrow 0.
	\end{equation}
	We twist again these two short exact sequences by $m_1L_1+ \dots + m_rL_r$ and then we use the associated  long 
	exact cohomology sequences to deduce  
	\begin{equation*} \label{ineq 1}
	-h^{q-1}(E, \Fcal(\mb,A)|_E) \leq h^q(Y , \Fcal(\mb, A) ) -
	h^q( Y, \Hcal(\mb)) 
	\leq h^q(E, \Fcal(\mb, A)|_E) 
	\end{equation*}
	and
	\begin{equation*} \label{ineq 1'}
	-h^{q}(Y, \Gcal(\mb)) \leq h^q(Y , \Hcal(\mb) ) -
	h^q( Y, \Fcal(\mb)) 
	\leq h^{q+1}(Y, \Gcal(\mb)). 
	\end{equation*}
	Using {these inequalities} and \eqref{indhyp}, we get 
	\begin{equation} \label{ineq 2} 
	h^q(Y , \Fcal(\mb, A) ) -
	h^q( Y, \Fcal(\mb))  =O(m^{n-1}).
	\end{equation}
	We apply \eqref{ineq 2} to $\Fcal' \coloneqq\Fcal(A-B)$ instead of $\Fcal$ 
	and $B$ instead of $A$ to get
	\begin{equation} \label{ineq 3}
	h^q(Y , \Fcal'(\mb, B) ) -
	h^q( Y, \Fcal'(\mb))  = O(m^{n-1}).
	\end{equation}
	Using that $\Fcal'(\mb)  \simeq \Fcal(\mb, L_0)$ and that $\Fcal'(\mb, B) \simeq \Fcal(\mb,A)$, the inequality \eqref{difference estimate} for $n$ follows easily from \eqref{ineq 2} and \eqref{ineq 3}.  
	It is clear that \eqref{cohomology estimate} follows from a repeated application of \eqref{difference estimate} by choosing $L_0$ from $L_1, \dots, L_r$.
\end{proof}

\subsection{Asymptotic cohomological functions on the real Picard group}

This subsection is inspired by the results about $\R$-divisors on reduced projective schemes over a field from~\cite[\S 3]{BGJKM}.

\begin{theo}\label{thm:asympcohom} Let $Y$ be a finitely presented projective torsion scheme over $\Ko$, of dimension $n$. For each $q=0,\dots,n$, there exists a unique function $\hh^q(Y,\cdot):\Pic(Y)_\R\to\R_{\ge 0}$ such that:
	\begin{itemize}
		\item[(i)] for any $L \in \Pic(Y)$, we have 
		$$
		\hh^q(Y,L)=\limsup_{m\to\infty}\frac{n!}{m^n}h^q(Y,mL);
		$$
		\item[(ii)] for all $t\in \R_{\ge 0}$ and $M \in \Pic(Y)_\R$, we have $\hh^q(Y,tM)=t^n\hh^q(Y,M)$; 
		\item[(iii)] the function $\hh^q(Y, \cdot)$ is continuous on any finite dimensional real subspace of $\Pic(Y)_\R$.
	\end{itemize}
	For any presentation of $M\in\Pic(Y)_\R$ as $M=\sum_i x_i L_i$ with $L_i\in\Pic(Y)$ and $x_i\in\R$, we further have
	\begin{equation}\label{equ:hhqlimsup}
	\hh^q(Y,M)=  \limsup_{m \to+\infty} \frac{n!}{m^n} h^q\left(Y, \sum_i \lfloor m x_i \rfloor L_i \right).
	\end{equation}
	If $M$ is nef, then this limsup is a limit, and $\hh^q(Y,M)=0$ for $q>0$. 
\end{theo}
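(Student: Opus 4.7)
The plan is to first define $\hh^q$ on $\Pic(Y)$ by formula (i), using the bound $h^q(Y,mL)=O(m^n)$ from \eqref{cohomology estimate} of Lemma \ref{lemma asymptotic easy} to see that the $\limsup$ is finite. Extending by (ii) to $\Pic(Y)_\Q$ amounts to checking $\hh^q(Y,kL)=k^n\hh^q(Y,L)$ for $k\in\N_{>0}$ and $L\in\Pic(Y)$, i.e., that restricting the $\limsup$ to the arithmetic progression $m\in k\N$ does not decrease it. This is guaranteed by the regularity bound $|h^q(Y,(m+1)L)-h^q(Y,mL)|=O(m^{n-1})$ coming from \eqref{difference estimate}, which makes the fluctuations of the sequence $\frac{n!}{m^n}h^q(Y,mL)$ tend to zero.

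Next I would fix a finite-dimensional $\Q$-subspace $W=\langle L_1,\dots,L_r\rangle_\Q\subset\Pic(Y)_\Q$ and show that for $\mathbf{x},\mathbf{y}\in\R^r$,
\[
\left|h^q\left(Y,\sum_i \lfloor mx_i\rfloor L_i\right)-h^q\left(Y,\sum_i \lfloor my_i\rfloor L_i\right)\right|\le C\,m^n\|\mathbf{x}-\mathbf{y}\|_\infty+O(m^{n-1})
\]
with $C$ depending only on $W$, by iterating \eqref{difference estimate} along a lattice path from $(\lfloor mx_i\rfloor)_i$ to $(\lfloor my_i\rfloor)_i$. Dividing by $m^n/n!$, the candidate map $\mathbf{x}\mapsto\limsup_m\frac{n!}{m^n}h^q(Y,\sum_i\lfloor mx_i\rfloor L_i)$ is locally Lipschitz on $\R^r$, with a constant independent of $m$. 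On rational points, homogeneity (ii) combined with the regularity from the previous paragraph identifies it with the already-defined value on $\Pic(Y)_\Q$, so it provides the required continuous extension to $W_\R$ and simultaneously verifies formula \eqref{equ:hhqlimsup}. Independence of the basis, compatibility when enlarging $W$, and uniqueness in the theorem then all follow from the density of $\Pic(Y)_\Q$ and continuity~(iii).

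For $M\in\Pic(Y)_\R$ nef, Remark \ref{R-nef limit of Q-ample} writes $M$ as the limit of ample $\Q$-classes $M_j$ inside some $W$. For each $M_j$ a positive integer multiple is an honest ample line bundle $L$, so Lemma \ref{lemma uniform Serre vanishing} gives $h^q(Y,mL)=0$ for $q>0$ and $m\gg 0$, while Corollary \ref{cor hilbert polynomial} gives that $h^0(Y,mL)$ is eventually polynomial in $m$; this shows $\hh^q(Y,M_j)=0$ for $q>0$ and that the $\limsup$ in (i) is actually a limit on these classes. Continuity propagates $\hh^q(Y,M)=0$ for $q>0$, and the uniform-in-$m$ Lipschitz bound above upgrades the ``$\limsup$ is a limit'' statement to $M$ by a triangle inequality against $M_j$. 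The main technical obstacle lies in the second step: a naive iteration of \eqref{difference estimate} along the lattice path involves intermediate coherent sheaves $\Fcal$ whose implied constants could in principle depend on $m$. The fix is to reorganize the telescoping so that \eqref{difference estimate} is invoked only with $\Fcal=\Ocal_Y$, varying just the line-bundle exponents along the path; only then is the Lipschitz constant $C$ controlled purely by $L_1,\dots,L_r$, which is what makes the extension to $\R^r$ and the downstream nef argument go through.
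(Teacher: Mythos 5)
Your proposal follows the same route as the paper, which packages the abstract steps into Theorem~\ref{thm:hh} (finiteness and homogeneity of the $\limsup$), Lemma~\ref{lem:perturbation} (the Lipschitz estimate along lattice paths), and Proposition~\ref{prop:hh} (the $\lfloor m x_i\rfloor$ formula and the ``limsup is a limit on the nef cone'' statement), and then deduces Theorem~\ref{thm:asympcohom} by checking these hypotheses for $h^q(Y,\cdot)$ via Lemma~\ref{lemma asymptotic easy}; you inline that argument. The logical structure and the tools invoked (Lemma~\ref{lemma asymptotic easy}, Corollary~\ref{cor hilbert polynomial}, Lemma~\ref{lemma uniform Serre vanishing}, Remark~\ref{R-nef limit of Q-ample}) coincide.

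One step in your final paragraph is glossed over. From the eventual polynomiality of $m\mapsto h^q(Y,mL)$ for an honest ample $L$ you deduce that $m^{-n}h^q(Y,mL)$ converges, but what the triangle inequality against $M_j\to M$ actually needs is convergence of $m^{-n}h^q\bigl(Y,\sum_i \lfloor m (x_j)_i\rfloor L_i\bigr)$ for the ample $\Q$-class $M_j=\sum_i (x_j)_i L_i$. Passing between these is the content of the Euclidean-division step in the proof of Proposition~\ref{prop:hh}(ii): write $m=aq_m+r_m$ with $aM_j$ an honest ample line bundle, observe that $\lfloor m(x_j)_i\rfloor-q_m\,a(x_j)_i$ stays bounded, and apply the Lipschitz estimate to compare the two sequences. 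Your uniform-in-$m$ Lipschitz bound controls $|f_m(\mathbf{x})-f_m(\mathbf{x}')|$ at a fixed $m$ and gives closedness of the set on which the limit holds; by itself it does not promote convergence along the arithmetic progression $m\in a\N$ to convergence along all $m$, so this extra step is genuinely needed before the approximation $M_j\to M$ can be invoked. Also, a minor remark on your second step: the telescoping along a lattice path only ever invokes \eqref{difference estimate} with $\Fcal=\Ocal_Y$ and $L_0\in\{\pm L_1,\dots,\pm L_r\}$, so there are no ``intermediate coherent sheaves'' to worry about; the uniformity of the constant in terms of $L_1,\dots,L_r$ is automatic and no reorganization is required.
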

As in the Appendix, we slightly abusively denote by $L\in\Pic(Y)_\R$ the image of $L\in\Pic(Y)$. 

\begin{proof} Set $P:=\Pic(Y)$. By \S~\ref{sec:asymp}, the function $h:P\to\R_{\ge 0}$ defined by $h(L):= n! h^q(Y,L)$ satisfies the assumptions of Theorem~\ref{thm:hh} with $s=n$. The existence and uniqueness of $\hh^q$ are thus direct consequences of Theorem~\ref{thm:hh}, while~\eqref{equ:hhqlimsup} follows from Proposition~\ref{prop:hh}. 
	
	To prove the final point, we may enlarge the set $L_1,\dots,L_r\in\Pic(Y)$ and assume that $L_1$ is ample. Then
	$$
	\sigma:=\left\{x\in\R^r\mid M(x):=\sum_i x_i L_i\text{ ample}\right\}
	$$
	is a non-empty open convex cone, and $M(x)$ is nef if and only if $x\in\overline{\sigma}$ (since $M(x)$ nef implies that $(x_1+\e)L_1+\sum_{i>1} x_i L_i$ is ample for all $\e>0$). For any $x\in\Z^r\cap\sigma$, Lemma~\ref{lemma uniform Serre vanishing} implies $h^q(Y,m M(x))=0$ for $m\gg 1$ if $q>0$, while Corollary~\ref{cor hilbert polynomial}  and Lemma \ref{lemma asymptotic easy} yield
	$$
	\hh^0(Y,M(x))=\lim_{m\to\infty}\frac{n!}{m^n}h^0(Y,m M(x)).
	$$
	The final assertion of the theorem is now a consequence of Proposition~\ref{prop:hh}. 
\end{proof}

\medskip

In the special case $q=0$, we define the \emph{volume} of $M \in \Pic(Y)_\R$ as
$$
\vol(M) \coloneqq \vol(Y,M)  \coloneqq \hat{h}^0(Y,M).
$$

\begin{lemma}
	\label{lemma_inequality_nef_cartier}
	Let $M \in \Pic(Y)_\R$ and 
	let $E$ be a line bundle associated to an effective Cartier divisor. 
	Then 
	\begin{equation} \label{volume increasing}
	\vol(M) \leq \vol(M + E).
	\end{equation}
\end{lemma}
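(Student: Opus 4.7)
My plan is to reduce the inequality to the case where $M$ is an honest (integer) line bundle, and then to prove it for integer classes by producing an injection of global sections that is compatible with the content $\length$.

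For the reduction step, I would write $M=\sum_{i=1}^r x_i L_i$ with $L_i\in\Pic(Y)$ and $x_i\in\R$. Then $M+E=\sum_i x_i L_i+E$ is presented by adjoining $L_{r+1}\coloneqq E$ with coefficient $x_{r+1}\coloneqq 1$. Applying formula~\eqref{equ:hhqlimsup} in Theorem~\ref{thm:asympcohom} to both classes with these presentations, I get
\begin{equation*}
\vol(M)=\limsup_{m\to\infty}\frac{n!}{m^n}h^0\!\left(Y,\textstyle\sum_i\lfloor mx_i\rfloor L_i\right),\quad \vol(M+E)=\limsup_{m\to\infty}\frac{n!}{m^n}h^0\!\left(Y,\textstyle\sum_i\lfloor mx_i\rfloor L_i+mE\right).
\end{equation*}
Hence it suffices to prove the termwise inequality $h^0(Y,L)\le h^0(Y,L+mE)$ for every honest line bundle $L\in\Pic(Y)$ and every $m\in\N$, and by iteration only for $m=1$.

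For the integer case, I would exploit that $E$ is an effective Cartier divisor, so the canonical section $s_E\in H^0(Y,\Ocal_Y(E))$ is locally defined by a non-zerodivisor. Multiplication by $s_E$ therefore yields an injective morphism of coherent $\Ocal_Y$-modules $\Ocal_Y\hookrightarrow\Ocal_Y(E)$; tensoring with $L$ gives an injection $L\hookrightarrow L+E$, and applying the left-exact global section functor produces an injective $\Ko$-linear map
\begin{equation*}
H^0(Y,L)\hookrightarrow H^0(Y,L+E).
\end{equation*}
Both modules are finitely presented $\Ko$-torsion modules by Remark~\ref{stably coherent}, and coherence of $\Ko$ ensures that the quotient is finitely presented as well. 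Additivity of the content (Proposition~\ref{prop exactness of length}) applied to the short exact sequence then gives
\begin{equation*}
\length\bigl(H^0(Y,L+E)\bigr)=\length\bigl(H^0(Y,L)\bigr)+\length\bigl(H^0(Y,L+E)/H^0(Y,L)\bigr)\ge\length\bigl(H^0(Y,L)\bigr),
\end{equation*}
that is, $h^0(Y,L)\le h^0(Y,L+E)$. Substituting this termwise into the limsup formulas above completes the proof.

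There is no significant obstacle here: once one realizes that content is monotone under submodule inclusion (which is essentially a consequence of the coherence of $\Ko$, already built into our foundational setup), the argument parallels the classical algebro-geometric one. The only point demanding care is the reduction from $\R$-classes to integer classes, for which~\eqref{equ:hhqlimsup} is tailor-made.
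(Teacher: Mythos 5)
Your proof is correct and follows essentially the same approach as the paper: reduce to an honest line bundle via Theorem~\ref{thm:asympcohom}, then use multiplication by the canonical regular section of $\Ocal(E)$ to get an injection on $H^0$ and deduce monotonicity of the content. The only cosmetic difference is that the paper's reduction step invokes continuity and homogeneity (parts (ii) and (iii) of Theorem~\ref{thm:asympcohom}) rather than the limsup formula~\eqref{equ:hhqlimsup}, but both rest on the same underlying result.
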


\begin{proof}
	By continuity and homogeneity of the volume on $\Pic(Y)_\R$  shown in Theorem \ref{thm:asympcohom}(ii) and (iii), we may assume that $M=L$ is a line bundle on $Y$.
	By assumption on $E$, there is a regular global section $s\in H^0(Y,E)$. For any $m\in\N$, multiplication by $s^m$  yields an injection of sheaves
	$$ 
	\cO_Y(mL) \hookrightarrow\cO_Y(m(L+E)),
	$$
	inducing an injection of $\Ko$-modules
	$$
	H^0(Y, mL) \hookrightarrow H^0\left(Y, m(L+E)\right). 
	$$
	Thus
	$$\length\left( H^0(Y, mL )\right) \leq \length \left( H^0\left(Y, m(L+E)\right)\right),
	$$
	which implies the claim.
\end{proof}

\begin{rem} \label{open question with volume for ample}
	We were not able to prove that~\eqref{volume increasing} holds in case of $M \in \Pic(Y)_\R$ and an ample line bundle $E$. It would be quite plausible that this holds and in the Noetherian case it is true. In general, the problem is to construct a regular global section of $mE$ for some non-zero $m \in  \Nbb$. 
	 Then  homogeneity of the volume  (see Theorem \ref{thm:asympcohom}(ii)) would give~\eqref{volume increasing}. 
	In Lemma~\ref{lemma nef increasing}, we will solve the problem in a special case which will be enough for our application.	
\end{rem}

\section{Volume formulas for nef models and semipositive metrics}
\label{section formula volume energy}

Let $X$ be a projective scheme of dimension $n$ over a non-Archimedean field $K$.
We denote the valuation of $K$ by $v$. 
In the first subsection, we will first prove an asymptotic volume formula for an effective vertical Cartier divisor $D$ on a model of $X$ in terms of an integral of the model function $\phi_D$ against a Monge--Amp\`ere measure.  

In \cite{BGJKM} and \cite{BE}, it was shown that the non-Archimedean volume $\vol(L,\phi,\phi')$ agrees with the energy for semipositive metrics $\phi,\phi'$ of the line bundle $L$ under certain assumptions. In fact, the additional assumptions in \cite{BGJKM} were that the underlying scheme $X$ is a normal variety  and that the non-Archimedean base field $K$ is discretely valued while in \cite{BE}, the result holds for any non-Archimedean field, but the line bundle $L$ was assumed to be ample and $X$ was assumed to be smooth.  
The goal of the second subsection is to generalize both results allowing any non-Archimedean field and not requiring $L$ to be ample.
Our proof uses the asymptotic volume formula and our previous results on torsion schemes.

\subsection{An asymptotic volume formula}

Let $K$ be any non-Archimedean field.
In this subsection, we generalize a crucial volume formula on an effective vertical Cartier divisor $D$ of a given projective $\Ko$-model $\Xcal$ from the ample to the nef case. Such a formula was obtained in \cite{BGM} by using a change of metric formula in terms of the Deligne pairing from \cite{BE}. We extend the volume formula here to nef line bundles by using a continuity argument for the volumes on  $D$ introduced in Section \ref{section volume torsion} as the effective vertical Cartier divisor $D$ is obviously a finitely presented projective torsion scheme over $\Ko$.

In this special situation, we can really prove the desired volume inequality  mentioned in Remark \ref{open question with volume for ample}:

\begin{lemma}
	\label{lemma nef increasing}
	Let ${\KX}$ be a flat projective and finitely presented scheme over $\Ko$.
	Let $L, E \in \Pic({\KX})_\R$ with $E$ nef.
	Then for any  effective vertical Cartier divisor $D$  on ${\KX}$, we have  
	$$\vol(D, L) \leq \vol(D, L + E).$$
\end{lemma}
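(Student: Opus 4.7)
The plan is to reduce to the case where $E$ is an honest ample line bundle on $\KX$, use Proposition~\ref{BE correction} to produce an effective Cartier divisor on $D$ representing a positive multiple of the class $E|_D$, and conclude via Lemma~\ref{lemma_inequality_nef_cartier}.

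For the reduction, Remark~\ref{R-nef limit of Q-ample} writes the nef class $E$ as a limit, in a fixed finite-dimensional subspace of $\Pic(\KX)_\Q$, of ample $\Q$-line bundles. Restricting to $D$, the continuity and homogeneity of $\vol(D,\cdot)$ on $\Pic(D)_\R$ (Theorem~\ref{thm:asympcohom}(ii)--(iii)) reduce the problem to the case where $E$ is an honest ample line bundle on $\KX$.

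For such an $E$, Proposition~\ref{BE correction} applied to the flat projective morphism $\pi \colon \KX \to \Spec \Ko$ produces, for some $m \gg 0$, a relatively regular section $s \in H^0(\KX, mE)$; globality follows from the fact that $\Ko$ being local, the only Zariski open of $\Spec \Ko$ containing the closed point is $\Spec \Ko$ itself. Thus $V(s)$ is a Cartier divisor on $\KX$ flat over $\Ko$. The crux of the proof is then to verify that $s|_D$ remains a regular section of $mE|_D$, so that $V(s|_D)$ is an effective Cartier divisor on $D$ in the class $mE|_D$. Working locally on an affine open $\Spec A \subset \KX$ with $V(s) = V(f)$ and $D = V(g)$, the two structural hypotheses translate to: $A/(f)$ is $\Ko$-flat (flatness of $V(s)$ over $\Ko$), and $gv = a$ for some $a \in \Ko \setminus \{0\}$ and $v \in A$ (verticality of $D$). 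A short argument then shows that $f$ is a non-zero divisor modulo $g$: from $fu = gw$, multiplying by $v$ yields $aw = fuv \in (f)$, and the $\Ko$-flatness of $A/(f)$, which makes $a$ a non-zero divisor on $A/(f)$, forces $w \in (f)$; cancelling $f$ on $A$ (where it is a non-zero divisor) then gives $u \in (g)$.

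Once $V(s|_D)$ is at our disposal as an effective Cartier divisor in the class $mE|_D$, Lemma~\ref{lemma_inequality_nef_cartier} yields $\vol(D, M) \leq \vol(D, M + mE|_D)$ for every $M \in \Pic(D)_\R$. Applied to $M = mL|_D$ and combined with the homogeneity $\vol(D, mN) = m^n \vol(D, N)$, this gives the required inequality $\vol(D, L) \leq \vol(D, L + E)$. The principal obstacle is the regularity of $s|_D$: this is exactly the point where non-Noetherianity of $\Ko$ could otherwise block the argument (cf.~Remark~\ref{open question with volume for ample}), and it is salvaged here only by combining the flatness of $V(s)$ over $\Ko$ with the vertical hypothesis on $D$.
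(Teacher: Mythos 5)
Your proof follows the same overall route as the paper: reduce by continuity and homogeneity of $\vol(D,\cdot)$ to $E$ an honest ample line bundle, invoke Proposition~\ref{BE correction} to get a relatively regular section $s$ of $mE$, verify $s|_D$ is a regular section of $mE|_D$, and conclude with Lemma~\ref{lemma_inequality_nef_cartier}. The one place you diverge is step three: where the paper cites~\cite[Lemma~1.3]{BGM} for the regularity of $s|_D$, you supply a direct commutative-algebra argument. Your argument is correct — from $fu=gw$, multiplication by $v$ (with $gv=a\in \Ko\setminus\{0\}$, using verticality of $D$ and $\Ko$-flatness of $A$ to clear torsion) gives $aw\in(f)$, and since $A/(f)$ is $\Ko$-flat and hence torsion-free over the domain $\Ko$, one gets $w\in(f)$ and then cancels $f$ in $A$ — and it has the virtue of isolating exactly which two structural hypotheses (flatness of $V(s)$ over $\Ko$ and verticality of $D$) rescue the argument in the non-Noetherian setting, precisely the obstruction flagged in Remark~\ref{open question with volume for ample}. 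Your observation that $\Ko$ being local upgrades the ``locally over $Y$'' conclusion of Proposition~\ref{BE correction} to a global section is also a detail the paper leaves implicit. So: same strategy, with one step made self-contained rather than cited.
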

\begin{proof}
	By continuity and homogeneity of the volume (\emph{cf.}~Theorem~\ref{thm:asympcohom}), we may assume that $E\in \Pic(\KX)$ is ample and that $L \in \Pic({\KX})$. From Proposition \ref{BE correction}, we deduce that for $m\gg0$ the line bundle $mE$ has a relatively regular section $s$.
	Hence, replacing $E$ by $mE$ and $L$ by $mL$  and using homogeneity of the volume again, we can assume that $E$ has a relatively regular section $s$. 
	It follows from \cite[Lemma 1.3]{BGM} that $s|_D$ is a regular section of $E|_D$. We recall that a regular section has an associated effective Cartier divisor $\div(s|_D)$ and $E|_D$ is isomorphic to $\Ocal(\div(s|_D))$. Then the claim follows from Lemma \ref{lemma_inequality_nef_cartier}. 
\end{proof}

In the following, we consider a flat projective $\Ko$-model $\Xcal$ of the $n$-dimensional projective scheme $X$ over $K$. Let $D$ be an effective vertical Cartier divisor on $\Xcal$. For any continuous function $f:\Xan \to \R$, the integral 
\begin{equation*} \label{extend MA integral to real Picard group}
\int_\Xan f \, dd^c\phi_{\Lcal_1} \wedge \dots \wedge dd^c\phi_{\Lcal_n}
\end{equation*}
is multilinear in $\Lcal_1, \dots, \Lcal_n \in \Pic(\Xcal)$ and hence  extends canonically to a multilinear function on $\Pic(\Xcal)_\R$. 

\begin{prop} \label{volume formula for real nef}
	Let $\Lcal  \in \Pic(\Xcal)_\R$ be nef and let $\phi_D$ be the model function associated to the effective vertical Cartier divisor $D$. Then we have 
	$$\vol(D,\Lcal)= \int_\Xan \phi_D \, (dd^c \phi_\Lcal)^n.$$	
\end{prop}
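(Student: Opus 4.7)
The plan is to reduce to the already established ample case of \cite[Theorem 2.4]{BGM} via an approximation and continuity argument. Both sides of the claimed identity are homogeneous of degree $n$ in $\Lcal$: the LHS by Theorem~\ref{thm:asympcohom}(ii), and the RHS because multiplying $\Lcal$ by $k \in \N_{>0}$ replaces $\phi_\Lcal$ by $k\phi_\Lcal$ and hence $(dd^c \phi_\Lcal)^n$ by $k^n (dd^c \phi_\Lcal)^n$. Consequently, the BGM identity for honest ample line bundles extends automatically to every ample $\Q$-line bundle in $\Pic(\Xcal)_\Q$.

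Given a nef $\Lcal \in \Pic(\Xcal)_\R$, Remark~\ref{R-nef limit of Q-ample} furnishes a finite dimensional subspace $W \subset \Pic(\Xcal)_\Q$ together with a sequence of ample $\Q$-classes $\Lcal_k \in W$ such that $\Lcal_k \to \Lcal$ inside the finite dimensional subspace $W_\R + \R\Lcal$ of $\Pic(\Xcal)_\R$. The formula holds for each $\Lcal_k$ by the previous paragraph, so it suffices to check that both sides of the identity depend continuously on $\Lcal$ on any finite dimensional real subspace of $\Pic(\Xcal)_\R$.

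For the LHS, this is precisely the content of Theorem~\ref{thm:asympcohom}(iii) applied to the finitely presented projective torsion $\Ko$-scheme $D$, combined with the observation that restriction $\Pic(\Xcal)_\R \to \Pic(D)_\R$ is $\R$-linear and hence continuous. For the RHS, the multilinear extension of the Monge--Amp\`ere integral to $\Pic(\Xcal)_\R^n$ noted before the statement of the proposition yields, upon choosing a basis $H_1,\dots,H_r$ of a finite dimensional subspace and writing $\Lcal = \sum_i x_i H_i$, the polynomial expression
\begin{equation*}
\int_\Xan \phi_D \, (dd^c \phi_\Lcal)^n = \sum_{i_1,\dots,i_n} x_{i_1}\cdots x_{i_n} \int_\Xan \phi_D \, dd^c\phi_{H_{i_1}}\wedge\cdots\wedge dd^c\phi_{H_{i_n}},
\end{equation*}
which is obviously continuous in $(x_1,\dots,x_r)$. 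Passing to the limit $k \to \infty$ in the ample case $\vol(D,\Lcal_k) = \int_\Xan \phi_D \,(dd^c\phi_{\Lcal_k})^n$ then yields the desired identity for $\Lcal$.

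The delicate ingredient here is the continuity of the LHS on finite dimensional real subspaces of $\Pic(D)_\R$; this is the reason Section~\ref{section volume torsion} was devoted to extending Hilbert--Samuel theory to the non-Noetherian torsion setting via the content $\length$ and the asymptotic cohomological functions $\hat{h}^q$. Once Theorem~\ref{thm:asympcohom} is in hand, the present proposition becomes a clean approximation argument. Note that Lemma~\ref{lemma nef increasing} is not needed for this proof, but it reassures us that the monotonicity expected of volumes along nef directions is preserved in our non-Noetherian framework.
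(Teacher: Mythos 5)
Your proposal is correct and follows essentially the same route as the paper's proof: reduce to the ample case of \cite[Theorem 2.4]{BGM}, extend to ample $\Q$-classes by homogeneity, then approximate a nef $\Lcal\in\Pic(\Xcal)_\R$ by ample classes in a finite-dimensional subspace via Remark~\ref{R-nef limit of Q-ample} and pass to the limit. You spell out the continuity of both sides (Theorem~\ref{thm:asympcohom}(iii) for the volume and multilinearity for the Monge--Amp\`ere integral) a bit more explicitly than the paper does, which is a welcome clarification rather than a divergence.
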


\begin{proof}
	If $\Lcal \in \Pic(\Xcal)$ is ample, then this is proven in \cite[Theorem 2.4]{BGM}. By homogeneity, we conclude that the claim holds for ample line bundles in $\Pic(\Xcal)_\Q$. In general, 
	Remark \ref{R-nef limit of Q-ample} yields a finite dimensional subspace $W$ of $\Pic(\Xcal)_\Q$ such that the nef  $\Lcal$ is a limit of ample line bundles in $W$ and hence the claim follows from the previous case by continuity. 
\end{proof}

\begin{prop} \label{asymptotic length formula for nef}
	Let $\Xcal$ be a flat projective $\Ko$-model of the $n$-dimensional projective scheme $X$ over $K$. Let $D$ be an effective vertical Cartier divisor on $\Xcal$, let  $\Lcal_0, \dots, \Lcal_r$ be in $\Pic(\Xcal)$ and assume that $\Lcal_1, \dots, \Lcal_r$ are nef. Then we have 
	$$h^0(D,\Lcal_0+m_1 \Lcal_1 + \dots + m_r \Lcal_r) = 
	\frac{1}{n!} \, \int_{X^{\an}} \phi_D \, \left( dd^c \phi_{m_1 \Lcal_1 + \dots + m_r \Lcal_r} \right)^n  + o(m^{n})$$
	for $m_1, \dots, m_r \in \N$ and $m \coloneqq m_1+ \dots + m_r \to \infty$. 	If all  $\Lcal_1, \dots, \Lcal_r$ are ample, then the above asymptotic formula holds even with $o(m^n)$ replaced by $O(m^{n-1})$. 
\end{prop}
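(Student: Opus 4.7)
First, the difference estimate \eqref{difference estimate} of Lemma~\ref{lemma asymptotic easy} absorbs $\Lcal_0$: one has $|h^0(D,\Lcal_0+\sum_i m_i\Lcal_i)-h^0(D,\sum_i m_i\Lcal_i)|=O(m^{n-1})$, which is both $o(m^n)$ and $O(m^{n-1})$. So I may assume $\Lcal_0=0$ throughout.

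For the ample case, Lemma~\ref{lemma uniform Serre vanishing} yields $h^q(D,\sum_i m_i\Lcal_i)=0$ for $q>0$ once $m\coloneqq\sum_i m_i$ is large, hence $h^0=\chi$ in this range. By Proposition~\ref{multivariable Euler charactersitics}, $\chi(D,\sum_i m_i\Lcal_i)=P(m_1,\dots,m_r)$ is a polynomial of degree $\leq n$, with top-degree homogeneous part $P^{(n)}$ satisfying $P=P^{(n)}+O(m^{n-1})$. I would identify $P^{(n)}$ by polarization: for fixed $m\in\N^r$, scaling by $t\in\N$, $P^{(n)}(m)=\lim_{t\to\infty}t^{-n}P(tm)=\lim_{t\to\infty}t^{-n}h^0(D,t\sum_i m_i\Lcal_i)=\frac{1}{n!}\vol(D,\sum_i m_i\Lcal_i)$ by definition of the volume. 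Proposition~\ref{volume formula for real nef} then gives $P^{(n)}(m)=\frac{1}{n!}\int_{\Xan}\phi_D\,(dd^c\phi_{\sum_i m_i\Lcal_i})^n$ pointwise on $\N^r$, hence as polynomials, yielding the ample case with $O(m^{n-1})$ error.

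For the nef case, I would approximate by ample: fix an ample $A\in\Pic(\Xcal)$, so $\Lcal_i^{(k)}\coloneqq k\Lcal_i+A$ is ample by Proposition~\ref{sum of R-nef and R-ample} for each $k\geq 1$. Applying the ample case above to $\Lcal_1^{(k)},\dots,\Lcal_r^{(k)}$ gives, for each fixed $k$ and $m\to\infty$,
\[
h^0(D,k\sum_i m_i\Lcal_i+mA)=\frac{1}{n!}\int_{\Xan}\phi_D\,(dd^c\phi_{k\sum_i m_i\Lcal_i+mA})^n+O_k(m^{n-1}).
\]
Expanding the integrand by multilinearity of $dd^c$ isolates the top-order term $\frac{k^n}{n!}\int_{\Xan}\phi_D\,(dd^c\phi_{\sum_i m_i\Lcal_i})^n$, with remainder $O(k^{n-1}m^n)$. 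A regular section of a suitable multiple of $A|_D$ coming from Proposition~\ref{BE correction}, as in the proof of Lemma~\ref{lemma nef increasing}, gives the monotonicity $h^0(D,k\sum_i m_i\Lcal_i)\leq h^0(D,k\sum_i m_i\Lcal_i+mA)$. Writing a general tuple $n_i=km_i+r_i$ with $0\leq r_i<k$ and iterating \eqref{difference estimate}, one bounds $|h^0(D,\sum_i n_i\Lcal_i)-h^0(D,k\sum_i m_i\Lcal_i)|=O_k(n^{n-1})$, and combining yields an upper bound $h^0(D,\sum_i n_i\Lcal_i)\leq\frac{1}{n!}\int_{\Xan}\phi_D\,(dd^c\phi_{\sum_i n_i\Lcal_i})^n+O(n^n/k)+O_k(n^{n-1})$, which for any $\varepsilon>0$ is at most $\frac{1}{n!}\int_{\Xan}\phi_D\,(dd^c\phi_{\sum_i n_i\Lcal_i})^n+\varepsilon n^n$ once $k$ is large and then $n$ is large.

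The main obstacle will be the matching lower bound, since the naive symmetric perturbation is unavailable (one cannot subtract $mA$ with impunity). My plan is to use the continuity of $\vol(D,\cdot)=\hh^0(D,\cdot)$ on the finite-dimensional subspace $\langle\Lcal_1,\dots,\Lcal_r\rangle_\R\subset\Pic(D)_\R$ from Theorem~\ref{thm:asympcohom}(iii), combined with a finite-covering argument on the compact simplex $S\coloneqq\{x\in\R^r_{\geq 0}:\sum_i x_i=1\}$: choose finitely many rational vertices $y^{(j)}\in S$ of common denominator $N$ so that $x\mapsto\vol(D,\sum_i x_i\Lcal_i)$ varies by less than $\varepsilon$ on $\delta$-balls around the $y^{(j)}$; at each $y^{(j)}$, the one-variable limit applied to the honest line bundle $N\sum_i y_i^{(j)}\Lcal_i\in\Pic(\Xcal)$, together with Proposition~\ref{volume formula for real nef}, delivers the pointwise convergence $\frac{n!}{m^n}h^0(D,\sum_i m_i\Lcal_i)\to\vol(D,\sum_i y_i^{(j)}\Lcal_i)$ along the ray $(m_1/m,\dots,m_r/m)=y^{(j)}$, while the upper bound above controls the deviation at nearby $x$. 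Matching these yields the desired $o(m^n)$ estimate uniformly in $(m_1,\dots,m_r)$.
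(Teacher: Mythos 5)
Your treatment of $\Lcal_0$ via the difference estimate and your argument in the ample case are both correct and close in spirit to the paper; for the ample case the paper actually derives the $O(m^{n-1})$ conclusion \emph{a posteriori} from the nef case (using polynomiality to upgrade $o(m^n)$), whereas you give a self-contained polarization argument, but both are sound. In the nef case your route differs from the paper's: the paper runs a single contradiction argument, extracting from a hypothetical bad sequence of directions a convergent subsequence $m^{(k)}_i/m^{(k)}\to x_i$, then feeds the limit direction $x$ into Theorem~\ref{thm:asympcohom} (which already packages, via Appendix~A, the convergence along rays for nef classes of $\Pic(D)_\R$) and Proposition~\ref{volume formula for real nef}, reconciling $m^{(k)}$ with $\lfloor m^{(k)}x\rfloor$ via Lemma~\ref{lem:perturbation}. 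Your plan re-derives part of this by an explicit perturbation with an ample $A$ for the upper bound and a finite covering of the simplex for the lower bound, which is workable but considerably longer; it does not buy more than the paper's compactness argument.

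There is, however, a genuine gap in your lower bound. You write that ``the upper bound above controls the deviation at nearby $x$,'' but an upper bound for $h^0$ cannot bound from below the quantity $h^0(D,\sum_i m_i\Lcal_i)$ when the direction $(m_1/m,\dots,m_r/m)$ drifts away from the chosen rational vertex $y^{(j)}$; it only tells you $h^0$ cannot be too large. What is actually required to transfer the one-variable limit at $y^{(j)}$ to nearby integer points is a \emph{two-sided} estimate of the form
\[
\left|h^0\Bigl(D,\sum_i m_i\Lcal_i\Bigr)-h^0\Bigl(D,\sum_i \lfloor m\,y^{(j)}_i\rfloor\Lcal_i\Bigr)\right|\le C\,|m-my^{(j)}|\,m^{n-1},
\]
i.e.\ exactly Lemma~\ref{lem:perturbation}, the same iterated difference estimate you already used (implicitly) on the upper-bound side. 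With this substitution your covering argument closes, the lower bound matches the upper bound up to $\varepsilon m^n$ once $\delta$ is small and $N$ large, and the proof becomes a valid (if more laborious) alternative to the paper's.
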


\begin{proof}	
	By Lemma \ref{lemma asymptotic easy}, it is enough to consider the case $\Lcal_0= \Ocal_\Xcal$.
	Then we deal first with the case where $\Lcal_1, \dots, \Lcal_r$ are nef.	We argue by contradiction. If the claim is not true, then  there is $\varepsilon >0$ such that for any $i \in \{1, \dots, r\}$ there is  a sequence $(m_i^{(k)})_{k \in \N}$ in $\N$ with $m^{(k)}= m_1^{(k)}+ \dots + m_r^{(k)} \to \infty$ and 
	\begin{equation} \label{from contradiction assumption}
	\left| h^0(D,m_1^{(k)} \Lcal_1 + \dots + m_r^{(k)} \Lcal_r) -
	\frac{1}{n!} \, \int_{X^{\an}} \phi_D \, \left( dd^c \phi_{m_1^{(k)} \Lcal_1 + \dots + m_r^{(k)} \Lcal_r} \right)^n  \right| \geq \varepsilon \cdot(m^{(k)})^n
	\end{equation}
	for all $k \in \N$. Passing to a subsequence, we may assume that $m_i^{(k)}/m^{(k)}$ converges to  a non-negative $x_i \in \R$. 
	We deduce from Theorem~\ref{thm:asympcohom} that 
	\begin{equation} \label{nef limit formula again}
	\vol(D, x_1 \Lcal_1 + \dots + x_r\Lcal_r)	=  \lim_{m \to \infty} \frac{h^0\left(D,  \sum_{i=1}^r \lfloor m x_i \rfloor \Lcal_i \right)}{m^n/n!}.
	\end{equation}
	We pick any $\delta>0$.  Then  we have $|m_i^{(k)}-x_i m^{(k)}| \leq \delta m^{(k)}$ for $k \gg 0$
	and Lemma 
	\ref{lem:perturbation} yields 
	$$h^0\left(D,m_1^{(k)} \Lcal_1 + \dots + m_r^{(k)} \Lcal_r\right) - h^0\left(D, \lfloor m^{(k)}x_1 \rfloor \Lcal_1 + \dots + \lfloor m^{(k)}x_r \rfloor \Lcal_r\right) = \delta \cdot O\left((m^{(k)})^{n}\right)$$
	and hence \eqref{nef limit formula again} leads to 
	$$\left(m^{(k)}\right)^{-n}  h^0\left(D,m_1^{(k)}\Lcal_1 + \dots + m_r^{(k)} \Lcal_r\right) - \frac{1}{n!} \vol(D, x_1 \Lcal_1 + \dots + x_r\Lcal_r)= \Ocal(\delta)$$
	for $k$ sufficiently large. By Proposition \ref{volume formula for real nef}, we may replace $\vol(D, x_1 \Lcal_1 + \dots + x_r\Lcal_r)$ by 
	$$ \int_{X^{\an}} \phi_D \, \left( dd^c \phi_{x_1 \Lcal_1 + \dots + x_r \Lcal_r} \right)^n.$$
	If we choose $\delta$ sufficiently small, we get a contradiction to \eqref{from contradiction assumption}.
	
	If $\Lcal_1, \dots, \Lcal_r$ are ample, then Corollary \ref{cor hilbert polynomial} and Lemma \ref{lemma asymptotic easy} show  that $h^0\left(D,\Lcal_0+m_1 \Lcal_1 + \dots + m_r \Lcal_r\right)$ is a polynomial function of degree at most $n$  in $m_1,\dots,m_r$ for $m \gg 0$. Since  
	$$\int_{X^{\an}} \phi_D \, \left( dd^c \phi_{m_1^{(k)} \Lcal_1 + \dots + m_r^{(k)} \Lcal_r} \right)^n$$
	is also a polynomial function of degree at most $n$, the difference of the two functions is not only of order $o(m^n)$, but even of order $O(m^{n-1})$. 
\end{proof}

\subsection{Comparison between energy and non-Archimedean volume}

The following easy filtration argument will be applied several times.

\begin{lemma} \label{filtration lemma}
	Let $\KX$ be a flat proper scheme over $\Ko$ with a line bundle $\KM$ and an effective vertical Cartier divisor $E$. Then we have
	$$\length \left( \frac{
		H^0(\KX,  \KM + m \Ocal(E) )
	}{
		H^0(\KX,\KM ) 
	} \right) 
	\leq \sum_{i=1}^m \length \left( H^0(E,\KM + i \Ocal(E)) \right)$$
	for any $m \in \N$.
\end{lemma}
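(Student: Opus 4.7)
The plan is to establish the estimate by a straightforward filtration argument using the canonical section of $\Ocal(E)$, reducing everything to the short exact sequences that define $E$.

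Since $E$ is effective, the line bundle $\Ocal(E)$ carries a canonical global section $s$ vanishing along $E$. For each $i \geq 1$, multiplication by $s$ yields a short exact sequence of sheaves on $\KX$,
$$
0 \to \Ocal_{\KX}(\KM + (i-1)\Ocal(E)) \xrightarrow{\,\cdot s\,} \Ocal_{\KX}(\KM + i\Ocal(E)) \to \Ocal_{E}(\KM + i\Ocal(E)) \to 0,
$$
whose associated long exact cohomology sequence is left-exact and gives, in particular, an embedding
$$
Q_i \coloneqq \frac{H^0(\KX, \KM + i\Ocal(E))}{H^0(\KX, \KM + (i-1)\Ocal(E))} \hookrightarrow H^0(E, \KM + i\Ocal(E)).
$$
Iterating for $i=1,\dots,m$ exhibits $H^0(\KX, \KM)$ inside $H^0(\KX, \KM + m\Ocal(E))$ with successive quotients $Q_i$.

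To make sense of the contents involved, recall that by Ullrich's direct image theorem (Remark~\ref{stably coherent}) each $H^0(\KX, \KM + i\Ocal(E))$ is a finitely presented $\Ko$-module, and since $E$ is a proper finitely presented torsion $\Ko$-scheme (as a vertical Cartier divisor of the flat $\Ko$-scheme $\KX$), each $H^0(E, \KM + i\Ocal(E))$ is a finitely presented \emph{torsion} $\Ko$-module. Coherence of $\Ko$ implies that the finitely generated submodule $Q_i$ of $H^0(E, \KM + i\Ocal(E))$ is itself a finitely presented torsion $\Ko$-module, so its content $\length(Q_i)$ is well defined. Moreover, the exactness of content stated in Proposition~\ref{prop exactness of length} applied to the embedding $Q_i \hookrightarrow H^0(E, \KM + i\Ocal(E))$ yields
$$
\length(Q_i) \leq \length\bigl(H^0(E, \KM + i\Ocal(E))\bigr).
$$

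Finally, additivity of the content on the successive filtration of $H^0(\KX,\KM)$ inside $H^0(\KX,\KM + m\Ocal(E))$ (again Proposition~\ref{prop exactness of length}) gives
$$
\length\!\left(\frac{H^0(\KX, \KM + m\Ocal(E))}{H^0(\KX,\KM)}\right) = \sum_{i=1}^m \length(Q_i) \leq \sum_{i=1}^m \length\bigl(H^0(E, \KM + i\Ocal(E))\bigr),
$$
which is the desired inequality. The only non-formal point is the verification that all modules appearing in the filtration are finitely presented and that the successive quotients $Q_i$ lie in the torsion world; this is precisely where coherence of $\Ko$ combined with the direct image theorem is used, so no Noetherian hypothesis is needed.
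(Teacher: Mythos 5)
Your proof follows the paper's argument exactly: both filter $H^0(\KX,\KM+m\Ocal(E))$ via multiplication by the canonical section $s_E$, obtain the embedding of each successive quotient $Q_i$ into $H^0(E,\KM+i\Ocal(E))$ from the left-exactness of the long exact cohomology sequence associated to the restriction sequence, and sum the resulting inequalities using additivity of content. The only difference is cosmetic: you spell out more explicitly why each $Q_i$ is a finitely presented torsion $\Ko$-module (coherence of $\Ko$ plus the direct image theorem), a point the paper handles by appealing to the virtual content of lattice quotients from \ref{virtual length}; both routes are valid and essentially equivalent.
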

Let $X$ be the generic fiber of $\KX$ and let $M= \Mcal|_X$. Recall from Remark \ref{lattice induced by model} that on the left hand side we have a quotient of two lattices in $H^0(X,M)$ with content $\ell$  defined by \ref{virtual length}.

\begin{proof}
	We may assume $m>0$. 
	Then for every $i \in \{1, \ldots m\}$, multiplication with the canonical global section $s_E$ leads to a short exact sequence 
	$$ 0 \longrightarrow \Ocal_{\KX} \stackrel{\cdot s_E}{\longrightarrow} \Ocal(E)    \longrightarrow \Ocal(E)|_E\to 0$$
	inducing a short exact sequence
	$$ 0 \longrightarrow \KM + (i-1) \Ocal(E) \longrightarrow \KM + i\Ocal(E) \longrightarrow (\KM + i\Ocal(E))|_{E}\to 0.$$
	The start of the corresponding long exact cohomology sequence is
	$$0 \longrightarrow H^0({\KX}, \KM + (i-1)\Ocal(E)) \longrightarrow H^0({\KX},  \KM + i\Ocal(E) )\longrightarrow H^0( E  , \KM +i\Ocal(E)) \longrightarrow \ldots $$
	and hence we get
	$$\length \left( \frac{
		H^0({\KX},  \KM +i\Ocal(E) )
	}{
		H^0({\KX}, \KM + (i-1)\Ocal(E)) 
	} \right)
	\leq 
	\length(  H^0( E  , \KM +i\Ocal(E)) ),
	$$
	leading to
	$$\length \left( \frac{
		H^0({\KX},  \KM + m\Ocal(E))
	}{
		H^0({\KX}, \KM ) 
	} \right)
	\leq 
	\sum_{i=1}^{m}\length(  H^0( E  , \KM +i \Ocal(E)) )$$
	and proving the claim.	
\end{proof}

In the following result, we will use the content $\length(\Vcal_1/\Vcal_2)$ of the virtual quotient of two lattices $\Vcal_1,\Vcal_2$ in the same $K$-vector space. We refer to \ref{virtual length} for the definition.

We first deal with the model case. 

\begin{prop}
	\label{prop energy 2'}
	Let $L$ be {a} line bundle on the projective scheme $X$ and let $\KX$ be 
	a projective model of $X$.  
	We consider  
	{nef} models $\KL_1$ and $\KL_2$ of $L$ on $\Xcal$ and 
	we write $\KL_1-\KL_2 = \Ocal(D)$ for 
	some  vertical 
	Cartier divisor $D$ on $\KX$.
	In addition, let $\Mcal$ be a line bundle on $\KX$ with generic fibre $M \coloneqq \Mcal_{|X}$. Then we have 
	\begin{align*}
	E(L, \phi_{\KL_1}, \phi_{\KL_2}) 
	=  
	\lim_{m\to 0} \frac{n!}{m^{n+1}}  
	\length 
	\left( \frac {H^0(\Xcal,\Mcal + m{\KL_1})} {H^0(\Xcal,\Mcal + m {\KL_2})} \right).
	\end{align*}
\end{prop}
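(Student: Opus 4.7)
The strategy is to express both sides as the double integral
$$\int_0^1\!\int_{\Xan}\phi_D\,(dd^c\phi_{\KL(t)})^n\,dt,\qquad \KL(t):=(1-t)\KL_2+t\KL_1.$$
On the analytic side, starting from Definition~\ref{recall energy} and using $\phi_{\KL_1}-\phi_{\KL_2}=\phi_D$, I would expand $(dd^c\phi_{\KL(t)})^n = (dd^c\phi_{\KL_2} + t\,dd^c\phi_D)^n$ binomially, integrate in $t$ (so that the coefficient of $(dd^c\phi_D)^j\wedge(dd^c\phi_{\KL_2})^{n-j}$ becomes $\tfrac{1}{j+1}\binom{n}{j}$), and recover $E(L,\phi_{\KL_1},\phi_{\KL_2})$ from the hockey-stick identity $\sum_{k=j}^n\binom{k}{j}=\binom{n+1}{j+1}$.

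For the algebraic side, I would first assume that $D$ is effective. Interpolating between $m\KL_2$ and $m\KL_1$ through the nef models $(m-i)\KL_2+i\KL_1$ for $i=0,\dots,m$, Lemma~\ref{filtration lemma} applied with $E=D$ and with $\KM$ replaced by $\Mcal+m\KL_2$ yields
$$\length\!\left(\frac{H^0(\Xcal,\Mcal+m\KL_1)}{H^0(\Xcal,\Mcal+m\KL_2)}\right)\le\sum_{i=1}^m h^0\!\bigl(D,(\Mcal+(m-i)\KL_2+i\KL_1)|_D\bigr).$$
Proposition~\ref{asymptotic length formula for nef}, applied to the effective vertical Cartier divisor $D$ and to the nef line bundles $\KL_1,\KL_2$, evaluates each summand as $\tfrac{1}{n!}\int_{\Xan}\phi_D(dd^c\phi_{(m-i)\KL_2+i\KL_1})^n+o(m^n)$ uniformly in $i$. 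The homogeneity $(dd^c\phi_{(m-i)\KL_2+i\KL_1})^n=m^n(dd^c\phi_{\KL(i/m)})^n$ then displays the right-hand sum as $m^{n+1}$ times a Riemann sum for $\int_0^1\int_{\Xan}\phi_D(dd^c\phi_{\KL(t)})^n\,dt$, producing the upper bound $\limsup_m \tfrac{n!}{m^{n+1}}\length(\cdots)\le E(L,\phi_{\KL_1},\phi_{\KL_2})$.

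The main obstacle is the matching lower bound. I expect to obtain it by retaining the cokernel terms in the long exact cohomology sequences underlying Lemma~\ref{filtration lemma}: the defect in each step of the filtration is controlled by the $\Ko$-torsion of $H^1(\Xcal,\Mcal+(m-i)\KL_2+(i-1)\KL_1)$, whose total content summed over $i$ must be shown to be $o(m^{n+1})$. In the absence of Serre vanishing in the merely nef setting, the most robust route is to approximate $\KL_1,\KL_2$ by ample models $\KL_j+\tfrac{1}{k}A$ via Remark~\ref{R-nef limit of Q-ample} and to invoke the ample identity already established in~\cite{BGM}, then pass to the limit using the Lipschitz continuity of the energy (Proposition~\ref{properties of energy}(k)) together with a uniform-in-$m$ continuity estimate for the length side extracted from Proposition~\ref{asymptotic length formula for nef}.

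Finally, the assumption that $D$ is effective is removed by writing $D=D^+-D^-$ as a difference of effective vertical Cartier divisors and introducing the intermediate model $\KL_3:=\KL_2+\Ocal(D^+)=\KL_1+\Ocal(D^-)$; although $\KL_3$ need not be nef, the cocycle rule for energy (Proposition~\ref{properties of energy}(c)) together with the additivity of virtual content from~\ref{virtual length} splits both sides into two contributions, each governed by an effective divisor, after a further small ample perturbation of $\KL_3$ if necessary.
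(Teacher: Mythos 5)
Your upper bound argument is essentially the paper's: reduce to $D$ effective, interpolate through the nef models $\Mcal+(m-i)\KL_2+i\KL_1$ via Lemma~\ref{filtration lemma}, evaluate each summand by Proposition~\ref{asymptotic length formula for nef}, and recognize the Riemann sum for the Beta integral. This correctly yields
$$
\limsup_{m\to\infty} \frac{n!}{m^{n+1}} \length\!\left( \frac{H^0(\Xcal,\Mcal + m\KL_1)}{H^0(\Xcal,\Mcal + m\KL_2)} \right) \leq E(L, \phi_{\KL_1}, \phi_{\KL_2}).
$$

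The genuine gap is in what you call the ``main obstacle'': you overlook that there is no obstacle at all, because both sides of the proposition change sign when $\KL_1$ and $\KL_2$ are exchanged (the energy by Proposition~\ref{properties of energy}(a), and the content because $\length(\Vcal_1/\Vcal_2)=-\length(\Vcal_2/\Vcal_1)$ by the very definition in~\ref{virtual length}). Applying the displayed limsup inequality with $\KL_1$ and $\KL_2$ swapped (so with $-D$ in place of $D$, which after adding $\div(\pi)$ for a suitable $\pi\in K^\times$ is again effective) and multiplying by $-1$ gives at once the matching $\liminf \geq E(L,\phi_{\KL_1},\phi_{\KL_2})$. That is the paper's entire lower bound; it also makes the reduction to effective $D$ a one-line normalization (both sides shift by $v(\pi)\deg_L(X)$) rather than the $D=D^+-D^-$ splitting you propose. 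Your substitutes would not go through as stated: bounding the cokernel defect in each filtration step would require controlling the content of the torsion in $H^1(\Xcal,\cdot)$, which you have no way to estimate in the merely nef setting; the perturbation $\KL_j\mapsto\KL_j+\tfrac1kA$ changes the generic fibre of the model, so for different $k$ the modules $H^0(\Xcal,\cdot)$ are lattices in \emph{different} $K$-vector spaces and their relative contents cannot even be compared, let alone shown to converge uniformly in $m$; and the intermediate model $\KL_3=\KL_2+\Ocal(D^+)$ need not be nef, so $\phi_{\KL_3}$ need not be semipositive and the cocycle rule for energy does not apply to it.
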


\begin{proof}
	We first study what happens if we replace $D$ by $D' \coloneqq D+\div(\pi)$ for any non-zero $\pi \in K$. Then we replace the model $\KL_1$ by the model $\KL_1' \coloneqq \KL_1(\div(\pi)) \simeq \KL_1$ of $L$ which is also nef. By construction, we have $\Ocal(D')=\KL_1'-\KL_2$. Note  $\phi_{\KL_1'}=  v(\pi) + \phi_{\KL_1}$.
	Using Proposition \ref{properties of energy}(e), we get 
	\[
	E(L, \phi_{\KL'_1}, \phi_{\KL_2})  = 
	v(\pi) \deg_L(X) + E(L, \phi_{\KL_1}, \phi_{\KL_2}).
	\] 
	We have
	$$
	\length 
	\left( \frac {H^0(\Xcal,\Mcal + m{\KL'_1})} {H^0(\Xcal,\Mcal + m {\KL_1})}
	\right)
	=
	v(\pi) m\, h^0(X,M + mL) 
	$$
	and hence
	$$
	\length \left( \frac {H^0(\Xcal,\Mcal + m{\KL'_1})} {H^0(\Xcal,\Mcal + m {\KL_2})}
	\right)	
	=
	\length 
	\left( \frac {H^0(\Xcal,\Mcal + m{\KL_1})} {H^0(\Xcal,\Mcal + m {\KL_2})} \right)
	+ C_n(m)
	$$
	with $C_n(m)$ defined as
	$$v(\pi) m\, h^0(X,M \otimes L^{\otimes m}) = 	v(\pi) m\, h^0(X, L^{\otimes m}) + o(m^{n+1})
	=  v(\pi) \vol(L) \frac{m^{n+1}}{n!}    + o(m^{n+1})      $$
	where we used the analogue of \eqref{difference estimate} in Lemma \ref{lemma asymptotic easy} for projective schemes over a field (see \cite[Proposition 3.1.2]{BGJKM}).
	Since $L$ is a nef line bundle on $X$, we know that $\vol(L)=\deg_L(X)$ and hence  the claim for $D'$ implies the claim for $D$. This proves that we can replace $D$ by $D'$.
	
	First, we will prove the inequality 
	\begin{equation} \label{limsup inequality}
	\limsup_{m\to 0} \frac{n!}{m^{n+1}}  
	\length 
	\left( \frac {H^0(\Xcal,\Mcal + m{\KL_1})} {H^0(\Xcal,\Mcal + m {\KL_2})} \right)
	\leq 
	E(L, \phi_{\Lcal_1},\phi_{\Lcal_2}).
	\end{equation} 
	There is a 
	non-zero $\pi$ in  $\Ko$ such that $D+\div(\pi)$ 
	is an effective Cartier divisor. Replacing $D$ by $D + \div(\pi)$, the above shows that it is enough 
	to prove \eqref{limsup inequality} if $D$ is effective. 
	
	For non-zero $m$, Lemma \ref{filtration lemma} applied with $\Mcal + m\KL_2$ instead of $\KM$ and with $D$ instead of $E$ shows  that
	$$\length \left( \frac{
		H^0(\Xcal,  \Mcal + m\KL_1 )
	}{
		H^0(\Xcal, \Mcal + m\KL_2) 
	} \right)
	\leq 
	\sum_{i=1}^{m}\length(  H^0(D  , \Mcal + m \Lcal_2+ i\Ocal(D)) ).$$
	Using $\KL_1-\KL_2 = \Ocal(D)$, we get
	$$\length \left( \frac{
		H^0(\Xcal,  \Mcal + m\KL_1 )
	}{
		H^0(\Xcal, \Mcal + m\KL_2) 
	} \right)
	\leq 
	\sum_{i=1}^{m}\length(  H^0( D  ,\Mcal +  i\KL_1 +(m-i)\KL_2) ).$$
	Since $\Lcal_1$ and $\Lcal_2$ are nef, Proposition \ref{asymptotic length formula for nef}
	shows that the right hand side is
	$$
	\sum_{i=1}^{m}\sum_{j_1+ j_2=n} \frac{i^{j_1}  (m-i)^{j_2}}{j_1!  j_2!}  \int_{X^{\an}} \phi_{D} \, (dd^c \phi_{\Lcal_1})^{j_1} \wedge  (dd^c \phi_{\Lcal_2})^{j_2}
	+ o(m^{n+1}).$$
	We note that the following limit for $m\to\infty$ exists and is given by the 
	sum of Riemann integrals
	$$
	\lim_{m \to \infty} m^{-(n+1)}\sum_{i=1}^{m} i^{j_1}  (m-i)^{j_2}= \lim_{m \to \infty}   \frac{1}{m}\sum_{i=1}^{m}\left(\frac{i}{m}\right)^{j_1}  \left(1-\frac{i}{m}\right)^{j_2}
	=  \int_0^1  t^{j_1} (1-t)^{j_2} \,dt .
	$$ 
	Using the identity  $\int_0^1  t^{j_1} (1-t)^{j_2} \,dt = 
	\frac{j_1! j_2!}{(n+1)!}$, we get 
	$$\sum_{i=1}^{m} \frac{i^{j_1}  (m-i)^{j_2}}{j_1!  j_2!} = \frac{m^{n+1}}{(n+1)!} +o(m^{n+1}).$$
	Using our previous considerations, we get
	\begin{equation*}
	\begin{split}
	\limsup_{m\to 0} \frac{n!}{m^{n+1}}  
	&	\length \left( \frac{
		H^0(\Xcal,  \Mcal + m\KL_1 )
	}{
		H^0(\Xcal, \Mcal + m\KL_2) 
	} \right) \\
	&		\leq 
	\frac{1}{n+1}\sum_{j_1+j_2=n} \int_{X^{\an}} \phi_{D} (dd^c \phi_{\Lcal_1} )^{j_1} \wedge (dd^c \phi_{\Lcal_2} )^{j_2}.
	\end{split}
	\end{equation*}
	By definition, the right hand side is $E(L, \phi_{\Lcal_1},\phi_{\Lcal_2})$. This proves \eqref{limsup inequality}.
	
	If we multiply \eqref{limsup inequality} by $-1$ and if we exchange $\Lcal_1$ with $\Lcal_2$, then we get the reverse inequality
	\begin{equation} \label{liminf inequality}
	\liminf_{m\to 0} \frac{n!}{m^{n+1}}  
	\length 
	\left( \frac {H^0(\Xcal,\Mcal + m{\KL_1})} {H^0(\Xcal,\Mcal + m {\KL_2})} \right)
	\geq 
	E(L, \phi_{\Lcal_1},\phi_{\Lcal_2}).
	\end{equation}	 
	Combining \eqref{limsup inequality} and \eqref{liminf inequality}, we get the claim in the proposition.
\end{proof}

\begin{theo}\label{cor3 energy}
	Let $L$ be {a} line bundle on a geometrically reduced 
	proper scheme $X$ over $K$ and let $\phi_1$ and $\phi_2$ be continuous semipositive metrics on $L^{\an}$. 
	Then we have
	\begin{equation}\label{vol-is-energy-main-thm}
	\vol(L,\phi_1,\phi_2) = E(L, \phi_1,\phi_2).
	\end{equation}
\end{theo}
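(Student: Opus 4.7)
The plan is to reduce the theorem to the model-metric statement already established in Proposition~\ref{prop energy 2'}, via three routine reductions followed by a direct invocation.

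\emph{First, standard reductions.} Using Chow's lemma together with Lemma~\ref{volumes and birationality} and Proposition~\ref{properties of energy}(m), I would reduce to the case where $X$ is projective (passing if necessary to a reduced birational cover). Next, using Proposition~\ref{cor vol comparison base change} and Proposition~\ref{properties of energy}(l), both sides of~\eqref{vol-is-energy-main-thm} are invariant under the completed algebraic closure $K \hookrightarrow \widehat{K^{\mathrm{alg}}}$, and continuous semipositivity pulls back, so I may further assume that $K$ is algebraically closed. Finally, by Definition~\ref{continuous semipositive metrics} each $\phi_i$ is a uniform limit of semipositive model metrics $\phi_i^{(k)}$; the Lipschitz bounds in Proposition~\ref{basic properties of non-archimedean volume}(e) and Proposition~\ref{properties of energy}(k) let me pass to the limit on both sides, so it suffices to prove the identity when $\phi_1,\phi_2$ are semipositive model metrics.

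\emph{Next, alignment on a common integrally closed model.} By homogeneity (Proposition~\ref{properties of energy}(d) and Corollary~\ref{lemma homogeneity}(iii)), after replacing $L$ by a common multiple I may assume each $\phi_i = \phi_{\Lcal_i'}$ arises from an honest nef line bundle $\Lcal_i'$ on some model $\Xcal_i'$ of $X$. Using Remark~\ref{dominance} and crucially Lemma~\ref{lemma cofinal} (which requires $K$ algebraically closed, whence the earlier reduction), I can dominate both $\Xcal_i'$ by a single projective model $\Xcal$ of $X$ that is integrally closed in $X$. Pulling back, $\phi_i = \phi_{\Lcal_i}$ for nef line bundles $\Lcal_1,\Lcal_2$ on the common $\Xcal$, and by Example~\ref{vertical Cartier divisors} the difference $\Lcal_1 - \Lcal_2$ is $\Ocal(D)$ for a vertical Cartier divisor $D$ on $\Xcal$.

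\emph{Conclusion.} Apply Proposition~\ref{prop energy 2'} with $\Mcal = \Ocal_\Xcal$ to obtain
\begin{equation*}
E(L, \phi_1, \phi_2) \;=\; \lim_{m\to\infty} \frac{n!}{m^{n+1}}\, \length\!\left(\frac{H^0(\Xcal, m\Lcal_1)}{H^0(\Xcal, m\Lcal_2)}\right).
\end{equation*}
Since $\Xcal$ is integrally closed in $X$, Lemma~\ref{global sections and widehat} identifies $H^0(\Xcal, m\Lcal_i)$ with $\widehat{H^0}(X, mL, m\phi_i)$ as lattices in $H^0(X, mL)$, and then Remark~\ref{compare volumes} together with Theorem~\ref{existence of the limit} shows that the right-hand side is exactly $\vol(L, \phi_1, \phi_2)$.

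\emph{Main obstacle.} All the real difficulty is absorbed into Proposition~\ref{prop energy 2'}, whose proof rests on the Hilbert--Samuel theory for finitely presented projective torsion schemes over the possibly non-Noetherian $\Ko$ developed in Section~\ref{section volume torsion} (in particular the continuity and homogeneity of the asymptotic cohomological functions $\hat{h}^q$, and the volume formula in Proposition~\ref{volume formula for real nef} for nef $\Lcal \in \Pic(\Xcal)_\R$). The only subtle point in the reduction itself is Step~2: matching both metrics on a \emph{common} integrally closed projective model, which is precisely what forces the passage to $K$ algebraically closed.
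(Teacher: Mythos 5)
Your proposal is correct and follows essentially the same route as the paper: reduce via base change and Chow's lemma to $K$ algebraically closed and $X$ projective, use uniform approximation by semipositive model metrics together with the Lipschitz continuity of volume and energy, align the two metrics on a common integrally closed projective model (via Lemma~\ref{lemma cofinal} and homogeneity), and conclude from Proposition~\ref{prop energy 2'}, Lemma~\ref{global sections and widehat}, and Remark~\ref{compare volumes}. The only differences from the paper's proof are cosmetic (the order in which the reductions are presented, and the fact that the paper proves the model-metric case first and then passes to the limit whereas you reduce to it first).
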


\begin{proof}
	By base change and using Proposition \ref{cor vol comparison base change}  and Proposition  \ref{properties of energy}, we may assume that $K$ is algebraically closed.  By Chow's lemma 
	and birational invariance of non-Archimedean volumes and energy (see Lemma \ref{volumes and birationality} and Proposition \ref{properties of energy}), we may assume $X$ to be projective. 
	
	We first prove the claim for semipositive model metrics. Since the projective models of $X$ which are integrally closed in $X$ are cofinal by  Lemma \ref{lemma cofinal}, we may assume 
	that $\phi_i = \phi_{\Lcal_i}$ for some nef $\Q$-line bundle $\Lcal_i$ on some common projective model $\Xcal$ which is integrally closed in $X$. Homogeneity of the non-Archimedean volume   in Corollary \ref{lemma homogeneity} and the energy in Proposition \ref{properties of energy}  show that we may assume that the $\Lcal_i$ are nef line bundles on $\Xcal$. 
	Then the claim follows from Proposition \ref{prop energy 2'}, Lemma \ref{global sections and widehat} and Remark \ref{compare volumes}.
	
	Arbitrary continuous semipositive metrics  on $\Lan$ are uniform limits of semipositive model metrics on $\Lan$. 
	Then the claim follows from the first case as both the non-Archimedean volume 
	and 
	the energy are continuous in $( \phi_1, \phi_2)$ (see Propositions \ref{properties of energy} and \ref{basic properties of non-archimedean volume}).
\end{proof}

\section{Differentiability of non-Archimedean volumes}
\label{section inequality}

We will prove our main result about differentiation of non-Archimedean volumes. It generalizes \cite[Theorem B]{BGJKM} from the case of  discrete valuations to  arbitrary non-Archimedean complete absolute values and \cite[Theorem A]{BGM} from the ample to the nef case.

\subsection{Intermediate result for models}
\label{subsection model results}


We consider an $n$-dimensional projective scheme $X$ over $K$ with a projective model $\KX$ over $\Ko$. Let $D$ be a vertical Cartier divisor on $\Xcal$.   
Since $\KX$ is projective, 
we can write $\Ocal(D) = \KM_1 - \KM_2$ for nef line bundles $\KM_1,\KM_2$ on $\KX$. 
We consider a nef line bundle $\Lcal$ and  an arbitrary line bundle $\Ncal$ on $\KX$. 

\begin{lemma}
	\label{lemma inductive step}
	Unter the above assumptions, let $\phi_D$ be the model function associated to the vertical Cartier divisor $D$ and let
	$\Fcal_{j,m} \coloneqq \Ncal + m \Lcal +j(\KM_1-\KM_2)$. If $D$ is effective, then   we have
	\begin{equation}
	\label{eq inequality 2 model}
	\frac{n!}{m^n} {\length} \left( \frac {H^0(\KX,\Fcal_{j+1,m} )} {H^0(\KX,\Fcal_{j,m} ) } \right)\leq \int_\Xan \phi_D \, ( dd^c \phi_\Lcal  +dd^c \phi_{\KM_1}  )^{n} + o(1)
	\end{equation}
	for integers   $m \to \infty$ and all $j\in\{0,\ldots,m-1\}$. If $-D$ is effective, then $\geq$ holds in \eqref{eq inequality 2 model}. 
\end{lemma}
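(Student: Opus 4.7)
The plan is to reduce both cases to the nef asymptotic of Proposition~\ref{asymptotic length formula for nef}; I detail the case $D$ effective, the case $-D$ effective being entirely symmetric.

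First I would use Lemma~\ref{lemma cofinal} to pass to a cofinal model $\KX$ which is integrally closed in $X$, so that by Lemma~\ref{lemma effective} the canonical meromorphic section $s_D$ of $\Ocal(D)=\KM_1-\KM_2$ is a genuine global section. Multiplication by $s_D$ gives the short exact sequence
\[
0 \to \Fcal_{j,m} \xrightarrow{\,\cdot\, s_D\,} \Fcal_{j+1,m} \to \Fcal_{j+1,m}|_D \to 0,
\]
whose long cohomology sequence, combined with the additivity of the content on finitely presented torsion modules (Proposition~\ref{prop exactness of length}), yields
\[
\length\!\left(\frac{H^0(\KX,\Fcal_{j+1,m})}{H^0(\KX,\Fcal_{j,m})}\right) \le h^0\bigl(D, \Fcal_{j+1,m}|_D\bigr).
\]

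The heart of the argument is to bound $h^0(D, \Fcal_{j+1,m}|_D)$ by $h^0(D, m(\Lcal+\KM_1)+\Acal)$ for a fixed auxiliary ample line bundle $\Acal$ on $\KX$ chosen so that $\Acal-\Ncal$ is ample (possible since $\KX$ is projective, \emph{cf.}~\cite[Corollaire~4.5.8]{ega2}). Setting $a:=j+1\in\{1,\dots,m\}$, the identity
\[
m(\Lcal+\KM_1)-\Fcal_{j+1,m} \;=\; (m-a)\KM_1+a\KM_2-\Ncal
\]
expresses the difference as a nef part (a nonnegative combination of nef line bundles) plus the fixed $-\Ncal$. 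Hence $(m-a)\KM_1+a\KM_2+\Acal-\Ncal$ is ample on $\KX$ by Proposition~\ref{sum of R-nef and R-ample}, uniformly in $a\in\{1,\dots,m\}$. Applying Lemma~\ref{lemma lift section 4} to the finite set of associated points of $\Ocal_D$, one produces, for $m$ large enough and uniformly in $a$, a global section of this line bundle on $D$ which is a non-zero divisor on $\Fcal_{j+1,m}|_D$. Multiplication by this section gives the injection
\[
\Fcal_{j+1,m}|_D \;\hookrightarrow\; m(\Lcal+\KM_1)|_D+\Acal|_D,
\]
and hence $h^0(D,\Fcal_{j+1,m}|_D)\le h^0(D,m(\Lcal+\KM_1)+\Acal)$. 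Since $\Lcal+\KM_1$ is nef, Proposition~\ref{asymptotic length formula for nef} then yields
\[
h^0\bigl(D, m(\Lcal+\KM_1)+\Acal\bigr) \;=\; \frac{m^n}{n!}\int_\Xan \phi_D\,(dd^c\phi_\Lcal+dd^c\phi_{\KM_1})^n + o(m^n),
\]
where the $o(m^n)$ is independent of $j$ because the bounding line bundle is. Dividing by $m^n/n!$ and assembling the three steps gives the claimed inequality when $D$ is effective. For $-D$ effective the argument is parallel: the canonical section $s_{-D}$ of $\Ocal(-D)=\KM_2-\KM_1$ produces the reversed short exact sequence $0\to\Fcal_{j+1,m}\xrightarrow{\cdot s_{-D}}\Fcal_{j,m}\to\Fcal_{j,m}|_{-D}\to 0$, and the corresponding identity $m(\Lcal+\KM_1)-\Fcal_{j,m}=(m-j)\KM_1+j\KM_2-\Ncal$ (again nef plus a fixed correction, valid for $j\in\{0,\dots,m-1\}$) leads, after passage to the virtual content and using $\phi_{-D}=-\phi_D$, to the reversed inequality.

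The main obstacle is the uniform-in-$a$ construction of the non-zero-divisor section above, as $D$ is in general non-Noetherian. In the discretely valued case of \cite{BGJKM} this amounts to a routine prime-avoidance argument on the Noetherian scheme $D$; in our setting one must invoke the section-lifting results of Section~\ref{Finding sections in the non Noetherian case}, and if the direct uniformity proves elusive, one can first perturb $\KM_1,\KM_2$ into ample $\Q$-line bundles, carry out the argument there, and return to the nef case using the weak continuity of the Monge--Amp\`ere measures (Proposition~\ref{MA measures}(d)) together with the continuity of $\hh^0$ on $\Pic(D)_\R$ supplied by Theorem~\ref{thm:asympcohom}.
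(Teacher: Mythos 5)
Your sheaf-theoretic setup matches the paper's: multiplication by $s_D$ (resp.\ $s_{-D}$) gives the short exact sequence, and the content of the virtual quotient is bounded by $h^0(Y,\Fcal_{j+1,m}|_Y)$ with $Y=D$ (resp.\ $h^0(Y,\Fcal_{j,m}|_Y)$ with $Y=-D$). Your preliminary passage to an integrally closed model via Lemmas~\ref{lemma cofinal} and~\ref{lemma effective} is however both illegitimate (the lemma concerns the given $\KX$, and replacing it changes the statement) and unnecessary, since ``$D$ effective'' already means that $s_D$ is a global section. Where your route genuinely diverges from the paper's is in how the resulting $h^0$ is controlled, and there you flag — but do not close — a real gap. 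You need, for every pair $(a,m)$ with $a\in\{1,\dots,m\}$, a section of the ample line bundle $(m-a)\KM_1+a\KM_2+\Acal-\Ncal$ on $D$ that is a non-zero-divisor on $\Fcal_{a,m}|_D$, and you need it \emph{uniformly} in $a$. Neither Lemma~\ref{lemma lift section 4} nor Proposition~\ref{BE correction} gives this: each produces a (relatively) regular section only after passing to a sufficiently large power of a single fixed line bundle, with a threshold that depends on that bundle, and gives no control over the one-parameter family $(m-a)\KM_1+a\KM_2$. Your suggested fallback — perturb $\KM_1,\KM_2$ to ample $\Q$-line bundles and use continuity — does not remove the obstruction either: ampleness of the $\KM_i$ does not make the uniform-in-$a$ regular section any easier to produce, and continuity of $\hh^0$ on $\Pic(D)_\R$ constrains the \emph{asymptotic} volume $\hh^0(D,\cdot)$, not the individual quantities $h^0(D,\Fcal_{a,m})$ you actually need to bound.

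The paper sidesteps all of this by never constructing an injection of sheaves on $D$. After reducing to $h^0(E,\Fcal_{j,m})$ via the same exact sequence, it compares \emph{volumes}: Lemma~\ref{lemma nef increasing} (which confines the non-Noetherian difficulty to Proposition~\ref{BE correction} applied to one fixed ample bundle) gives $\vol(E,\Fcal_{j,m})\le \vol(E,\Ncal+m(\Lcal+\KM_1))$ for all $j$, and the passage from this asymptotic inequality on $\Pic(E)_\R$ to the uniform-in-$j$ estimate $h^0(E,\Fcal_{j,m})\le \tfrac{m^n}{n!}\int_{\Xan}\phi_E\,(dd^c\phi_\Lcal+dd^c\phi_{\KM_1})^n+o(m^n)$ runs exactly as in the proof of Proposition~\ref{asymptotic length formula for nef}: argue by contradiction, pass to a subsequence with $j_k/m_k\to t\in[0,1]$, and invoke Proposition~\ref{prop:hh}, Lemma~\ref{lem:perturbation} and Proposition~\ref{volume formula for real nef} together with $\hh^0\big(E,\Lcal+t(\KM_1-\KM_2)\big)\le\hh^0(E,\Lcal+\KM_1)$. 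In short, the paper replaces your pointwise, section-by-section injection by a compactness argument on the real Picard group of $E$, and it is precisely this move that avoids the uniform regular-section problem blocking your proposal.
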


\begin{proof}
	We prove first the claim in the case $E \coloneqq -D$ effective.
	The canonical section $s_E$ of $\Ocal(E)=\KM_2-\KM_1$ determines a short exact sequence of coherent sheaves on $\KX$:
	\begin{equation*}
	0 \longrightarrow \Fcal_{j+1,m} \stackrel{\otimes s_E}{\longrightarrow}
	\Fcal_{j,m} \longrightarrow
	\Fcal_{j,m}|_{E} \longrightarrow 0
	\end{equation*}
	The start of the associated long exact sequence in cohomology is 
	\begin{equation*}
	0 \longrightarrow H^0( \KX, \Fcal_{j+1,m}) \stackrel{\otimes s_E}{\longrightarrow}
	H^0(\KX, \Fcal_{j,m} )  {\longrightarrow}
	H^0 (E, \Fcal_{j,m})  \longrightarrow \cdots 
	\end{equation*}
	and hence 
	\begin{equation} \label{estimate of the quotient}
	\length \left(
	H^0(\KX,\Fcal_{j,m}) /   H^0(\KX,\Fcal_{j+1,m}) 
	\right) \leq \length \left( H^0 (E, \Fcal_{j,m})  \right) =  h^0(E,\Fcal_{j,m})
	.
	\end{equation}  
	Using that $\KM_1$ and $\KM_2$ are {nef}, we deduce from Lemma  \ref{lemma nef increasing} that
	$$\vol(E,\Fcal_{j,m}) \leq  \vol(E,\Ncal + m\Lcal+j\KM_1) \leq  \vol(E, \Ncal+ m(\Lcal+\KM_1)). $$
	 These inequalities and Proposition \ref{asymptotic length formula for nef} give
	\begin{equation} \label{volume formula in the proof}
	h^0(E,\Fcal_{j,m}) \leq  \frac{m^n}{n!} \int_\Xan  \phi_E \, ( dd^c \phi_\Lcal  + dd^c \phi_{\KM_1} )^{n} + o(m^n).
	\end{equation}
	By \eqref{estimate of the quotient}, we get
	$$ \frac{n!}{m^n} \length \left(
	H^0(\KX,\Fcal_{j,m}) /   H^0(\KX,\Fcal_{j+1,m}) 
	\right) \leq  \int_\Xan  \phi_E \, ( dd^c \phi_\Lcal  + dd^c \phi_{\KM_1} )^{ n} + o(1).$$
	Using that $\phi_D = - \phi_E$, we get the desired reverse inequality in \eqref{eq inequality 2 model}.
	
	Now we deal with the  case $E \coloneqq D$ effective. The proof is quite similar as in the first case. The canonical global section $s_E$ induces a short exact sequence
	\begin{equation*}
	0 \longrightarrow \Fcal_{j,m} \stackrel{\otimes s_E}{\longrightarrow}
	\Fcal_{j+1,m} \longrightarrow
	\Fcal_{j+1,m}|_{E} \longrightarrow 0
	\end{equation*}
	of coherent sheaves on $\Xcal$. The same argument with the long exact cohomology sequence gives
	\begin{equation} \label{estimate of the quotient'}
	\length \left(
	H^0(\KX,\Fcal_{j+1,m}) /   H^0(\KX,\Fcal_{j,m}) 
	\right) \leq   h^0(E,\Fcal_{j+1,m}) 
	.
	\end{equation}
	The asymptotic formula \eqref{volume formula in the proof} holds still for $j+1$ instead of $j$ and so we get \eqref{eq inequality 2 model} by using \eqref{estimate of the quotient'}.
\end{proof}

\subsection{Main result}
\label{subsection main result}

In this subsection,  we assume that $X$ is an $n$-dimensional geometrically reduced proper scheme over $K$. We apply first the previous result to model functions.

\begin{lemma}
	\label{main prop}
	Let $L$ be a line bundle on $X$,  
	$f$ a model function on $X^{\an}$ and $\phi$ a continuous semipositive metric on $L$.
	Let $M$ be a line bundle on $X$ and $\phi_1, \phi_2$ 
	continuous semipositive metrics of $M$ such that 
	$f = \phi_1- \phi_2$.
	If $f\geq 0$, then 
	\begin{equation}
	\label{eq inequality 1}
	\vol(L, \phi +f , \phi) \leq \int_\Xan f \, ( dd^c \phi + dd^c \phi_1 )^{n}.
	\end{equation}
	If $f \leq 0$, then $\geq$ holds in \eqref{eq inequality 1}.
\end{lemma}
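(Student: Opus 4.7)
The plan is to approximate the data by semipositive model metrics on a common projective integrally closed model, apply Lemma~\ref{lemma inductive step} telescopically over a filtration parameter $j$, and pass to the limit.

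By Chow's lemma with Lemma~\ref{volumes and birationality} and Proposition~\ref{MA measures}(c), I may assume $X$ projective; by Proposition~\ref{cor vol comparison base change}, also $K$ algebraically closed. Since $f$ is a model function, fix a projective integrally closed model $\Xcal$ of $X$ (Lemma~\ref{lemma cofinal}) on which $f = \phi_D$ for a vertical Cartier divisor $D$, with $D$ effective when $f \geq 0$ by Lemma~\ref{lemma effective}. By density of semipositive model metrics (\S\ref{model metrics}) together with cofinality of integrally closed models, I approximate $\phi$, $\phi_1$, $\phi_2$ uniformly by semipositive model metrics $\phi^{(k)} = \phi_{\Lcal^{(k)}}$ and $\phi_i^{(k)} = \phi_{\Mcal_i^{(k)}}$ for $i=1,2$, on a common projective integrally closed model dominating $\Xcal$, with $\Lcal^{(k)}, \Mcal_i^{(k)}$ nef line bundles (using homogeneity, Corollary~\ref{lemma homogeneity}, Proposition~\ref{properties of energy}(d), Theorem~\ref{thm:asympcohom}(ii), to clear denominators). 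Set $f^{(k)} \coloneqq \phi_{\Mcal_1^{(k)}} - \phi_{\Mcal_2^{(k)}}$, a model function converging uniformly to $f$; for $f \geq 0$, shifting $\phi_{\Mcal_1^{(k)}}$ by $\eta_k \in v(K^\times)$ tending to $0$ (which preserves nefness, the adjusted $\Mcal_1^{(k)}$ being isomorphic to the original) arranges $f^{(k)} + \eta_k \geq 0$, and the associated vertical divisor $D^{(k)}$ is then effective by Lemma~\ref{lemma effective}.

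For the main computation, Lemma~\ref{lemma inductive step} applied with $\Ncal = \Ocal_\Xcal$ and the nef line bundles $\Lcal^{(k)}, \Mcal_1^{(k)}, \Mcal_2^{(k)}$ yields, uniformly in $j \in \{0, \ldots, m-1\}$,
\begin{equation*}
\frac{n!}{m^n}\length\!\left(\frac{H^0(\Xcal, \Fcal^{(k)}_{j+1,m})}{H^0(\Xcal, \Fcal^{(k)}_{j,m})}\right) \leq \int_{\Xan}(f^{(k)} + \eta_k)\bigl(dd^c\phi^{(k)} + dd^c\phi_{\Mcal_1^{(k)}}\bigr)^n + o(1)
\end{equation*}
as $m \to \infty$, where $\Fcal^{(k)}_{j,m} \coloneqq m\Lcal^{(k)} + j(\Mcal_1^{(k)} - \Mcal_2^{(k)})$. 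Summing over $j$, additivity of content telescopes the left-hand side; by Lemma~\ref{global sections and widehat} the resulting quotient coincides with $\widehat{H^0}(X, mL, m(\phi^{(k)} + f^{(k)} + \eta_k))/\widehat{H^0}(X, mL, m\phi^{(k)})$, so Remark~\ref{compare volumes} and Theorem~\ref{existence of the limit}, upon dividing by $m$ and letting $m \to \infty$, give
\begin{equation*}
\vol(L, \phi^{(k)} + f^{(k)} + \eta_k, \phi^{(k)}) \leq \int_{\Xan}(f^{(k)} + \eta_k)\bigl(dd^c\phi^{(k)} + dd^c\phi_{\Mcal_1^{(k)}}\bigr)^n.
\end{equation*}

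Letting $k \to \infty$, the left-hand side converges to $\vol(L, \phi + f, \phi)$ by Proposition~\ref{basic properties of non-archimedean volume}(e), while the right-hand side tends to $\int_{\Xan} f\,(dd^c\phi + dd^c\phi_1)^n$ by Proposition~\ref{MA measures}(d) combined with uniform convergence of the integrands. This handles the case $f \geq 0$; the case $f \leq 0$ follows via the reversed inequality of Lemma~\ref{lemma inductive step}. The main obstacle I anticipate is that the model-metric approximation $f^{(k)} = \phi_{\Mcal_1^{(k)}} - \phi_{\Mcal_2^{(k)}}$ does not in general equal $f$, forcing one to control the discrepancy by continuity of both non-Archimedean volumes and Monge--Amp\`ere measures, and to work on integrally closed models throughout so that small-section lattices $\widehat{H^0}$ are identified with $H^0$ of nef models via Lemma~\ref{global sections and widehat}.
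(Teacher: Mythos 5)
Your proof is correct and follows essentially the same route as the paper's: reduce to $K$ algebraically closed and $X$ projective, approximate by semipositive model metrics on a common integrally closed model, apply Lemma~\ref{lemma inductive step} with $\Ncal=\Ocal_\Xcal$, telescope, and pass to the limit. The only difference is that you spell out explicitly the step the paper compresses into ``by continuity we may assume \dots are induced by nef $\Q$-line bundles'': namely, that $f^{(k)} := \phi_1^{(k)}-\phi_2^{(k)}$ need not equal $f$ nor be nonnegative, and that the small shift $\eta_k\in v(K^\times)$ (using divisibility of the value group once $K$ is algebraically closed) fixes the sign without changing the nef line bundle class. This is a legitimate fill-in of the paper's reduction, not a different method.
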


\begin{proof}
	By Corollary \ref{cor vol comparison base change}, 
	the left hand side is invariant under base change.
	By Proposition \ref{MA measures}(f), the right hand side is invariant under base change. 
	Hence we can assume that $K$ is algebraically closed.
	By Chow's lemma and birational invariance of non-Archimedean volumes and energy (see Lemma \ref{volumes and birationality} and Proposition \ref{properties of energy}), 
	we may assume $X$ projective.
	By continuity of the non-Archimedean volume and of the non-Archimedean Monge--Amp\`ere measures, we may assume that  
	$\phi, \phi_1,\phi_2$ are 
	induced by  nef $\Q$-line bundles $\Lcal, \Mcal_1,\Mcal_2$.  
	Using Lemma \ref{lemma cofinal}, we may assume that the $\Q$-line bundles are determined on a common projective model $\Xcal$ which is integrally closed in $X$. By the homogenity of the non-Archimedean volume, we may assume that $\Lcal,\Mcal_1,\Mcal_2$ are honest line bundles on $\Xcal$.

	Let us assume first that $f \geq 0$. By our above assumptions, we have a vertical Cartier divisor $D$ on $\Xcal$ with $\phi_D=f$. Since $\Xcal$ is integrally closed, Lemma \ref{lemma effective} shows that $D$ is an effective Cartier divisor. 
	We have to prove that 
	\begin{equation}
	\label{eq inequality 21}
	\limsup_{m \to \infty} \frac{1}{m^{n+1}/n!} \vol \left( \metr_{m(
		\phi_\Lcal+\phi_D)}, 
	\metr_{m\phi_\Lcal} 
	\right)
	\leq \int_\Xan \phi_D \, ( dd^c \phi_\Lcal  + dd^c \phi_{\Mcal_1} )^{ n}.
	\end{equation}
	By Remark \ref{compare volumes} and Lemma \ref{global sections and widehat}, 
	it is enough to prove that 
	\begin{equation*}
	\label{eq inequality 3}
	\limsup_{m \to \infty} \frac{1}{m^{n+1}/n!} \length \left( \frac{
		H^0(\Xcal, m (\Lcal + \KM_1- \KM_2) )
	}{
		H^0(\Xcal, m \Lcal)
	}  \right)
	\leq \int_\Xan \phi_D \, ( dd^c \phi_{\Lcal}  +dd^c \phi_{\KM_1}  )^{ n}.
	\end{equation*}
	We apply now Lemma \ref{lemma inductive step} with $\Ncal=\Ocal_\Xcal, \Lcal$ and $\Ocal(D)=\KM_1-\KM_2$ for the summands in 
	\begin{equation*} \label{addivity of length argument}
	{\length}  \left( \frac{
		H^0(\Xcal, m (\Lcal + \KM_1- \KM_2) )
	}{
		H^0(\Xcal, m\Lcal)
	}  \right) 
	= \sum_{j=0}^{m-1} 
	{\length} \left( \frac {H^0(\KX,\Fcal_{j+1,m} )} {H^0(\KX,\Fcal_{j,m} ) } \right).
	\end{equation*}
	This gives \eqref{eq inequality 21} and  proves the case $f \geq 0$. 
	Now assume that $f \leq 0$. Then the reverse inequality in \eqref{eq inequality 1} follows by applying the first case for $-f$ switching the role of $\Lcal_1,\Lcal_2$.
\end{proof}

\begin{theo}
	\label{corollary differentiablity yuan's argument}
	Let $X$ be an $n$-dimensional geometrically reduced proper scheme over $K$.
	Let $L$ be a line bundle on $X$, 
	$f$ a continuous real function on $\Xan$ and $\phi$ a continuous semipositive metric on $L$.
	Then 
	$$\frac{d}{dt}\bigg|_{t=0} \vol(L, \phi + tf, \phi) = \int_\Xan f \, (dd^c \phi)^{n}.$$
\end{theo}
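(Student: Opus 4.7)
The plan follows the Yuan-style filtration argument adapted to the non-Noetherian case in the spirit of \cite{BGJKM}, using Lemma~\ref{main prop} as the main technical input. Write $V_f(t) := \vol(L, \phi + tf, \phi)$ for brevity.

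First, I would reduce to the case of a model function $f$. Proposition~\ref{basic properties of non-archimedean volume}(e) gives the Lipschitz estimate
$$|V_f(t) - V_g(t)| \leq |t| \, \|f - g\|_\infty \vol(L),$$
so $V_f(t)/t$ is $\vol(L)$-Lipschitz in $f$ uniformly in $t \ne 0$. Similarly, the candidate derivative $f \mapsto \int_{\Xan} f \, (dd^c\phi)^n$ is $\vol(L)$-Lipschitz in the uniform norm, since $(dd^c\phi)^n$ has total mass $\deg_L(X)$. Combined with the density of model functions in $C(\Xan)$ recalled in \ref{model metrics}, a Moore--Osgood-type interchange of limits reduces the theorem to the case where $f$ is a model function.

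Second, for a model function $f$ I would use that $f = \phi_D$ for some vertical Cartier divisor $D$ on a projective model $\Xcal$ of $X$, and write $\Ocal(D) = \Lcal_1 - \Lcal_2$ as a difference of nef (in fact ample) line bundles on $\Xcal$ (using \cite[Corollaire~4.5.8]{ega2}). This yields a decomposition $f = \phi_1 - \phi_2$ where $\phi_i := \phi_{\Lcal_i}$ are continuous semipositive metrics on the common line bundle $M := \Lcal_i|_X$ on $X$. For each rational $t$ of small absolute value such that $tf$ has a definite sign on $\Xan$, I would apply Lemma~\ref{main prop} to the rescaled model function $tf$ with the appropriate decomposition ($tf = (t\phi_1) - (t\phi_2)$ for $t > 0$, or $tf = (|t|\phi_2) - (|t|\phi_1)$ for $t < 0$, in each case landing in the semipositive cone). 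In each sign configuration this produces an inequality of the shape
$$V_f(t) \lessgtr t \int_{\Xan} f \, (dd^c\phi + |t|\, dd^c\eta)^n = t \int_{\Xan} f\, (dd^c\phi)^n + O(t^2),$$
with $\eta \in \{\phi_1, \phi_2\}$ and the direction of the inequality depending on the signs of $t$ and $f$.

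Third, combining the four sign cases---together with the parallel bounds obtained by applying Lemma~\ref{main prop} to $-f$ (using the swapped decomposition $-f = \phi_2 - \phi_1$) and the cocycle rule from Corollary~\ref{lemma homogeneity}(ii)---I would extract the matching asymptotic $V_f(t) = t\int_{\Xan} f\, (dd^c\phi)^n + O(t^2)$ as $t \to 0$, proving differentiability with the announced derivative. The Lipschitz continuity from the first step then extends the formula from model functions back to arbitrary continuous $f$.

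The hard part will be handling model functions $f$ of mixed sign, since Lemma~\ref{main prop} requires $tf$ to be of a fixed sign on all of $\Xan$, a condition which $tf$ never satisfies when $f$ itself is of mixed sign. To circumvent this I would decompose the underlying vertical Cartier divisor as $D = D_+ - D_-$ with $D_\pm$ effective, giving $f = f_+ - f_-$ with both $f_\pm := \phi_{D_\pm} \geq 0$ model functions. Applying Lemma~\ref{main prop} separately to each $f_\pm$ (for which the sign condition is automatic) and combining the resulting estimates via the cocycle rule and the additivity of the Monge--Amp\`ere integral in the test function then produces the two-sided asymptotic needed to conclude differentiability in the general case.
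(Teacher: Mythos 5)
Your first step (reduction to model functions via density and the Lipschitz estimate of Proposition~\ref{basic properties of non-archimedean volume}(e)) is correct and matches the paper. The gap lies in the claim that Lemma~\ref{main prop}, applied in the various sign configurations, produces a \emph{two-sided} asymptotic. It does not: for $f\ge 0$, applying the lemma to $tf$ with $t>0$ gives $V_f(t)\le t\int f(dd^c\phi)^n+O(t^2)$, and with $t<0$ gives $V_f(t)\ge t\int f(dd^c\phi)^n+O(t^2)$; after dividing by $t$ (and flipping the second inequality because $t<0$), \emph{both} cases yield only the upper bound $V_f(t)/t\le\int f(dd^c\phi)^n+O(|t|)$. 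Applying the lemma to $-f$ gives nothing new, since $V_{-f}(t)=V_f(-t)$ reproduces the same bound after substitution. And the cocycle rule relates $V_f(t)-V_f(-t)$ to $\vol(L,\phi-tf,\phi+tf)$, which Lemma~\ref{main prop} cannot estimate because neither $\phi+tf$ nor $\phi-tf$ is semipositive. So the lower bound is missing even for $f$ of fixed sign. The proposed fix for mixed-sign $f$ via $D=D_+-D_-$ has the same defect: the relative volume is not additive in the perturbation, and the cocycle identity $\vol(L,\phi+tf,\phi)=\vol(L,\phi+tf,\phi+tf_+)+\vol(L,\phi+tf_+,\phi)$ leaves you with a term whose base metric $\phi+tf_+$ is not semipositive (the $f_\pm$ are model functions of effective vertical divisors, not semipositive metrics), so Lemma~\ref{main prop} does not apply to it.

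The missing ingredient, which the paper uses and which resolves both issues at once, is a shift-by-constant trick. By Proposition~\ref{basic properties of non-archimedean volume}(c), one has $\vol(L,\phi+t(f-c),\phi)=\vol(L,\phi+tf,\phi)-tc\,\vol(L)$ for any $c\in\R$, and since $L$ is nef, $\vol(L)=\deg_L(X)$ equals the total mass of $(dd^c\phi)^n$ by Proposition~\ref{MA measures}(e), so $\int(f-c)(dd^c\phi)^n=\int f(dd^c\phi)^n-c\,\vol(L)$; the two constant contributions cancel. Taking $c\ge\sup f$ makes $f-c\le 0$ and the lower-bound direction of Lemma~\ref{main prop} then yields $\liminf_{t\to 0+}V_f(t)/t\ge\int f(dd^c\phi)^n$; taking $c\le\inf f$ gives the matching upper bound. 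This simultaneously supplies the missing side of the asymptotic and dispenses with any separate treatment of mixed-sign $f$, since $f-c$ can always be arranged to have a definite sign while the base metric remains $\phi$. Replacing $f$ by $-f$ then handles $t\to 0-$.
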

\begin{proof}
	We have seen in  \ref{model metrics} that model functions are dense in the space of continuous functions on $\Xan$. By continuity and monoticity of non-Archimedean volumes in Proposition \ref{basic properties of non-archimedean volume}, we may assume that $f$ is a model function (see the proofs of \cite[Theorem 5.4.3]{BGJKM} or \cite[Theorem 3.1]{BGM} for details). 
	It follows from \ref{model metrics} that $L$ is nef and hence $\vol(L)=\deg_L(X)$. 
	If $f\leq 0$, then  Lemma \ref{main prop} implies
	\begin{equation} \label{lower bound for na volume}
	\liminf_{t\to 0+} \frac{\vol(L, \phi + tf, \phi)}{t} 
	\geq  \int_\Xan f \, (dd^c \phi)^{n}.
	\end{equation}
	Using that for $c\in \R$, we have
	$\vol (L,\phi+c , \phi) = \vol(L)+c\deg_L(X)$ (see Proposition \ref{basic properties of non-archimedean volume}), 
	we deduce from Proposition \ref{MA measures}(e) that \eqref{lower bound for na volume} holds for any continuous function $f$.
	Using Lemma \ref{main prop} in the case $f \geq 0$, 
	a similar trick shows that
	\begin{equation*} \label{lower bound for na volume'}
	\limsup_{t\to 0+} \frac{\vol(L, \phi + tf, \phi)}{t} 
	\leq  \int_\Xan f \, (dd^c \phi)^{n}.
	\end{equation*} 
	holds for any continuous function $f$ and hence we get 
	$$\lim_{t\to 0+} \frac{\vol(L, \phi + ft, \phi)}{t}
	=
	\int_\Xan f \, (dd^c \phi)^{n}.
	$$
	Finally replacing $f$ by $-f$, we get the same for $t \to 0-$ proving the claim.
\end{proof}

\section{Orthogonality property} \label{Section: differentiability of energy}

In this section, we prove the orthogonality property of a continuous semipositive metric $\phi$ assuming that the semipositive envelope $\Env(\phi)$ is continuous.

\subsection{Semipositive envelope} \label{subsection differentiability in the semipositve case}

Let $L$ be a line bundle on a proper scheme $X$ over $K$. We will introduce the semipositive envelope $\Env(\psi)$ of a bounded metric $\psi$ on $\Lan$. We always assume that $L$ has at least one semipositive model metric. This implies that $L$ is nef (see \ref{model metrics}) and holds at least  for semiample line bundles.

Recall from \ref{notation metric} that we use additive notation for metrics and hence $\phi \leq \psi$ for metrics on $\Lan$ means $\abs_\phi \geq \abs_\psi$ for the corresponding norms on fibers. 

\begin{defi} \label{psh envelope}
	The {\it semipositive envelope} $\Env(\psi)$ of a bounded metric $\psi$ on $\Lan$ is defined by
	$$
	\Env(\psi) := \sup \{ \phi \st \phi \text{ is a continuous semipositive metric on $\Lan$ and } \phi \leq \psi\}.
	$$ 	
\end{defi}

\begin{rem} \label{psh continuity and semipositive}
	Note that in the above definition, we may restrict our attention to semipositive model metrics of $L$ using that every semipositive continuous metric is a uniform limit of continuous semipositive metrics. It follows from the assumed existence of a semipositive model metric that $\Env(\psi)$ is a bounded metric on $L$.

	If $\Env(\psi)$ is a continuous metric on $\Lan$, then it follows from Dini's theorem that $\Env(\psi)$ is a continuous semipositive metric on $\Lan$. If $\psi$ is a continuous metric, continuity of $\Env(\psi)$ is not clear. This property is expected in case of semiample line bundles on a normal projective variety (see \cite[Conjecture 7.31]{BE}).

	We refer to \cite[\S 5.3, 5.4]{BJ18} and \cite[\S 7.5]{BE} for the study of the \emph{psh-envelope} which agrees with the semipositive envelope at least in the case of a continuous metric on an ample line bundle. For a continuous metric $\psi$ of $L$, the definition of $\Env(\psi)$ agrees with  \cite[Definition 2.5.1]{BGJKM} where  multiplicative notation for metrics was used.
\end{rem}


\begin{prop} \label{easy properties of envelope}
	Let $\psi, \psi_1, \psi_2$ be bounded metrics on $\Lan$. Then we have:
	\begin{itemize} 
		\item[(a)] $\Env(\psi) \leq \psi$ with equality if $\psi$ is a  continuous semipositive metric.
		\item[(b)] If $a \in \N$, then $\Env(a\psi)= a \Env(\psi)$.
		\item[(c)] If $\psi_1 \leq \psi_2$, then $\Env(\psi_1) \leq \Env(\psi_2)$.
		\item[(d)] If $c \in \R$, then $\Env(\psi + c) = \Env(\psi)+c$.
		\item[(e)] $\sup_\Xan|\Env(\psi_1)-\Env(\psi_2)|\leq \sup_{\Xan}|\psi_1-\psi_2|.$
	\end{itemize}	
\end{prop}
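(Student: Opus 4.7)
The proof of this proposition is largely a formal exercise in unwinding the definition of the envelope as a supremum, but care is needed in a couple of places to check that the class of continuous semipositive metrics on $L$ is preserved under the relevant operations.

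My plan is to handle the five properties in the order (a), (c), (d), (b), (e), since (e) will follow cleanly from (c) and (d), and (b) is the one requiring a small extra argument. For (a), the inequality $\Env(\psi)\le\psi$ is immediate from the definition, and if $\psi$ itself is continuous semipositive then it lies in the defining family, so it saturates the supremum. For (c), any continuous semipositive $\phi\le\psi_1\le\psi_2$ is also a candidate for computing $\Env(\psi_2)$, whence $\Env(\psi_1)\le\Env(\psi_2)$. For (d), I will use that adding a real constant $c$ to a metric on $L$ (i.e.\ tensoring with the trivial metric shifted by $c$) preserves the property of being continuous semipositive, since the $dd^c$ operator is insensitive to constants and, at the level of semipositive model metrics, a constant shift corresponds to adding a principal vertical divisor which does not affect nefness. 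Hence the bijection $\phi\leftrightarrow\phi+c$ restricts to a bijection on the defining families for $\Env(\psi)$ and $\Env(\psi+c)$, proving (d).

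For (e), set $\delta\coloneqq\sup_{\Xan}|\psi_1-\psi_2|<\infty$ (finite by boundedness). Then $\psi_1-\delta\le\psi_2\le\psi_1+\delta$, and combining (c) with (d) yields $\Env(\psi_1)-\delta\le\Env(\psi_2)\le\Env(\psi_1)+\delta$, which is the claim.

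The one step I expect to require some attention is (b). The easy direction $a\,\Env(\psi)\le\Env(a\psi)$ follows because multiplying a continuous semipositive metric on $L$ by $a\in\N$ produces a continuous semipositive metric on $aL$ bounded above by $a\psi$. For the reverse inequality, given a continuous semipositive metric $\phi'$ on $aL$ with $\phi'\le a\psi$, I want to say that $\phi'/a$ is a continuous semipositive metric on $L$ with $\phi'/a\le\psi$; the point is that a nef $\Q$-model $\Mcal$ of $aL$ yields the nef $\Q$-model $(1/a)\Mcal$ of $L$, so semipositive model metrics on $aL$ and on $L$ are in explicit bijection via scaling by $a$, and this bijection commutes with uniform limits. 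Thus $\phi'/a$ is a legitimate competitor in the supremum defining $\Env(\psi)$, giving $\phi'\le a\,\Env(\psi)$ and hence $\Env(a\psi)\le a\,\Env(\psi)$. This scaling compatibility (not any delicate analysis) is the only non-tautological ingredient in the proof.
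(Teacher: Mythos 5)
Your proof is correct and follows essentially the same route as the paper, which simply declares (a)--(d) obvious from the definition and derives (e) from the preceding parts; you supply the (straightforward) details, including the scaling-of-models argument for (b), all of which are accurate. A minor observation: you correctly note that (e) only needs (c) and (d), whereas the paper loosely cites ``(b)--(d)''.
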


\begin{proof}
	Properties (a)--(d) are obvious from the definition and (e) follows from (b)--(d).	
\end{proof}

We generalize now \cite[Proposition 6.2.1]{BGJKM} and \cite[Proposition 9.11(vi)]{BE}.

\begin{prop} \label{sup norm and envelope}
	Let $\psi$ be bounded metric on $L^{\an}$.
	\begin{itemize}
		\item[(a)]
		We have $\metr_{\Env(\psi)}= \metr_\psi$ for the corresponding supremum seminorms on $H^0(X,L)$. 
		\item[(b)]
		If $X$ is reduced, then we get  $\vol(L,\phi, \psi) = \vol (L, \Env(\phi), \Env(\psi) )$ 
		for any bounded metric  $\phi$ on $\Lan$.
	\end{itemize}
\end{prop}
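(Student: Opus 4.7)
The inequality $\|\cdot\|_{\Env(\psi)} \geq \|\cdot\|_\psi$ in part~(a) is immediate from $\Env(\psi) \leq \psi$ (Proposition~\ref{easy properties of envelope}(a)), which gives $|s|_{\Env(\psi)} \geq |s|_\psi$ pointwise.  For the reverse inequality, I would fix a nonzero $s \in H^0(X,L)$, set $c := \|s\|_\psi$, and show that $|s(x)|_{\Env(\psi)} \leq c$ for every $x \in \Xan$.  The natural candidate metric is the tautological singular semipositive metric $\phi_s - \log c$, where $\phi_s = \log|\gamma|$ in a local trivialization $s = \gamma \cdot e$: by construction $|s|_{\phi_s - \log c} \equiv c$ off $\div(s)$, and the equation $\|s\|_\psi = c$ translates into $\phi_s - \log c \leq \psi$.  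If $\phi_s - \log c$ were a continuous semipositive metric $\leq \psi$, the definition of $\Env(\psi)$ would immediately yield $\Env(\psi) \geq \phi_s - \log c$ and hence $|s|_{\Env(\psi)} \leq c$ away from $\div(s)$; on $\div(s)$ the bound is trivial.

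To handle the singularity of $\phi_s$, I would regularize using a fixed continuous semipositive metric $\phi_0 \leq \psi$ (which exists by the standing assumption on $L$), setting
$$
\eta_k := \max(\phi_s - \log c,\; \phi_0 - k) \qquad (k \in \N).
$$
Near $\div(s)$ the first entry plunges to $-\infty$ while $\phi_0 - k$ stays bounded, so $\eta_k$ coincides with $\phi_0 - k$ on a neighborhood of $\div(s)$ and is continuous on $\Xan$; moreover $\eta_k \leq \psi$ by construction.  The crucial step is semipositivity of $\eta_k$, which I would establish by approximating $\phi_s$ uniformly on every compact subset of $\Xan \setminus \div(s)$ by continuous semipositive model metrics obtained through blow-ups of the ideal sheaf $(s)$ on $\Ko$-models of $X$: on such a blow-up, the pullback $\pi^*s$ factors as $s_E \cdot t$ with $s_E$ cutting out the exceptional divisor and $t$ a regular nowhere-vanishing section of an appropriate twist of $\pi^*\Lcal$, giving a model metric that matches $\phi_s$ on the generic fiber.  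Combined with the stability of the max of two continuous semipositive metrics (\cite[Propositions~3.11,~3.12]{gubler-martin}), this shows that $\eta_k$ is continuous semipositive and hence admissible in the family defining $\Env(\psi)$.  As $k \to \infty$, $\eta_k \nearrow \phi_s - \log c$ off $\div(s)$, so $\Env(\psi) \geq \phi_s - \log c$ there, giving $|s|_{\Env(\psi)} \leq c$ everywhere.

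Part~(b) then follows formally from~(a).  Applying (a) to the bounded metrics $m\phi$ and $m\psi$ and using $\Env(m\phi) = m\Env(\phi)$ from Proposition~\ref{easy properties of envelope}(b), we obtain $\|\cdot\|_{m\phi} = \|\cdot\|_{m\Env(\phi)}$ and $\|\cdot\|_{m\psi} = \|\cdot\|_{m\Env(\psi)}$ on $H^0(X,mL)$.  Since the relative volume depends only on these two sup-norms, the definition~\eqref{non-Archimedean volume} of the non-Archimedean volume immediately yields $\vol(L,\phi,\psi) = \vol(L,\Env(\phi),\Env(\psi))$.  The main obstacle in the argument is thus the semipositivity of the regularization $\eta_k$, which reduces to the approximation of the tautological singular metric $\phi_s$ by continuous semipositive model metrics on $\Xan \setminus \div(s)$; this is the non-Archimedean analogue of Demailly regularization and constitutes the technical heart of the proof.
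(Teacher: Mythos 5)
Your overall strategy for part~(a) matches the paper's: both start from the tautological singular metric determined by the section $s$ (your $\phi_s - \log c$; the paper's $\psi_s := \psi - \psi \circ s$ after rescaling so that $\|s\|_\psi = 1$), regularize it by taking the maximum with a continuous semipositive metric lying below $\psi$, and then feed the resulting competitor into the definition of $\Env(\psi)$ to deduce $|s|_{\Env(\psi)} \leq \|s\|_\psi$. Part~(b) is handled identically in both.

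The genuine gap is in the regularization step, which is the heart of the matter. The paper disposes of it by invoking Lemma~6.1.3 of \cite{BGJKM}, remarking that its proof works over any non-trivially valued non-Archimedean field because the key input from \cite{gubler-martin} does; that lemma asserts precisely that $\max(\psi_1,\psi_s)$ is a continuous semipositive metric whenever $\psi_1$ is continuous semipositive with $\psi_1 \leq \psi$. You instead attempt a direct proof by approximating $\phi_s$ via blow-ups of the ideal sheaf $(s)$ on models, claiming to obtain "a model metric that matches $\phi_s$ on the generic fiber." This cannot be correct when $s$ vanishes on $X$, which is the interesting case: there $\phi_s$ equals $-\infty$ along $\div_X(s)$ and so is not a model metric at all. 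Moreover, the line bundle $\pi^*\Lcal(-E)$ produced by such a blow-up restricts over the generic fiber to $L(-\div_X(s)) \cong \Ocal_X$, not to $L$, so the resulting model metric lives on the wrong line bundle. Producing semipositive model metrics on $L$ itself that approximate $\phi_s$ appropriately is exactly what the cited lemma accomplishes, and the argument is not as automatic as your sketch suggests. (One small slip, immaterial to the conclusion: $\eta_k$ \emph{decreases} to $\phi_s - \log c$ off $\div(s)$ rather than increasing, but this does not matter since a single admissible $\eta_k$ already yields $\Env(\psi) \geq \eta_k \geq \phi_s - \log c$.)
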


\begin{proof}
	By Proposition \ref{easy properties of envelope}(a), we deduce $\metr_{\Env(\psi)} \geq \metr_\psi$. We prove the converse inequality by contradiction. Assume that  there is $s \in H^0(X,L)$ and $x \in \Xan$ with $|s(x)|_{\Env(\psi)} > ||s||_\psi$. Rescaling the metric $\psi$ and using Proposition \ref{easy properties of envelope}(d), we may assume
	\begin{equation} \label{rescaled assumptions}
	|s(x)|_{\Env(\psi)} >1 = ||s||_\psi.
	\end{equation}
	This yields $g \coloneqq \psi \circ s \geq 0$ and hence we get a singular metric $\psi_{s} \coloneqq \psi - g \leq  \psi$ on $\Lan$. We note that the results of \cite[\S 6.1]{BGJKM} hold over any (non-trivially valued) non-Archimedean field, as the crucial reference \cite{gubler-martin} works in this setting. For a continuous semipositive metric $\psi_1 \leq \psi$ on $\Lan$, it follows from \cite[Lemma 6.1.3]{BGJKM} that $\psi' \coloneqq \max(\psi_1, \psi_{s})$ is a continuous semipositive metric on $\Lan$ with 
	$\psi_1 \leq \psi' \leq \psi$
	and hence 
	\begin{equation}  \label{lower bound by envelope}
	\psi' \leq \Env(\psi)
	\end{equation}
	by definition of the semipositive envelope.  By construction, we have $\psi_s \circ s \equiv  0$ and hence \eqref{lower bound by envelope} yields
	$$\Env(\psi) \circ s  \geq \psi' \circ s = \max(\psi_1\circ s, \psi_{s}\circ s) \geq 0$$
	which contradicts the strict inequality in \eqref{rescaled assumptions}. This proves $\metr_{\Env(\psi)}= \metr_\psi$. 
	
	Finally, (b) follows from the definition of the non-Archimedean volume and by applying (a) to the bounded metrics $m\psi$ and to $m\phi$ for any $m \in \N$. 
\end{proof}

The following generalizes \cite[Corollary 6.2.2]{BGJKM} and \cite[Corollary 9.16]{BE}.

\begin{cor} \label{lemma volume energy}
	Let $L$ be a line bundle over a geometrically reduced proper scheme $X$ over $K$. Assume that $\psi, \phi$ are continuous  metrics on  $\Lan$ and assume that $P(\psi),P(\phi)$ are also continuous metrics on $\Lan$. Then we have
	$$\vol(L,\phi,\psi) = \en(L,\Env(\phi),\Env(\psi)).$$
\end{cor}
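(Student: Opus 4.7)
The claim is essentially a direct concatenation of two results already established in the paper, so my plan is to reduce it in two steps to Theorem~\ref{cor3 energy}. The first step is to invoke Proposition~\ref{sup norm and envelope}(b), which asserts (under the standing hypothesis that $X$ is reduced, which follows from geometric reducedness) that the non-Archimedean volume is unchanged by replacing each of the two bounded metrics with its semipositive envelope:
\begin{equation*}
\vol(L,\phi,\psi) = \vol(L,\Env(\phi),\Env(\psi)).
\end{equation*}
This uses only that $\phi$ and $\psi$ are bounded, which is automatic for continuous metrics on the compact Berkovich space $\Xan$ (\emph{cf.}~\ref{bounded metrics}).

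The second step is to verify that the two envelopes on the right are continuous semipositive metrics, so that Theorem~\ref{cor3 energy} applies. By assumption $\Env(\phi)$ and $\Env(\psi)$ are continuous, and the hypothesis itself implicitly requires that at least one semipositive model metric on $L$ exists (otherwise the envelopes would not be bounded). Since $\Env(\psi)$ is by construction a supremum of continuous semipositive metrics bounded above by the continuous $\psi$, Dini's theorem turns this sup into a uniform limit, as recorded in Remark~\ref{psh continuity and semipositive}; hence $\Env(\psi)$ is a continuous semipositive metric on $\Lan$, and likewise for $\Env(\phi)$.

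Having established both points, Theorem~\ref{cor3 energy} (the identity $\vol(L,\phi_1,\phi_2) = \en(L,\phi_1,\phi_2)$ for continuous semipositive metrics, which requires $X$ to be geometrically reduced and proper) applied to the pair $(\Env(\phi),\Env(\psi))$ gives
\begin{equation*}
\vol(L,\Env(\phi),\Env(\psi)) = \en(L,\Env(\phi),\Env(\psi)),
\end{equation*}
and combining with the first displayed equation yields the claim. There is no real obstacle here: all the substantive work has been done in Proposition~\ref{sup norm and envelope} (where the nontrivial envelope-versus-sup-norm identity is proved) and in Theorem~\ref{cor3 energy} (the volume-equals-energy theorem for nef line bundles, which in turn rests on the asymptotic formula of Section~\ref{section formula volume energy}). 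The corollary itself is a clean packaging of these two facts once continuity of the envelopes is granted.
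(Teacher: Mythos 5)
Your proof is correct and follows exactly the route of the paper, which states the corollary as a combination of Proposition~\ref{sup norm and envelope} and Theorem~\ref{cor3 energy}. You merely make explicit the intermediate point (via Remark~\ref{psh continuity and semipositive} and Dini's theorem) that the continuous envelopes are in fact continuous \emph{semipositive} metrics, which is needed for Theorem~\ref{cor3 energy} to apply.
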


\begin{proof}
	This follows by combining Proposition \ref{sup norm and envelope} and Theorem \ref{cor3 energy}.	
\end{proof}

\subsection{Orthogonality}  In this subsection, we consider a line bundle $L$ on a proper scheme $X$ of dimension $n$ over $K$. 
\label{subsection orthogonality}

\begin{defi}
	Let  $\phi$ be a continuous metric on $\Lan$  with $\Env(\phi)$ a continuous metric on $\Lan$ as well. We say the $\phi$ satisfies the {\it orthogonality property} if 
	\begin{equation} \label{orthogonality property}
	\int_\Xan (\Env(\phi) - \phi) \,(dd^c\Env(\phi))^n = 0.
	\end{equation}
\end{defi}

\begin{theo}
	\label{lemma orthogonality}
	We assume that $X$ is geometrically reduced. Let  $\phi$ be a continuous metric on $L^{\an}$ such that $\Env(\phi)$ is also a continuous metric. Then $\phi$ satisfies the orthogonality property.
\end{theo}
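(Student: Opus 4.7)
The plan is to reduce the orthogonality identity to a statement about the derivative of a non-Archimedean volume, using Theorem~\ref{corollary differentiablity yuan's argument} together with the ``sup-seminorm invariance'' under the envelope operation given in Proposition~\ref{sup norm and envelope}.

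Set $\psi \coloneqq \Env(\phi)$ and $f \coloneqq \phi - \psi$, viewed as a continuous function on $\Xan$. By Proposition~\ref{easy properties of envelope}(a) we have $\psi \leq \phi$, so $f \geq 0$, and by hypothesis $\psi$ is continuous; moreover Dini's theorem (as recorded in Remark~\ref{psh continuity and semipositive}) upgrades $\psi$ to a continuous \emph{semipositive} metric on $\Lan$. In particular $L$ is nef, and the differentiability theorem applies at the base point $\psi$ in the direction $f$.

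Consider the function
\begin{equation*}
G(t) \coloneqq \vol(L,\psi + tf,\psi), \qquad t \in [0,1].
\end{equation*}
Clearly $G(0)=0$ by Proposition~\ref{basic properties of non-archimedean volume}(a). At $t=1$ we have $\psi + f = \phi$, so $G(1) = \vol(L,\phi,\Env(\phi))$. Now Proposition~\ref{sup norm and envelope}(a), applied to $m\phi$ for every $m \in \N$ (using $\Env(m\phi) = m\Env(\phi)$ from Proposition~\ref{easy properties of envelope}(b)), gives $\metr_{m\phi} = \metr_{m\Env(\phi)}$, and hence the relative volume of determinant norms vanishes at every level $m$. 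Taking the limit in the definition of the non-Archimedean volume, $\vol(L,\phi,\Env(\phi)) = 0$, so $G(1)=0$.

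Since $f \geq 0$, the first argument $\psi + tf$ is non-decreasing in $t$, and Proposition~\ref{basic properties of non-archimedean volume}(b) implies that $G$ is itself non-decreasing on $[0,1]$. Combined with $G(0) = G(1) = 0$, this forces $G \equiv 0$ on $[0,1]$, and in particular $G'(0) = 0$. On the other hand, Theorem~\ref{corollary differentiablity yuan's argument} yields
\begin{equation*}
G'(0) = \int_{\Xan} f \,(dd^c\psi)^n = \int_{\Xan} (\phi - \Env(\phi))\,(dd^c\Env(\phi))^n.
\end{equation*}
Combining these two computations gives the orthogonality identity $\int_{\Xan}(\Env(\phi)-\phi)\,(dd^c\Env(\phi))^n = 0$. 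The only non-routine input is the differentiability theorem; once it is in hand, the ``sandwich'' $G(0)=G(1)=0$ with monotonicity is the main conceptual step, and no further work is needed.
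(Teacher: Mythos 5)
Your argument is correct and is essentially the same one the paper invokes (via the reference to the proof of BGJKM, Theorem~6.3.2): combine the differentiability Theorem~\ref{corollary differentiablity yuan's argument} at the semipositive metric $\Env(\phi)$ with the sup-norm invariance of Proposition~\ref{sup norm and envelope} to trap $t\mapsto\vol(L,\Env(\phi)+t(\phi-\Env(\phi)),\Env(\phi))$ between its vanishing endpoints by monotonicity. No gaps; this matches the paper's intended proof.
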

\begin{proof}
	This follows from Theorem \ref{corollary differentiablity yuan's argument} and Proposition \ref{sup norm and envelope} by the same arguments	as in the proof of \cite[Theorem~6.3.2]{BGJKM}.
\end{proof}

\begin{rem} \label{differentiability of energy}
	Assume that the semipositive envelope of any continuous metric $\phi$ on $L$ is continuous.  Fixing a continuous reference metric $\psi$ of $L$,  orthogonality for all  continuous metrics $\phi$ of $L$ is equivalent to differentiability of $\phi \mapsto E(L,\Env(\phi),\Env(\psi))$ for all continuous metrics $\phi$ of $L$,
	see \cite[Lemma 3.5]{BGM} for the argument.  
	 This differentiability is a crucial property in the proof of the existence of solutions of non-Archimedean Monge--Amp\`ere equations, see \cite{BFJ2}.	
\end{rem}

\appendix

\renewcommand\thesection{\Alph{section}}
\setcounter{section}{0}

\section*{Appendix A. Asymptotical functions} \label{appendix on asymptotical functions}
\addcontentsline{toc}{section}{Appendix A. Asymptotical functions}\refstepcounter{section}
In what follows, $P$ denotes an arbitrary abelian group, and we set as usual $P_\R \coloneqq P \otimes_\Z \R$. We use the norm $|x| \coloneqq \sum_i |x_i|$ for $x \in \R^r$.

The goal of this appendix is to establish the following elementary result. 
\begin{theo} \label{thm:hh} Let $h:P\to\R$,  $s \in \R_{>0}$, and assume that for all $L_0,\dots,L_r\in P$ and $m=(m_1,\dots,m_r)\in\Z^r$ we have 
	\begin{equation} \label{asymptotic growth condition}
	h(m_1L_1+ \dots + m_r L_r)= O(|m|^s)  
	\end{equation}
	and 
	\begin{equation} \label{asymptotic continuity condition}
	h(L_0+m_1L_1+ \dots + m_r L_r)- 
	h(m_1L_1+ \dots + m_r L_r)= O(|m|^{s-1}). 
	\end{equation}
	Then there is  a unique function $\hh\colon P_\R\to\R$ such that: 
	\begin{itemize}
		\item[(i)] for any $L \in P$ we have 
		\begin{equation}\label{equ:hh}\hh(L)=\limsup_{m\to+\infty} m^{-s}h(mL); 
		\end{equation}
		\item[(ii)] $\hh$ is homogeneous of degree $s$, \textit{i.e.} $\hh(tM)=t^s\hh(M)$ for $t \in \R_+$ and $M \in P_\R$; 
		\item[(iii)] $\hh$ is continuous on any finite dimensional real subspace of $P_\R$.
	\end{itemize}
\end{theo}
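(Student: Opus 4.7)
The strategy is to build $\hh$ in three stages: define it on $P$ by the $\limsup$ formula (\ref{equ:hh}), extend by positive homogeneity of degree $s$ to rational combinations $\sum x_iL_i$ with $L_i\in P$ and $x\in\Q^r$, and finally extend by continuity to all of $P_\R$. The quantitative linchpin will be a Lipschitz-type bound extracted from (\ref{asymptotic continuity condition}) by telescoping.

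For $L\in P$, I set $\hh(L):=\limsup_m m^{-s}h(mL)$, which is finite by (\ref{asymptotic growth condition}). Writing $n=mk+r$ with $0\le r<k$ and using (\ref{asymptotic continuity condition}) with $L_0=rL$ and $L_1=kL$ gives $h(nL)=h(mkL)+O(m^{s-1})$; dividing by $n^s$ then yields $\hh(kL)=k^s\hh(L)$ for every $k\in\N_{>0}$. Iterating (\ref{asymptotic continuity condition}) one coordinate at a time, by moving from $y$ to $x$ in $|x-y|$ unit steps of $\pm e_j$, I obtain, for fixed $L_1,\dots,L_r\in P$ and $x,y\in\Z^r$,
\[
\Bigl|h\Bigl(\sum_i x_iL_i\Bigr)-h\Bigl(\sum_i y_iL_i\Bigr)\Bigr|\le C_{\mathbf L}\,|x-y|\,\max(|x|,|y|)^{s-1},
\]
each unit step costing $O(\max(|x|,|y|)^{s-1})$. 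Normalising by $m^s$ and passing to $\limsup$'s along $mx$ and $my$, the same Lipschitz estimate is inherited by the function $x\mapsto\hh(\sum_i x_iL_i)$ on $\Z^r$. Combined with the degree-$s$ homogeneity already established, this forces a unique continuous homogeneous extension $\hh_{\mathbf L}\colon\R^r\to\R$: one first passes to $\Q^r$ using homogeneity, and then to $\R^r$ using the Lipschitz bound.

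The main obstacle is gluing these $\hh_{\mathbf L}$'s into a single function $\hh\colon P_\R\to\R$. Given two tuples $\mathbf L,\mathbf L'$ in $P$ whose real spans both contain a fixed $M$, I concatenate them into a larger tuple $\mathbf F$ of size $N$ and study the kernel $K$ of the surjection $\R^N\twoheadrightarrow\mathrm{span}_\R(\mathbf F)\subset P_\R$, which is cut out by $\Q$-linear equations. The decisive auxiliary fact is that $\hh$ kills torsion: if $T\in P$ is torsion, then $mT$ takes only finitely many values, so (\ref{asymptotic continuity condition}) gives $h(mX+mT)-h(mX)=O(m^{s-1})$ and hence $\hh(X+T)=\hh(X)$ for every $X\in P$. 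This makes $\hh_{\mathbf F}$ invariant under $K\cap\Z^N$; homogeneity upgrades the invariance to $K\cap\Q^N$, and continuity of $\hh_{\mathbf F}$ together with density of $K\cap\Q^N$ in $K$ extends it to all of $K$. Hence $\hh_{\mathbf F}$ descends to $\mathrm{span}_\R(\mathbf F)$ and agrees with $\hh_{\mathbf L}$ and $\hh_{\mathbf L'}$ at $M$, giving a well-defined $\hh$ on $P_\R$. Properties (i)--(iii) are then built in by construction, and uniqueness is formal: (i) pins $\hh$ down on $\Z\cdot L$, (ii) on $\Q$-spans of elements of $P$, and (iii) on any finite-dimensional $V\subset P_\R$, since each such $V$ sits inside $\mathrm{span}_\R(L_1,\dots,L_r)$ for some $L_i\in P$.
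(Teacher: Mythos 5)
Your proof is correct, but it organizes the argument quite differently from the paper. The paper opens with the reduction \textquotedblleft without loss of generality, $P$ is finitely generated\textquotedblright\ (justified because $P_\R$ is the directed union of $P'_\R$ over finitely generated subgroups $P'\subset P$, and the uniqueness clause then forces the various $\hh_{P'}$ to patch); it then works on the single finite-dimensional $\Q$-vector space $P_\Q$, establishes $\N$-homogeneity of $\tilde h(L)=\limsup m^{-s}h(mL)$, and extends to $P_\R$ via the Lipschitz bound of Lemma~\ref{lem:perturbation}. You avoid that reduction and instead build functions $\hh_{\mathbf L}$ on each $\R^r$ and glue them, showing $\hh_{\mathbf F}$ is invariant under translation by the kernel $K$ of $\R^N\to P_\R$, in three stages ($K\cap\Z^N$, then $K\cap\Q^N$ by homogeneity, then $K$ by continuity, using that $K$ is rationally defined). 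Both routes rest on the same Lipschitz estimate, and both are valid; the paper's version is shorter because the WLOG reduction collapses your gluing step into a remark about uniqueness. One small simplification worth noting: your torsion-invariance lemma (that $\hh$ is unchanged by adding a torsion element $T$) does not actually require the continuity hypothesis~\eqref{asymptotic continuity condition} --- if $mT=0$, then $m(X+T)=mX$, so $\tilde h(X+T)=m^{-s}\tilde h(mX)=\tilde h(X)$ follows already from $\N$-homogeneity. This well-definedness check is genuinely needed (since $P\to P_\Q$ is not injective in general), and the paper passes over it silently; making it explicit, as you did, is a point in your favor.
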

To simplify the notation, we slightly abusively denote by $L\in P_\R$ the image of $L\in P$. 
The above abstract setting is inspired by the following example.
\begin{example} \label{higher cohomological functions}
	Let $Y$ be a projective scheme over a field $k$ of dimension $n$. For $q \in \{0, \dots , n\}$, the function $L\mapsto h^q(Y,L)$ on $\Pic(Y)$ satisfies~\eqref{asymptotic growth condition} and~\eqref{asymptotic continuity condition} with $s=n$. Up to a factor $n!$, the induced function $\hh$ is then K\"uronya's higher cohomological function $\hh^q:\Pic(Y)_\R\to\R_{\ge 0}$, which coincides with the volume of line bundles for $q=0$. We refer to~\cite[\S 3.4]{BGJKM} for details. 
\end{example}

In this paper, we will apply the appendix to the following setting.
\begin{example} \label{higher cohomological functions+}
	Let $Y$ be an $n$-dimensional finitely presented projective torsion scheme  over $\Ko$ for a non-Archimedean field $K$. For $q \in \{0, \dots , n\}$, we define $h^q(Y,L)\in\R_{\ge 0}$ as the content of the torsion module $H^q(Y,L)$, \emph{cf.}~Definition~\ref{definition Euler characteristic}. It is shown in Lemma~\ref{lemma asymptotic easy} that this function satisfies~\eqref{asymptotic growth condition} and~\eqref{asymptotic continuity condition} with $s=n$, and therefore induces asymptotic cohomological functions $\hh^q\colon\Pic(Y)_\R\to\R_{\ge 0}$. As in Example~\ref{higher cohomological functions}, we will normalize $\hh^q$ in this case with a factor $n!$.
\end{example}

\begin{lemma} \label{lem:perturbation}
	For any $L_1, \dots, L_r\in P$, there exists $C>0$ such that 
	$$
	\left|h(m_1L_1+ \dots + m_rL_r)-h(m'_1L_1+\dots+m'_r L_r)\right|\le C|m-m'|\max\{|m|,|m'|\}^{s-1}
	$$
	for all $m,m'\in\Z^r$.  
\end{lemma}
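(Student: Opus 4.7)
The plan is to bound the difference by telescoping along a lattice path from $m'$ to $m$ in $\Z^r$, applying hypothesis~\eqref{asymptotic continuity condition} to each unit step.

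First, I would apply~\eqref{asymptotic continuity condition} to the given generators $L_1,\dots,L_r$ with $L_0$ ranging over the finite set $\{\pm L_1,\dots,\pm L_r\}$. Since this involves only finitely many choices, the resulting $O$-estimates combine into a single uniform bound: there exists $C_0>0$ such that
\begin{equation*}
\left|h\bigl(\varepsilon L_i + \textstyle\sum_j n_j L_j\bigr) - h\bigl(\textstyle\sum_j n_j L_j\bigr)\right| \le C_0 (1+|n|)^{s-1}
\end{equation*}
for all $i\in\{1,\dots,r\}$, $\varepsilon\in\{\pm 1\}$ and $n\in\Z^r$, enlarging $C_0$ if needed to absorb the finitely many values taken where $|n|$ is small.

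Next, I would connect $m'$ to $m$ by a lattice path $n^{(0)}=m',\,n^{(1)},\dots,\,n^{(N)}=m$ of length $N=|m-m'|$, obtained by adjusting each coordinate monotonically from $m'_i$ to $m_i$ (one coordinate at a time). This choice ensures $|n^{(j)}_i|\le\max\{|m_i|,|m'_i|\}$ for all $i$ and $j$---both when $m_i,m'_i$ share a sign and when they have opposite signs, in which case the coordinate passes through $0$---and hence $|n^{(j)}|\le|m|+|m'|\le 2\max\{|m|,|m'|\}$.

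Setting $F(n)\coloneqq h(\sum_i n_i L_i)$, the triangle inequality together with the single-step estimate yields
\begin{equation*}
|F(m)-F(m')| \le \sum_{j=0}^{N-1}|F(n^{(j+1)})-F(n^{(j)})| \le N\cdot C_0\bigl(1+2\max\{|m|,|m'|\}\bigr)^{s-1}.
\end{equation*}
A routine comparison of $(1+2R)^{s-1}$ with $R^{s-1}$ (treating the degenerate case $m=m'=0$ separately, where both sides vanish) then gives the stated bound. The one point requiring care is the uniformity of the single-step estimate: the $O$-hypothesis only gives control for $|n|$ large, but this is easily patched since the difference takes only finitely many values on any bounded region.
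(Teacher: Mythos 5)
Your proof is correct and follows essentially the same route as the paper: deduce a uniform single-step estimate from hypothesis~\eqref{asymptotic continuity condition} with $L_0\in\{\pm L_1,\dots,\pm L_r\}$, then telescope along a coordinate-by-coordinate lattice path from $m'$ to $m$ (the paper compresses this to ``an iterated application of this estimate yields the result''). You are slightly more careful than the paper in two places --- using $(1+|n|)^{s-1}$ rather than $|n|^{s-1}$ to patch the small-$|n|$ region not covered by the asymptotic $O$-bound, and treating the degenerate case $m=m'=0$ separately --- but these are minor refinements of the same argument, not a different approach.
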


\begin{proof}
	By \eqref{asymptotic continuity condition} we can find $C>0$ such that 
	$$
	\left| h(m_1L_1+ \dots + m_rL_r\pm L_i)-h(m_1 L_1+\dots+m_r L_r)\right|\le C|m|^{s-1}. 
	$$
	for all $m\in\Z^r$ and $i=1,\dots,r$. An iterated application of this estimate yields the result. 
\end{proof}

\begin{proof}[Proof of Theorem~\ref{thm:hh}] Without loss of generality, we may assume that $P$ is finitely generated. Uniqueness is then clear, since (i) and (ii) uniquely determine $\hh$ on $P_\Q$, which is dense in the finite dimensional vector space $P_\R$.

	In a first step we show that $\tilde h:P \to \R$ defined by 
	$$
	\tilde h(L)  \coloneqq \limsup_{m\to+\infty} m^{-s}h(mL)
	$$
	is $\N$-homogeneous of degree $s$, \textit{i.e.} 
	\begin{equation}\label{equ:homog}
	\tilde h(aL) = a^s \tilde h(L)
	\end{equation}
	for $a\in \N$ and $L\in P$. 
     Note first that $\tilde h(L)$ is real-valued by \eqref{asymptotic growth condition}.
	The case $a=0$ follows from $s>0$, and so we may assume $a\ge 1$. We obviously have
	$$
	\tilde h(L)  \ge \limsup_{m\to +\infty} (am)^{-s} h(amL)=a^{-s}\tilde h(aL)
	$$
	Conversely, pick a sequence $m_j\to+\infty$ such that $m_j^{-s}h(m_jL)\to\tilde h(L)$, and write $m_j=a q_j+r_j$ with $q_j\in\N$ and $r_j\in\{0,\dots,a-1\}$. Since $r_j$ takes only finitely many values, Lemma \ref{lem:perturbation} yields 
	$$
	h(m_jL)=h(aq_j L)+O(m_j^{s-1}), 
	$$
	and hence 
	$$
	a^s\tilde{h}(L) = a^s \lim_j m_j^{-s}h(m_jL)= \lim_j q_j^{-s}h(aq_jL)\leq \tilde{h}(aL),
	$$
	which proves~\eqref{equ:homog}. 
	
	This first step yields that there is a unique function $\hh:P_\Q\to\R$ which is $\Q_+$-homogeneous of degree $s$ and which satisfies~\eqref{equ:hh}. It remains to show that $\hh$ extends continuously to $P_\R$. 
	Pick $L_1,\dots,L_r\in P$. By Lemma~\ref{lem:perturbation}, we have 
	$$
	\left|h(m_1L_1+ \dots + m_rL_r)-h(m'_1L_1+\dots+m'_r L_r)\right|\le C|m-m'|\max\{|m|,|m'|\}^{s-1},
	$$
	for all $m,m'\in\Z^r$. By homogeneity, this yields
	$$
	\left|\hh(x_1L_1+ \dots + x_rL_r)-\hh(x'_1L_1+\dots+x'_r L_r)\right|\le C|x-x'|\max\{|x|,|x'|\}^{s-1}
	$$
	for $x,x'\in\Q^r$. As a result, $\hh:P_\Q\to\R$ is uniformly continuous on each bounded subset of $P_\Q$, and hence admits a unique continuous extension to $P_\R$. 
\end{proof}

\begin{prop}\label{prop:hh} In the setting of Theorem~\ref{thm:hh}, pick $L_1,\dots,L_r\in P$, and set for $x\in\R^r$ and $m \in \N$ 
	$$
	f_m(x):=m^{-s}h\left(\sum_i\lfloor m x_i\rfloor L_i\right).
	$$
	\begin{itemize}
		\item[(i)] For all $x\in\R^r$, we have $\hh(\sum_i x_i L_i)=\limsup_{m\to +\infty} f_m(x)$. 
		\item[(ii)] Assume given an open convex cone $\sigma \subset \R^r$ such that $\lim_{m\to +\infty} f_m(x)=\hh(x)$ for all $x\in \sigma\cap\Z^r$. Then $\lim_{m\to +\infty} f_m(x)=\hh(x)$ for all $x\in\overline{\sigma}$. 
	\end{itemize}
\end{prop}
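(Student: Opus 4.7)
The plan is to prove both parts by first establishing a locally Lipschitz estimate for $f_m$ in the variable $x$, then verifying the stated identities on rational (resp.\ integer) points, and finally extending by density using continuity of $\hh$.

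I will start with a Lipschitz-type control. Fix $L_1,\dots,L_r$ and $R>0$. For $x,x'\in\R^r$ with $|x|,|x'|\le R$, set $\alpha_i\coloneqq\lfloor mx_i\rfloor$ and $\beta_i\coloneqq\lfloor mx_i'\rfloor$. Then $|\alpha-\beta|\le m|x-x'|+r$ and $|\alpha|,|\beta|\le mR+r$, so applying Lemma~\ref{lem:perturbation} and dividing by $m^s$ yields
\[
|f_m(x)-f_m(x')|\le C_R\bigl(|x-x'|+O(1/m)\bigr)
\]
with a constant $C_R$ depending only on $R$ and the $L_i$. Consequently $x\mapsto\limsup_m f_m(x)$ is locally Lipschitz on $\R^r$.

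For part (i), I first treat $x'=a/b\in\Q^r$ with $a\in\Z^r$ and $b\in\Nbb_{>0}$. Writing $m=bq+\rho$ with $\rho\in\{0,\dots,b-1\}$ gives $\sum_i\lfloor mx_i'\rfloor L_i=qA+B_\rho$, where $A\coloneqq\sum_i a_iL_i$ and $B_\rho\coloneqq\sum_i\lfloor\rho a_i/b\rfloor L_i$ takes only finitely many values as $\rho$ varies. Lemma~\ref{lem:perturbation} applied to $B_\rho$ yields $h(qA+B_\rho)=h(qA)+O(q^{s-1})$ uniformly in $\rho$, hence $f_m(x')=m^{-s}h(qA)+O(1/m)$. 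Since $m/q\to b$ and $\tilde h(A)=\hh(A)$ by Theorem~\ref{thm:hh}(i), $\Q_+$-homogeneity gives $\limsup_m f_m(x')=b^{-s}\hh(A)=\hh(x')$. The Lipschitz bound from the previous paragraph, together with continuity of $\hh$ on $P_\R$ (Theorem~\ref{thm:hh}(iii)) and density of $\Q^r$ in $\R^r$, then extend this identity to all $x\in\R^r$, proving (i).

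For part (ii), observe that for $x'=a/b\in\sigma\cap\Q^r$ the point $a=bx'$ lies in $\sigma\cap\Z^r$ since $\sigma$ is a cone, so the hypothesis yields $\lim_q f_q(a)=\hh(a)$ as a genuine limit. Repeating the decomposition $m=bq+\rho$ from the previous paragraph then upgrades the limsup into a limit: $\lim_m f_m(x')=b^{-s}\hh(a)=\hh(x')$. Since $\sigma$ is open and convex, $\sigma\cap\Q^r$ is dense in $\overline\sigma$; combining this density with the Lipschitz estimate and the continuity of $\hh$ gives $\lim_m f_m(x)=\hh(x)$ for every $x\in\overline\sigma$. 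The main technical point is the uniform control of the floor-induced perturbations $B_\rho$ (which takes only finitely many values), and this is precisely what Lemma~\ref{lem:perturbation} supplies.
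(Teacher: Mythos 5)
Your proof is correct and follows essentially the same route as the paper's: the uniform Lipschitz-type estimate from Lemma~\ref{lem:perturbation}, the decomposition $m=bq+\rho$ at rational points, and extension to the closure by density plus continuity of $\hh$. The only stylistic difference is that in part (i) the paper establishes $\Z_{>0}$-homogeneity of $\limsup_m f_m$ on all of $\R^r$ (mirroring the argument for $\tilde h$) and then compares with $\hh$ on $\Z^r$, whereas you evaluate the limsup directly at rational points; both computations are the same floor-function perturbation argument in a slightly different wrapper.
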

\begin{proof} To prove (i), we set  $\hh(x):=\hh(\sum_i x_i L_i)$ and $f(x):=\limsup_{m \to\to +\infty} f_m(x)$ for $x\in\R^r$. We note that $f(x)=\hh(x)$ for $x\in\Z^r$, by~\eqref{equ:hh}. 
	
	By Lemma~\ref{lem:perturbation},  for all $m\in\Z_{>0}$ and  $x,x'\in \R^r$ with $|x|,|x'|\leq R$ we have a uniform estimate  
	\begin{equation}\label{equ:flip}
	\left|f_m(x)-f_m(x')\right|\le C |x-x'|\max\{|x|,|x'|\}^{s-1}+ O_R(m^{-1}). 
	\end{equation}
	This yields $|f(x)-f(x')|\le C|x-x'|\max\{|x|,|x'|\}^{s-1}$, which shows that $f$ is continuous on $\R^r$. Arguing just as for~\eqref{equ:homog}, we further have $f(ax)=a^s f(x)$ for all $a\in\Z_{>0}$ and $x\in\R^r$. It follows that $f=\hh$ on $\Q^r$, and hence also on $\R^r$, by continuity. This proves (i).
	
	To prove (ii), we show in a first step that 
	$$
	A:=\{x\in\R^r\mid \lim_{m \to \infty} f_m(x)=\hh(x)\}
	$$ 
	is a closed subset of $\R^r$. Let us pick a sequence $(x_n)_{n \in \N}$ in $A$ converging to $x \in \R^r$. For $\varepsilon > 0$, continuity of $\hh$ yields that 
	$|\hh(x_n)-\hh(x)|<\varepsilon/3$ for $n \gg 0$. We choose such an $n$ which also satisfies 
	$$C |x-x_n|\max\{|x|,|x_n|\}^{s-1}<\varepsilon/6.$$
	There is $m_0 \in \N$ such that the term ${O_R}(m^{-1})$ for $x'=x_n$ in \eqref{equ:flip} is bounded by $\varepsilon/6$ for all $m \geq m_0$ and all $n\in \N$. It follows from \eqref{equ:flip}  that
	$|f_m(x_n)-f_m(x)|< \varepsilon/3$ for $m \geq m_0$. Since $x_n\in A$, there is $m(n)\in \N_{\geq m_0}$ depending on $n$ such that $|f_m(x_n)-\hh(x_n)|<\varepsilon/3$ for all $m \geq m(n)$. Overall, the triangle inequality gives $|f_m(x)-\hh(x)|<\varepsilon$ for all $m \geq m(n)$. This proves $\lim_{m \to \infty} f_m(x)=\hh(x)$ and hence $x \in A$. We conclude   that $A$ is closed.
	
	By  assumption, $A$ contains $\sigma\cap\Z^r$. By the first step, to prove (ii) it will thus be enough to show that $A$ contains $\sigma\cap\Q^r$, which is dense in the closed convex cone $\overline{\sigma}$. Let $x\in \sigma\cap\Q^r$, and pick $a\in\Z_{>0}$ such that $ax\in\Z^r$. For $m\in\Z_{>0}$, write $m=aq_m+r_m$ with $q_m\in\N$ and $r_m\in\{0,\dots a-1\}$. Then $\lfloor m x_i\rfloor-q_m ax_i=\lfloor r_m x_i\rfloor$ remains bounded, and Lemma~\ref{lem:perturbation} thus yields a constant $C' >0$ depending on $|x|$ such that
	\begin{equation}\label{equ:hhom}
	\left|h(\sum_i\lfloor m x_i\rfloor L_i)-h(q_m \sum_i a x_i L_i)\right|\le C' m^{s-1}
	\end{equation}
	for all $m \in \Z_{>0}$. Since $ax\in \sigma\cap\Z^r$, we have by assumption $q_m^{-s} h(q_m \sum_i ax_i L_i)\to\hh(ax)=a^s\hh(x)$. Using~\eqref{equ:hhom}, we get $m^{-s} f_m(x)\to\hh(x)$, \textit{i.e.} $x\in A$, which concludes the proof of (ii).
\end{proof}


\newcommand{\etalchar}[1]{$^{#1}$}
\def\cprime{$'$}
\ifx\undefined\bysame
\newcommand{\bysame}{\leavevmode\hbox to3em{\hrulefill}\,}
\fi

\end{document}